\documentclass[12pt,twoside]{article}
\usepackage{fullpage}
\nonstopmode
\usepackage[latin1]{inputenc}
\usepackage[english]{babel}
\usepackage{clm-macros}
\usepackage{amsmath}
\usepackage{psfrag}
\usepackage{epsfig}
\usepackage{graphicx}
\usepackage{verbatim}
\usepackage{tikz}
\usepackage{enumerate}
\usepackage{datetime}

\usetikzlibrary{decorations}
\usetikzlibrary{decorations.pathmorphing}
\usetikzlibrary{arrows}
\tikzstyle{every node}=[circle, draw, fill=white,inner sep=0pt, minimum width=4pt]
\tikzstyle{nodelabel}=[rounded corners,fill=none,inner sep=5pt,draw=none]
\tikzstyle{matching} = [ultra thick]
\tikzset{snake/.style={decorate, decoration=snake}}

\newdateformat{UKvardate}{
\THEDAY\ \monthname[\THEMONTH], \THEYEAR}

\newcommand{\NB}{near-bipartite}
\newcommand{\PO}{pfaffian orientation}
\newcommand{\pema}{perfect matching}
\newcommand{\mc}{matching covered}
\newcommand{\mcg}{matching covered graph}
\newcommand{\bisub}{\mbox{bi-subdivision}}
\newcommand{\comi}{conformal minor}
\newcommand{\mami}{matching minor}
\newcommand{\Smi}{\mbox{$S$-minor}}

\newcommand{\pfaff}{pfaffian}
\newcommand{\nonpfaff}{\mbox{non-pfaffian}}
\newcommand{\JF}{\mbox{$J$-free}}
\newcommand{\JB}{\mbox{$J$-based}}
\newcommand{\PF}{\mbox{$\overline{C_6}$-free}}
\newcommand{\PB}{\mbox{$\overline{C_6}$-based}}
\newcommand{\KFF}{\mbox{$K_4$-free}}
\newcommand{\KFB}{\mbox{$K_4$-based}}
\newcommand{\KTTF}{\mbox{$K_{3,3}$-free}}
\newcommand{\KTTB}{\mbox{$K_{3,3}$-based}}
\newcommand{\TB}{\mbox{$\Theta$-based}}

\newcommand{\ed}{ear decomposition}

\newcommand{\KFdec}{\mbox{$K_4$-decoration}}

\newcommand{\claw}{$3$-claw}
\newcommand{\breaksta}{\hfill ~\linebreak }

\begin{document}

\title{Cubic graphs, {\Smi}s and {\comi}s}
\author{Nishad Kothari\thanks{Most of this work was done when the author was a postdoctoral researcher at the
University of Campinas, Brazil, and was supported by FAPESP Brazil \mbox{(2018/04679-1)}.}\\
\small IIT Madras, India\\[-0.8ex]
\small\tt nishadkothari@gmail.com\\
\and
Orlando Lee\\
\small University of Campinas, Brazil\\[-0.8ex]
\small\tt lee@ic.unicamp.br\\
\and
Cl{\'a}udio L. Lucchesi\\
\small University of S{\~a}o Paulo, Brazil\\[-0.8ex]
\small\tt lucchesi@ime.usp.br\\
\and
C{\^a}ndida Nunes da Silva\\
\small Federal University of S{\~a}o Carlos, Brazil\\[-0.8ex]
\small\tt candida@ufscar.br
}
\UKvardate
\date{\today}
  \maketitle 
  \thispagestyle{empty}

\begin{abstract} 

It is well-known that any class of simple graphs, say~$\mathcal{G}$, that is characterized by
finitely many forbidden minors, also admits a characterization by finitely many forbidden
topological minors; furthermore, the list of forbidden topological minors may be derived
from the list of forbidden minors.

We prove a similar result in Matching Theory.
Our Main Theorem states that any
class of {\mcg}s, say~$\mathcal{G}$, that is characterized by finitely many forbidden
{\Smi}s that are cubic, also admits a characterization by finitely many forbidden {\comi}s
that are cubic as well; once again, the list of forbidden {\comi}s may be derived
from the list of {\Smi}s.

In order to establish the above, we first prove that every \mcg\ has one of two graphs
as a conformal minor --- either $K_4$, or the $\Theta$ graph
(i.e., two vertices joined by three edges).
(In fact, we need and prove a much stronger statement.)
This is reminescent of a theorem due to Lov{\'a}sz: every nonbipartite \mcg\ has
one of two graphs as a conformal minor --- either $K_4$, or the triangular prism~$\overline{C_6}$.

As applications of our Main Theorem, we deduce known `forbidden \comi\ characterizations' of
\pfaff\ near-bipartite graphs, and of \pfaff\ solid graphs, using their respective
known `forbidden \Smi\ characterizations'.

\end{abstract}


\bigskip
For general  graph-theoretic notation  and terminology, we  follow the
book by Bondy and Murty~\cite{bomu08}.   All graphs considered in this
paper are loopless. Unless otherwise  stated, we allow multiple (i.e.,
parallel) edges.

\section{Main Story}

A connected graph, of order two or more, is {\em \mc}
if each edge lies in some perfect matching. For several problems
that pertain to the perfect matchings of a graph (such as counting
the number of perfect matchings, or the related problem
of deciding whether a graph admits a \pfaff\ orientation),
one may restrict attention to {\mcg}s without
entailing any loss of generality.

\smallskip
A vertex of a graph is {\it cubic} if it has degree precisely three.
A graph is {\em subcubic}
if each of its vertices has degree at most three,
and it is {\it cubic} if each of its vertices is cubic.
(Note that a cubic graph is also subcubic.)
Using Tutte's $1$-factor theorem, one may deduce that a cubic graph
is \mc\ if and only if it is $2$-connected. The smallest $2$-connected cubic
graph is the $\Theta$ graph (i.e., two vertices joined by three edges);
whereas the smallest nonbipartite $2$-connected cubic graphs
are $K_4$ and the triangular prism~$\overline{C_6}$.

\smallskip
Graph containment notions of {\em minors}
and {\em topological minors} play crucial roles in graph theory.
The latter is a stricter notion than the former --- in the sense that ---
if a graph $H$ is a topological minor of a graph~$G$
then $H$ is also a minor of~$G$; however, the converse is not true in general.
That being said, the two notions coincide whenever the ``smaller graph''
is subcubic. That is, a subcubic graph $H$ is a minor of a graph~$G$
if and only if $H$ is a topological minor of~$G$.
The following is a well-known theorem (see \cite[pg. 352, ex. 33]{dies05}).

\begin{thm}
\label{thm:minors-vs-topological-minors}
Any class of simple
graphs, say~$\mathcal{G}$,
that is characterized by finitely many forbidden minors,
also admits a characterization by finitely many forbidden topological minors.
\end{thm}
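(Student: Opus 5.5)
The plan is to convert each forbidden minor into the finite set of its ``minimal topological realizations.'' Let $\mathcal{G}$ be characterized by the finite list $H_1,\dots,H_k$ of forbidden minors. Since $\mathcal{G}$ is minor-closed, it is also closed under taking topological minors. It therefore suffices to produce a finite list $\mathcal{T}$ of graphs with the following property: for every graph $G$, $G$ has some $H_i$ as a minor if and only if $G$ has some member of $\mathcal{T}$ as a topological minor.

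For each $H=H_i$ I will take $\mathcal{T}(H)$ to be the set of all graphs $T$ with these properties. First, $T$ is obtained from $H$ by replacing each vertex $v$ with a tree $T_v$ whose leaves are in bijection with the edges of $H$ incident with $v$; each edge $uv$ of $H$ becomes an edge joining the corresponding leaves of $T_u$ and $T_v$. Second, every internal vertex of every $T_v$ has degree exactly three, except for one ``root'' vertex, which may have degree at most $3$. It is cleaner to allow every non-leaf vertex to have degree at least $3$ and then suppress degree-$2$ vertices. Since $\deg_H(v)$ is bounded, each $T_v$ has at most $\deg_H(v)$ leaves and no vertices of degree $2$, so there are only finitely many such trees up to isomorphism. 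Hence $\mathcal{T}(H)$ is finite. Moreover, each $T\in\mathcal{T}(H)$ has $H$ as a minor, obtained by contracting every tree $T_v$, and $T$ is simple whenever $H$ is.

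The two directions then go as follows. If $G$ contains some $T\in\mathcal{T}(H)$ as a topological minor, then $T$ is a minor of $G$, so $H$ is a minor of $G$. Conversely, suppose $H$ is a minor of $G$. Take a minimal model: connected branch sets $B_v\subseteq V(G)$, one edge $e_{uv}$ of $G$ between $B_u$ and $B_v$ for each $uv\in E(H)$, and vertex sets chosen minimal. Inside $B_v$, let the endpoints of the chosen edges incident with $B_v$ be the terminals, and take a minimal subtree spanning these terminals. This gives a Steiner tree whose leaves are terminals. If a terminal carries several chosen edges, I will first pendant-split it conceptually so that each chosen edge gets its own leaf. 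Suppressing degree-$2$ vertices then yields a tree whose leaves correspond bijectively to the edges at $v$ and whose internal vertices all have degree at least $3$. The subgraph of $G$ formed by these trees together with the edges $e_{uv}$ is therefore a subdivision of a graph $T'$ of exactly the shape above.

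The main obstacle is reconciling $T'$ with the definition of $\mathcal{T}(H)$. Two issues arise. The first is terminals that are not leaves, or that carry several edges; the pendant-splitting must be done so that the result is still a topological minor of $G$. The way to handle this is to allow tree vertices of arbitrary degree rather than insisting on degree exactly $3$; finiteness is unaffected. The second issue is that $T'$ may be a multigraph when $H$ is simple. This cannot happen, because distinct edges of $H$ use distinct edges of $G$. Finally, I will take $\mathcal{T}=\bigcup_i\mathcal{T}(H_i)$ and remove non-minimal members if desired. The list is derived effectively from the $H_i$, as the paper claims.
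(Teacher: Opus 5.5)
The paper gives no proof of this statement (it only cites an exercise in Diestel), so the question is simply whether your argument works. As written it does not, because $\mathcal{T}(H)$ is defined too restrictively.

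You require the leaves of $T_v$ to be in bijection with the edges at $v$, with every edge of $H$ ending at its own leaf. Then each $T\in\mathcal{T}(H)$ has more vertices than $H$ as soon as $H$ has a vertex of degree at least $2$. So $G=H$ has $H$ as a minor but contains no member of $\mathcal{T}(H)$ as a topological minor.

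Concretely, forests are exactly the graphs with no $K_3$ minor. For $v\in V(K_3)$, each $T_v$ is a tree with exactly two leaves, i.e.\ a path on at least two vertices. Hence every member of $\mathcal{T}(K_3)$ is a cycle of length at least $6$, and your list would admit $K_3$ itself.

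The step that fails is the claim that the Steiner trees together with the edges $e_{uv}$ form ``a subdivision of a graph $T'$ of exactly the shape above.'' Pendant-splitting a terminal places a new vertex on each chosen edge at that terminal, and no such vertex exists in $G$. Thus $T'$ is a subdivision of (a graph homeomorphic to) your subgraph of $G$, which is the opposite of what you need. Re-choosing the model cannot help: for $G=H=K_4$ every branch set is a single vertex carrying three chosen edges. Allowing internal tree vertices of arbitrary degree does not address this either, since the obstruction is the separate-leaf requirement, not the degree bound.

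The repair is to let the edges of $H$ at $v$ attach to arbitrary vertices of $T_v$, with several allowed at one vertex and internal vertices allowed. Require only that every vertex of degree at most $2$ in $T_v$ receive at least one attached edge, and take $T_v=K_1$ for isolated $v$.
\begin{itemize}
\item There are at most $\deg_H(v)$ attachment vertices.
\item Every other vertex has degree at least $3$ in $T_v$, so there are fewer of those than leaves, and the leaves are attachment vertices.
\item Hence $|V(T_v)|\le\max\{1,2\deg_H(v)\}$, so the family is finite.
\item Each member contracts to $H$, and each member is simple because $H$ is.
\end{itemize}
With this definition your minimal-model argument works verbatim and no splitting is needed. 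In each Steiner tree $S_v$, keep every terminal and every vertex of degree at least $3$, and suppress only the non-terminal vertices of degree $2$. The union of the $S_v$ with the edges $e_{uv}$ is then literally a subdivision of a member of $\mathcal{T}(H)$; for $G=H$, it is a subdivision of $H$ itself.
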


Furthermore, the list of forbidden topological minors may be derived from
the list of forbidden minors.

On the other hand, graph containment notions of {\em {\Smi}s} (defined in Section~\ref{sec:S-minors}),
{\em {\mami}s} (defined in \ref{sec:matching-minors}),
and {\em {\comi}s} (defined in \ref{sec:conformal-minors}) ---
listed here in increasing order of strictness --- play key roles in the theory of {\mcg}s,
and especially in the study of {\em {\PO}s} (defined in \ref{sec:applications}). The
latter two coincide whenever the ``minor'' is subcubic. That is,
a subcubic \mcg~$H$ is a \mami\ of a \mcg~$G$ if and only if $H$
is a \comi\ of~$G$. However, even in the restricted case when the ``minor'' is cubic,
both of these notions are stricter than that of {\Smi}s.

Interestingly,
in most known theorems characterizing various classes of {\mcg}s,
the ``forbidden minors'' happen to be cubic.
Our Main Theorem is a result similar to Theorem~\ref{thm:minors-vs-topological-minors}
for this restricted case.

\begin{thm}
\label{thm:conformal-minors-vs-S-minors-cubic}
{\sc [Main Theorem]}
Any class of {\mcg}s, say~$\mathcal{G}$, that is characterized by finitely many forbidden {\Smi}s that are cubic,
also admits a characterization by finitely many forbidden {\comi}s that are cubic as well.
\end{thm}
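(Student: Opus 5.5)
The plan is to reduce the Main Theorem to a statement about a single cubic graph. Suppose $\mathcal{G}$ is characterized by forbidding the cubic graphs $H_1,\dots,H_k$ as {\Smi}s. It suffices to show that, for each cubic \mcg~$H$, there is a finite list $\mathcal{L}(H)$ of cubic {\mcg}s with the following property: a \mcg~$G$ has $H$ as an \Smi\ if and only if $G$ has some member of $\mathcal{L}(H)$ as a \comi. The union $\mathcal{L}(H_1)\cup\dots\cup\mathcal{L}(H_k)$ is then the required finite list of forbidden cubic {\comi}s. It is also explicitly derivable from the $H_i$, which gives the ``furthermore'' part.

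The easy direction is to be arranged by the choice of $\mathcal{L}(H)$. I would take $\mathcal{L}(H)$ to be the set of cubic {\mcg}s $J$ that contain $H$ as an \Smi\ and are minimal with respect to the \comi\ order. Since {\comi}s compose, and a \comi\ of a \comi\ is a \comi, any $G$ with some $J\in\mathcal{L}(H)$ as a \comi\ inherits $H$ as an \Smi\ from~$J$. This requires checking that \Smi\ containment is preserved upward along the \comi\ order.

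For the converse, suppose $G$ has $H$ as an \Smi. Unwinding the definition of Section~\ref{sec:S-minors}, a \comi\ of $G$ is obtained from $H$ by locally ``blowing up'' each vertex (and possibly edges) into a piece attached by three edges, i.e.\ a $3$-cut side. The aim is to find, inside each such piece, a small cubic conformal substructure. This is where the strengthened form of the $K_4$/$\Theta$ theorem is used: every \mcg\ (here, each piece with its shore shrunk) has $K_4$ or $\Theta$ as a \comi, with the three attaching edges respected. Replacing each piece by the corresponding bounded gadget yields a cubic \comi\ $J$ of $G$, built from $H$ by substituting at each vertex one of boundedly many gadgets. This gives $|V(J)|\le c\,|V(H)|$ for an absolute constant~$c$, so the minimal members of $\mathcal{L}(H)$ have bounded size and $\mathcal{L}(H)$ is finite.

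The main obstacle is this last step. The strong $K_4$/$\Theta$ statement must produce a \comi\ of each piece that keeps the three boundary edges and interacts correctly with perfect matchings across the cuts, so that the local replacements glue together into a \emph{conformal} minor of $G$ rather than just a minor. I would first try to phrase the strengthened lemma relative to a prescribed tight or $3$-edge cut: every \mcg\ with three distinguished edges of a cut has a \comi\ isomorphic to a bisubdivision of $K_4$ or $\Theta$ using those three edges. I would then prove the gluing by induction on the number of pieces, using ear decompositions to verify conformality at each stage.
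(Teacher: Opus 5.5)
Your reduction to a single cubic $H$ is fine, as are the easy direction (a \comi\ is a \mami, hence an \Smi, and {\Smi}s compose) and the finiteness argument via minimal elements. The lemma you are reaching for is also the right key lemma: every claw of a \mcg\ lies in a conformal \bisub\ of $\Theta$ or $K_4$. It still needs a proof, for instance: a minimal counterexample has $E(G)=M_1\cup M_2\cup M_3$ with $e_i\in M_i$, hence is cubic with exactly three {\pema}s, hence is $\Theta$ or $K_4$.

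The first gap is in how you deploy this lemma. ``Unwinding the definition'' does not show that some \comi\ of $G$ is $H$ with each vertex blown up into a piece attached by three edges. An \Smi\ sequence interleaves ear deletions with contractions of shores of separating cuts. Once ears are put back, a vertex can have large degree, so a later splicing cut can be large, and its shore need not correspond to a vertex of $H$. There is therefore no static family of $3$-cuts to which a lemma about ``a prescribed tight or $3$-edge cut'' applies. What works is induction along the sequence, keeping the invariant that the current graph has a conformal \bisub\ $H_1$ of a member of a fixed finite family. Consider a step $G_{i-1}=G_i/(X\to x)$, where $\partial(X)$ is an arbitrary separating cut. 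Since $H_1$ bisubdivides a cubic graph, $x$ is either absent from $H_1$ or has degree $2$ or $3$ in it. If $x$ is absent, you extend through $G_i/\overline{X}$ by a {\pema}. If $x$ has degree $2$, you extend by a conformal cycle of $G_i/\overline{X}$ through the two edges of $H_1$ at $x$, which are adjacent at $\overline{x}$. Only in the degree-$3$ case is the claw lemma needed, applied to the claw of $H_1$ at $\overline{x}$.

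The second gap is your bound $|V(J)|\le c|V(H)|$, because gadgets nest. Suppose an earlier $K_4$-gadget at $v$ created triangle vertices $x,y,z$, a later splice occurs at $x$, and the claw at $\overline{x}$ extends to a $K_4$-\bisub\ $H_2$. Naive gluing then gives a triangle inside a triangle. Iterating this produces conformal minors of unbounded size, and nothing in your sketch rules it out.

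The missing idea is that $K_4(H)=\{H^T\odot K_4 : T\subseteq V(H)\}$ is closed under the gluing. In the bad case, let $y',z'$ be the branch vertices of $H_2$ reached through the two cut edges of $H_1$ at $x$ that lie on its paths to $y$ and $z$. Let $P_{yz}$ and $P_{y'z'}$ be the bisubdivided edges $yz$ of $H_1$ and $y'z'$ of $H_2$. Then $H_1\cup H_2-E(P_{yz})-E(P_{y'z'})$ is a conformal \bisub\ of $H^{T-v}\odot K_4$, with the fourth branch vertex of $H_2$ taking over the role of $v$. In the other cases:
\begin{itemize}
\item if $x$ is an original vertex, one gets $H^{T\cup\{x\}}\odot K_4$;
\item if $H_2$ bisubdivides $\Theta$, one gets $H^T\odot K_4$ back.
\end{itemize}
This gives the invariant ``$G_i$ is $K_4(H)$-based''. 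Hence the forbidden {\comi}s have at most $3|V(H)|$ vertices, and there is an explicit list of at most $2^{|V(H)|}$ of them per $H$. That also makes your minimal-element definition of $\mathcal{L}(H)$ unnecessary.
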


Our proof is constructive. In particular, one may derive the list
of forbidden {\em {\comi}s} (defined in Section~\ref{sec:conformal-minors})
from the list of forbidden {\em {\Smi}s}
(defined in Section~\ref{sec:S-minors}).

In the rest of this section, we provide the necessary background, and state
other results that lead to our proof of the Main Theorem (\ref{thm:conformal-minors-vs-S-minors-cubic}).

\subsection{Ear decompositions and {\comi}s}
\label{sec:conformal-minors}

A path is {\em odd} if it has an odd number of edges; otherwise, it is {\em even}.
As in the case of \mbox{$2$-connected} graphs, there is an \ed\ theory underlying {\mcg}s.

By a {\em removable single ear} of a \mcg~$G$, we mean an odd path, say~$P$, each of whose internal vertices is
of degree two (in~$G$) such that the graph $G-P$ is \mc. (Note that, unless $G$ is a cycle, the ends of $P$ have degree
three or more in~$G$.) It is well-known, and not too difficult to prove, that every bipartite \mcg,
except $K_2$, has a removable single ear. However, the same does not hold for nonbipartite {\mcg}s. For instance,
neither $K_4$ nor $\overline{C_6}$, has a removable single ear. (In order to deal with nonbipartite graphs,
we need the flexibility to simultaneously remove  ``two single ears''.)

By a {\em removable double ear} of a \mcg~$G$, we mean a pair of vertex-disjoint odd paths, say $(P,Q)$,
each of whose internal vertices is of degree two (in~$G$) such that: (i) neither $G-P$ nor $G-Q$ is \mc,
and (ii) $G-P-Q$ is \mc. For the sake of brevity, by a {\em removable ear},
we mean either a {\em removable single ear} or a {\em removable double ear}.
A fundamental result in matching theory (see \cite{lopl86}) states that every \mcg, except $K_2$,
has a removable ear.

To put it differently, given any \mcg~$G$, there exists a sequence $(G_1, G_2, \dots, G_r)$
of {\mc} subgraphs of~$G$ such that: (i) $G_1:=K_2$, (ii) $G_r := G$, and (iii) for each $2 \leq i \leq r$,
the graph $G_{i-1}$ is obtained from $G_i$ by deleting a removable ear.
We refer to such a sequence as an {\em \ed\ of the graph~$G$}.
We now proceed to state a further strengthening of this result.

A graph is {\em matchable} if it has a \pema,
and a subgraph~$H$ of a graph~$G$ is {\em conformal} if the graph $G-V(H)$ is matchable.
It is easily observed that every subgraph, in an \ed\ of a \mcg~$G$, is a conformal
subgraph of~$G$. The stronger version (mentioned above) says that, for any conformal \mc\ subgraph~$H$ of a
\mcg~$G$, there is an \ed, say $(G_1, G_2, \dots, G_r)$, of~$G$ such that $H$ appears
in this sequence. We now proceed to state a different strengthening applicable to nonbipartite
graphs that will immediately lead us to the notion of a {\em conformal minor}.

To {\em bi-subdivide} an edge means to subdivide it by
inserting an even number of subdivision vertices.
A graph~$H$ is a {\it \bisub} of a graph~$J$
if the former may be obtained from the latter by means of bi-subdividing
each edge in some subset of~$E(J)$. It is easily proved
that, for a \mcg~$J$ distinct from~$K_2$,
each \bisub\ of~$J$ is also \mc.
Lov{\'a}sz \cite{lova83} proved the following deep result.

\begin{thm}
\label{thm:nonbip-mcg-K4-prism-ear-dec}
{\sc [Lov{\'a}sz's Theorem]}
Every nonbipartite \mcg~$G$ admits an \ed, say~$(G_1, G_2, \dots, G_r)$,
such that either $G_3$ is a bi-subdivision of $K_4$, or otherwise $G_4$ is a bi-subdivision of~$\overline{C_6}$.
\end{thm}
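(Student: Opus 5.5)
We prove Lov{\'a}sz's Theorem in two stages. First we find a \bisub\ of $K_4$ or of $\overline{C_6}$ as a conformal subgraph of~$G$. Then we use the strengthened ear decomposition theorem quoted above: for any conformal \mc\ subgraph~$H$ of~$G$, some \ed\ of~$G$ passes through~$H$. It then remains to check that the initial segment of an \ed\ ending at~$H$ can be chosen with the stated indices. Any \bisub\ $H$ of $K_4$ is reached from $K_2$ by adding an even cycle through the $K_2$ edge, giving a \bisub\ of $\Theta$ ($G_2$), and then a double ear, giving $G_3=H$. For $\overline{C_6}$ one needs $G_2$ (even cycle), $G_3$ (a single ear, giving a bipartite \bisub\ of a $\Theta$-with-extra-edge, i.e. a \bisub\ of $K_{3,3}$ minus an edge), and $G_4=H$ via a double ear. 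The ear decomposition theorem guarantees only that $H$ appears somewhere in the sequence. So we also argue that any ear decomposition of $H$ itself (as a \mcg) starting at $K_2$ has exactly $2$ ears if $H$ is a \bisub\ of $K_4$, and $3$ if $H$ is a \bisub\ of $\overline{C_6}$. This holds because the number of ears is determined by $|E|-|V|$ and the number of double ears. Concatenating this with the remaining ears of the decomposition of $G$ through $H$ gives the claim.

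The main work is the existence of the conformal subgraph. Take any \ed\ $(G_1,\dots,G_r)$ of $G$ and let $G_k$ be the first nonbipartite member. Since $G_{k-1}$ is bipartite \mc\ and $G_k$ is not, the ear $R$ added at step~$k$ is a double ear. Each of its two paths has its ends in the same colour class of $G_{k-1}$; otherwise it could be added singly keeping bipartiteness and matching-coveredness. Call the two paths $P_1$ and $P_2$, with ends $u_1,v_1$ and $u_2,v_2$.

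We now work inside the bipartite \mcg\ $B=G_{k-1}$ and seek a conformal bi-subdivided subgraph. A natural choice is an even cycle $C$ of~$B$, conformal in~$B$, meeting $u_1,v_1,u_2,v_2$ appropriately, plus possibly one more path. By Menger-type arguments in bipartite \mcg s, which are $2$-connected with the required alternating structure, we want one of the following:
\begin{enumerate}[(a)]
\item a conformal cycle through all four vertices with the endpoints of $P_1$ and $P_2$ interlaced; adding $P_1,P_2$ then gives a \bisub\ of $K_4$;
\item a configuration of a conformal $\Theta$ \bisub\ with the four ends placed so that adding $P_1,P_2$ gives a \bisub\ of $\overline{C_6}$.
\end{enumerate}
Parity works out because $u_i,v_i$ lie in the same colour class, so each segment of $C$ between consecutive ends has the parity needed for subdivided edges to be even.

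The hard step is establishing this dichotomy, and in particular conformality of the chosen cycle or $\Theta$ in~$B$. Our first tool is the standard fact that in a bipartite \mcg\ any two edges lie on a common $M$-alternating cycle for a suitable perfect matching~$M$. We choose $M$ to be a perfect matching of $G_k$ using one edge of each $P_i$. Then $M$ restricted to~$B$ leaves $u_1,v_1,u_2,v_2$ exposed, and we study $M$-alternating paths in $B$ joining these exposed vertices. Since $B$ is bipartite and the exposed vertices have two in each colour class, there are two vertex-disjoint $M$-alternating paths pairing them across classes. Joining them with further alternating paths, and doing case analysis on how these paths intersect, yields either (a) or (b). The symmetric difference structure keeps the complement matchable, which gives conformality. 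We expect this case analysis to be the main obstacle.
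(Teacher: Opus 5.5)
The paper does not prove this statement: it quotes it from Lov{\'a}sz~\cite{lova83}, calling it a deep result. So your proposal has to stand on its own.

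Your first stage is fine. Once $G$ has a conformal bisubdivision $H$ of $K_4$ or $\overline{C_6}$, some ear decomposition of $G$ passes through $H$, and the position of $H$ is forced:
\begin{itemize}
\item the first ear added to $K_2$ is single;
\item a removable single ear added to a bipartite matching covered graph must join the two colour classes, so at least one double ear is needed;
\item $|E|-|V|$ rises from $-1$ by one per single ear and by two per double ear.
\end{itemize}
Reaching $2$ (for $K_4$) or $3$ (for $\overline{C_6}$) therefore forces exactly one double ear, hence $H=G_3$ or $H=G_4$. There is one small slip: in the $\overline{C_6}$ case $G_3$ is a bisubdivision of $\Theta$, not of $K_{3,3}-e$, since the latter has $|E|-|V|=2$. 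Your observations about the first nonbipartite $G_k$ are also correct: each path of the double ear has both ends in one colour class of $G_{k-1}$, with one path in each class.

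The genuine gap is the second stage, which is the entire content of Lov{\'a}sz's theorem. The dichotomy (a)/(b) is only asserted, and the case analysis that should produce it is never carried out. Moreover, (a)/(b) is not what the alternating-path argument naturally delivers.

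Set up the argument as follows. Restrict $M$ to $G_{k-1}$ and let $N$ be a perfect matching of $G_{k-1}$. Then $M\triangle N$ gives disjoint $M$-alternating paths $Q_1$ from $u_1$ to $u_2$ and $Q_2$ from $v_1$ to $v_2$, after relabelling. Matching-coveredness then gives an $M$-alternating connector $R$ from a vertex $r$ of $Q_1$ to a vertex $r'$ of $Q_2$, with end edges not in $M$ and interior disjoint from $Q_1\cup Q_2$. It likewise gives a similar connector $S$ from $s\in V(Q_2)$ to $s'\in V(Q_1)$.

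Suppose $R$ and $S$ are disjoint, $r$ precedes $s'$ on $Q_1$, and $s$ precedes $r'$ on $Q_2$.
\begin{itemize}
\item Then $Q_1\cup Q_2\cup R\cup S\cup P_1\cup P_2$ is already a conformal bisubdivision of $K_4$ with branch vertices $r,s,r',s'$; it is conformal because $M$ perfectly matches everything outside it.
\item However, $P_1$ sits inside the subdivided edge from $r$ to $s$, and $P_2$ inside the one from $s'$ to $r'$.
\item So $u_1,v_1,u_2,v_2$ have degree two in it unless $r=u_1$, $s=v_1$, $s'=u_2$ and $r'=v_2$.
\end{itemize}
This is not your configuration (a), and you give no rerouting argument to reach (a). You should not need one: your first stage accepts any conformal bisubdivision, so aim for that weaker target.

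Now suppose instead that $r'$ precedes $s$ on $Q_2$.
\begin{itemize}
\item The union is the cycle $r\,s'\,s\,r'$ in which the pairs $rr'$ and $ss'$ are each doubled by an even path, through $P_1$ and $P_2$ respectively.
\item This is a bisubdivision of neither $K_4$ nor $\overline{C_6}$.
\item One must then find further connectors and analyse how they meet $Q_1$, $Q_2$, $R$ and $S$ to extract $\overline{C_6}$ (or $K_4$).
\end{itemize}
One must also handle the case where $R$ and $S$ intersect. That analysis is precisely Lov{\'a}sz's proof, and it is missing. As it stands, the proposal establishes only the easy reduction.
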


A \mcg~$J$ is a {\it \comi} of a \mcg~$G$ if the
latter has a conformal subgraph~$H$ that is a \bisub\ of~$J$.
For the sake of convenience,
if a \mcg~$J$ is a \comi\ of a \mcg~$G$ then we say that $G$ is \JB;
otherwise, $G$ is \JF.
Lov{\'a}sz's Theorem (\ref{thm:nonbip-mcg-K4-prism-ear-dec}) may thus be restated as follows.

\begin{thm}
{\sc [Lov{\'a}sz's Theorem]}
\label{thm:nonbip-mcg-K4-prism-comi}
Every nonbipartite \mcg\ is either \KFB\ or \PB\ (or both).
\end{thm}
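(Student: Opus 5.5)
Theorem~\ref{thm:nonbip-mcg-K4-prism-comi} is a direct restatement of Theorem~\ref{thm:nonbip-mcg-K4-prism-ear-dec} in the language of conformal minors, so the plan is to derive it from that theorem and the observation that every member of an \ed\ of~$G$ is a conformal subgraph of~$G$.

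Let $G$ be a nonbipartite \mcg. By Theorem~\ref{thm:nonbip-mcg-K4-prism-ear-dec}, $G$ has an \ed\ $(G_1, G_2, \dots, G_r)$ in which either $G_3$ is a \bisub\ of~$K_4$, or $G_4$ is a \bisub\ of~$\overline{C_6}$. Let $H$ denote $G_3$ in the first case and $G_4$ in the second. It remains to check that $H$ is conformal in~$G$, that is, that $G-V(H)$ is matchable. Once this holds, $H$ is a conformal subgraph of~$G$ that is a \bisub\ of $K_4$ or of~$\overline{C_6}$. By definition, $G$ is then \KFB\ or \PB.

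For conformality, I argue by downward induction on~$i$ that $G - V(G_i)$ has a perfect matching. The case $i=r$ is trivial. For the inductive step, suppose $G_{i-1}$ is obtained from $G_i$ by deleting a removable ear, either a single ear $P$ or a double ear $(P,Q)$. Each such path is odd, so it has an even number of internal vertices. These internal vertices have degree two, and they are exactly the vertices of $G_i$ not in~$G_{i-1}$, because the ends of each path remain in~$G_{i-1}$. The internal vertices of an odd path span an even path, which has a perfect matching using alternate edges of the ear. Combining these matchings with a perfect matching of $G - V(G_i)$ gives a perfect matching of $G - V(G_{i-1})$.

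The only subtle point is this conformality step: one must check that the ends of the ears survive in $G_{i-1}$. This holds because $G_{i-1}$ is \mc\ and hence connected. In the degenerate situation where $G_i$ is an even cycle, the ends still have their remaining incident edges. Everything else is immediate, since the substance lies entirely in Theorem~\ref{thm:nonbip-mcg-K4-prism-ear-dec}.
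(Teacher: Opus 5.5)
Your proposal is correct and follows the paper's approach: the paper presents this theorem as a direct restatement of Theorem~\ref{thm:nonbip-mcg-K4-prism-ear-dec}, using the observation that every member of an \ed\ of~$G$ is a conformal subgraph of~$G$, and your downward induction just supplies the details of that observation. One small wording slip: the internal vertices of an odd path span a path with an even number of vertices, which is an \emph{odd} path in the paper's terminology, not an even one; it is still perfectly matchable by alternate edges, so the argument is unaffected.
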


In Section~\ref{sec:theta-K4}, we will prove the following result that is reminescent of Lov{\'a}sz's Theorem, but is
easier to prove and applies to all {\mcg}s. (In fact, we will prove something much stronger
that will help us in proving our Main Theorem.)

\begin{thm}
\label{thm:mcg-theta-K4-comi}
{\sc [$\Theta$ - $K_4$ Theorem]}
Every \mcg\ is either \TB\ or \KFB\ (or both).
\end{thm}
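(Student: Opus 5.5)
We would split according to whether $G$ is bipartite, and use Lov{\'a}sz's Theorem (\ref{thm:nonbip-mcg-K4-prism-comi}) for the nonbipartite case. One caveat must be settled first. $K_2$ and the even cycles are \mc\ but have no vertex of degree three or more, so they cannot contain a \bisub\ of $\Theta$ or of $K_4$. The statement therefore has to be read for {\mcg}s that are neither $K_2$ nor an even cycle, equivalently {\mcg}s with a vertex of degree at least three. We adopt this reading throughout.

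\emph{Nonbipartite case.} By Theorem~\ref{thm:nonbip-mcg-K4-prism-comi}, $G$ is \KFB\ or \PB; in the first case we are done. In the second case we show that $\overline{C_6}$ is itself \TB. Label the two triangles of $\overline{C_6}$ as $a_1a_2a_3$ and $b_1b_2b_3$, with matching edges $a_ib_i$. Then the three paths
\[
a_1b_1, \qquad a_1a_2b_2b_1, \qquad a_1a_3b_3b_1
\]
are odd and internally disjoint, and together they cover all six vertices. They therefore form a spanning, hence conformal, subgraph of $\overline{C_6}$ that is a \bisub\ of $\Theta$.

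It remains to check that the \comi\ relation is transitive. Let $H'$ be a conformal \bisub\ of $\overline{C_6}$ in $G$. Replacing each edge of $\overline{C_6}$ by the corresponding odd path of $H'$ turns the chosen $\Theta$-\bisub\ into a \bisub\ $H''\subseteq H'$ of $\Theta$. We then need a \pema\ of $G-V(H'')$. Take a \pema\ of $G-V(H')$. The vertices of $H'$ not in $H''$ are internal vertices of bi-subdivided edges of $\overline{C_6}$ whose edges are unused in $\Theta$; here there are none, since our $\Theta$-\bisub\ is spanning. In general such vertices come in even runs along paths and can be matched along them. Combining the two matchings gives the required \pema, so $G$ is \TB.

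\emph{Bipartite case.} We take an \ed\ $(G_1,\dots,G_r)$ of $G$. In a bipartite \mcg\ every removable ear is a single ear, since a double ear cannot occur when both $G-P$ and $G-P-Q$ are \mc\ bipartite graphs; we would check this quickly via the standard fact that each ear joins the two colour classes. So $G_1=K_2$ and $G_2$ is an even cycle. Since $G$ is not a cycle, there is a least index $i$ for which $G_i$ is not a cycle. Then $G_{i-1}$ is an even cycle $C$, and $G_i=C+P$ for an odd path $P$ whose ends $u,v$ lie on $C$.

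Because $G_i$ is bipartite and $P$ is odd, $u$ and $v$ lie in opposite colour classes. Hence both $uv$-paths of $C$ are odd, and together with $P$ they form a \bisub\ of $\Theta$ that equals $G_i$. Every member of an \ed\ is conformal in $G$, so $G$ is \TB.

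The only step requiring genuine care is the transitivity of the \comi\ relation (and, relatedly, the parity bookkeeping). Everything else is a direct application of Lov{\'a}sz's Theorem and of the basic ear-decomposition facts stated above.
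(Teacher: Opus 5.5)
Your proof is correct, but it takes a genuinely different route from the paper's. The paper does not split on bipartiteness and does not use Lov\'asz's Theorem. Instead it proves the stronger, localized Theorem~\ref{thm:claw-theta-K4-comi}: every $3$-claw lies in a conformal bi-subdivision of $\Theta$ or of $K_4$. It takes a counterexample with $|E(G)|$ minimum and shows that $E(G)=M_1\cup M_2\cup M_3$, where $M_i$ is the unique perfect matching containing the claw edge $e_i$. Hence $G$ is cubic with exactly three perfect matchings, and the paper finishes with the characterization of such graphs (Theorem~\ref{thm:mcg-3-pema}). The localization at a prescribed claw is what the paper needs later, in the proof of the $K_4$-decoration Theorem (\ref{thm:K4-decoration-contrapositive}), where the claw at $\overline{x}$ is given in advance. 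Your argument yields only the global statement, and in the nonbipartite case it leans on Lov\'asz's deep theorem (\ref{thm:nonbip-mcg-K4-prism-comi}).

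In fact you could avoid Lov\'asz altogether, because your bipartite argument extends to every matching covered graph other than $K_2$ and even cycles. In any ear decomposition, $G_3$ is obtained from the even cycle $G_2$ by one ear.
\begin{itemize}
\item A single ear must join the two colour classes of $G_2$ (otherwise its end edges lie in no perfect matching), so $G_3$ is a bi-subdivision of $\Theta$.
\item For a removable double ear $(P,Q)$, each of $P$ and $Q$ must have both ends in one colour class, since $G_2+P$ and $G_2+Q$ are not matching covered. A short parity count then shows that the end edges of $P$ lie in no perfect matching unless the two classes differ and the four ends interleave on $G_2$. In that case $G_3$ is a bi-subdivision of $K_4$.
\end{itemize}
Your caveat about $K_2$ and even cycles is right; the paper's claw-based proof also tacitly needs a vertex of degree three.

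A few small repairs are needed.
\begin{itemize}
\item In the transitivity step, the claim that no vertex of $H'$ lies outside $H''$ is false. The edges $a_2a_3$ and $b_2b_3$ of $\overline{C_6}$ are unused by your $\Theta$, and the corresponding odd paths of $H'$ may well have internal vertices. Your next sentence (match these even runs along their paths) is the correct argument, so just drop the word ``none''.
\item Your justification that a bipartite matching covered graph has no removable double ear is muddled, since for a double ear $G-P$ is by definition not matching covered. Argue instead as follows. Suppose $G-P-Q$ is bipartite and matching covered, and put back the odd path $P$, whose ends $u,v$ lie in opposite colour classes. The result is still matching covered: combine the alternate edges of $P$ starting at $u$ with a perfect matching of $(G-P-Q)-u-v$. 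So $G-Q$ is matching covered, contradicting condition~(i).
\item $K_2$ is not a cycle, so your ``least index $i$ with $G_i$ not a cycle'' should range over $i\ge 2$; in fact $i=3$ always.
\end{itemize}
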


We now introduce a notation that we will find useful later.
Given a finite set of {\mcg}s, say~$\mathcal{J}$, we say that a \mcg~$G$ is $\mathcal{J}$-free
if $G$ is \JF\ for each graph $J \in \mathcal{J}$; otherwise, we say that $G$ is $\mathcal{J}$-based.
For instance, Lov{\'a}sz's Theorem (\ref{thm:nonbip-mcg-K4-prism-comi})
states that every nonbipartite \mcg\ is $\{K_4,\overline{C_6}\}$-based,
whereas Theorem~\ref{thm:mcg-theta-K4-comi} states
that there is no $\{\Theta,K_4\}$-free \mcg.

Next, we briefly discuss `{\mami}s', introduced by Norine and Thomas~\cite{noth07},
and compare them to {\comi}s.

\subsection{Bi-contractions and {\mami}s}
\label{sec:matching-minors}

Bi-contracting a vertex~$v$ of degree two, with two distinct neighbors, is the operation
of contracting the two edges incident with it;
see Figure~\ref{fig:mami-need-not-imply-comi}.
It is easily observed that if $G$ is \mc,
then the graph resulting from any bi-contraction, is also \mc.

\begin{figure}[!htb]
\centering

\begin{tikzpicture}

\draw (90:2) -- (162:2) -- (234:2) -- (306:2) -- (18:2) -- (90:2);
\draw (0:0) -- (162:2);
\draw (0:0) -- (234:2);
\draw (0:0) -- (306:2);
\draw (0:0) -- (18:2);
\draw (0:0) -- (90:2);

\draw (90:2)node{};
\draw (162:2)node{};
\draw (234:2)node{};
\draw (306:2)node{};
\draw (18:2)node{};
\draw (0:0)node{};

\draw (270:2.2) node[nodelabel]{$W_5$};

\end{tikzpicture}
\hspace*{1.5in}
\begin{tikzpicture}

\draw (90:2) -- (162:2) -- (234:2) -- (306:2) -- (18:2) -- (90:2);
\draw (180:0.6) -- (0:0.6);
\draw (180:0.6) -- (162:2);
\draw (180:0.6) -- (234:2);
\draw (0:0.6) -- (306:2);
\draw (0:0.6) -- (18:2);
\draw (0:0.6) -- (90:2);

\draw (90:2)node{};
\draw (162:2)node{};
\draw (234:2)node{};
\draw (306:2)node{};
\draw (18:2)node{};
\draw (180:0.6)node{};
\draw (0:0.6)node{};
\draw (0:0)node{};

\draw (90:0.3)node[nodelabel]{$v$};

\draw (270:2.2) node[nodelabel]{$G$};

\end{tikzpicture}
\caption{The odd wheel $W_5$ is a \mami\ of~$G$ but $W_5$ is not a \comi\ of~$G$}
\label{fig:mami-need-not-imply-comi}
\end{figure}

A \mcg~$H$ is a {\em \mami} of a \mcg~$G$ if the former may be obtained from the latter
by any sequence of two types of operations: (i) deletion of a removable ear, and (ii) bi-contraction.
We now state an equivalent definition of {\comi}s.

Let $v$ be a vertex of degree two in a graph~$G$, with two distinct neighbors $u$~and~$w$,
one of which, say $u$, has degree two as well. Observe that if~$H$ is obtained from~$G$ by
\mbox{bi-contracting}~$v$ then $G$ is a bi-subdivision of~$H$; we refer to such a bi-contraction
as a {\em restricted bi-contraction}. (It is simply the reverse operation of bi-subdividing an edge
by inserting two subdivision vertices.)
This leads us the following observation.

\begin{prp}
A \mcg~$J$ is a \comi\ of a \mcg~$G$ if and only if $J$ may be obtained from~$G$
by a sequence of two types of operations: (i) deletion of a removable ear, and (ii) restricted bi-contraction. \qed
\end{prp}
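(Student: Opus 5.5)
We prove the two directions separately, working directly from the definition of a \comi: $J$ is a \comi\ of $G$ if $G$ has a conformal subgraph $H$ that is a \bisub\ of~$J$.

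\emph{Sufficiency.} Suppose $J$ is obtained from $G$ by a sequence of operations of types (i) and (ii). We argue by induction on the length of the sequence, and show that $G$ has a conformal subgraph that is a \bisub\ of~$J$.

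For the inductive step, let $G'$ be obtained from $G$ by one operation, and assume $G'$ has a conformal subgraph $H'$ that is a \bisub\ of~$J$. There are two cases.
\begin{enumerate}
\item[(i)] $G'=G-P$ or $G'=G-P-Q$, where $P$ (respectively $(P,Q)$) is a removable ear. Each ear is an odd path whose internal vertices have degree two. The internal vertices of an odd path can be matched along the path. So a perfect matching of $G'-V(H')$, extended by the internal vertices of the ear(s), gives a perfect matching of $G-V(H')$. Hence $H'$ is also conformal in~$G$.
\item[(ii)] $G'$ is obtained from $G$ by a restricted bi-contraction at~$v$. Then $G$ is obtained from $G'$ by bi-subdividing an edge $e$ with two new vertices. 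If $e\in E(H')$, let $H$ be the subgraph obtained by performing the same bi-subdivision in~$H'$. Then $H$ is a \bisub\ of~$J$, and $G-V(H)=G'-V(H')$, so $H$ is conformal. If $e\notin E(H')$, then $H'$ itself is a subgraph of $G$ after relabelling. The edge $e$ is either unused by a perfect matching $M$ of $G'-V(H')$, or it is used by~$M$. If $e\notin M$, the two new vertices are matched to each other. If $e\in M$, the subdivided path of three edges has its first and third edges in the matching. Either way $M$ extends to a perfect matching of $G-V(H')$, so $H'$ remains conformal.
\end{enumerate}

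\emph{Necessity.} Let $H$ be a conformal subgraph of $G$ that is a \bisub\ of~$J$. By the strengthened \ed\ result quoted above, $H$ is conformal and \mc\ (assuming $J\ne K_2$, so that every \bisub\ of $J$ is \mc). Hence there is an \ed\ of $G$ in which $H$ appears. Reading that sequence backwards from $G$ down to $H$ gives a sequence of removable-ear deletions taking $G$ to~$H$.

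It then remains to take $H$ to~$J$. Each edge of $J$ that is bi-subdivided in $H$ corresponds to a path of odd length $2k+1$ whose internal vertices have degree two. Repeatedly bi-contracting an internal vertex adjacent to another internal vertex shortens the path by two. Such a step is restricted, because that vertex and its neighbour both have degree two, and its two neighbours are distinct. Repeating this until the path is a single edge, for every subdivided edge, yields~$J$.

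\emph{Degenerate case.} When $J=K_2$, the hypothesis that $J$ is a \mcg\ forces $H$ to be an odd path, but a \bisub\ of $K_2$ that is \mc\ must be $K_2$ itself. So $H=K_2$ is a conformal edge of~$G$, and we appeal to the fact that any edge of a \mcg\ starts some \ed.

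\emph{Main obstacle.} The only nontrivial ingredient is the claim that every conformal \mc\ subgraph appears in some \ed. We take this from the literature (\cite{lopl86}), as the paper does. Everything else is routine bookkeeping of perfect matchings.
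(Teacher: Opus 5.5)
Your proof is correct and follows essentially the argument the paper leaves implicit (the paper gives no written proof). The strengthened ear-decomposition theorem takes $G$ down to the conformal bi-subdivision $H$, and restricted bi-contractions undo the bi-subdivisions; conversely, each operation can be reversed while preserving a conformal bi-subdivision of $J$.

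One small slip occurs in the case $J=K_2$: you cannot conclude $H=K_2$, since $H$ need not be matching covered. For example, a path of length three in $C_6$ is a conformal bi-subdivision of $K_2$. This is harmless, because every matching covered graph has an ear decomposition starting with $K_2$, so $K_2$ is reached by ear deletions alone.
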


Consequently, as mentioned earlier, {\mami}s is indeed a weaker notion of graph containment than
that of {\comi}s.

\begin{cor}
\label{cor:comi-implies-mami}
If a \mcg~$J$ is a \comi\ of a \mcg~$G$ then $J$ is a \mami\ of~$G$. \qed
\end{cor}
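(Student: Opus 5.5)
The corollary should follow directly from the Proposition just before it; I would not go back to the underlying definitions of bi-subdivision or conformal subgraph.

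Assume that $J$ is a \comi\ of~$G$. By the preceding Proposition, $J$ is obtained from~$G$ by a finite sequence of operations. Each operation is either (i) the deletion of a removable ear, or (ii) a restricted bi-contraction.

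I then compare this sequence with the definition of a \mami. That definition allows exactly two operations: deletion of a removable ear, and bi-contraction of a vertex of degree two with two distinct neighbors. Operations of type~(i) already have the first form. A restricted bi-contraction is, by its definition, the bi-contraction of a degree-two vertex $v$ with distinct neighbors $u$ and~$w$, where we additionally require $u$ to have degree two. So it is a special case of a bi-contraction. Every step of the sequence is therefore one of the operations permitted for {\mami}s, and $J$ is a \mami\ of~$G$.

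The only point that needs care is that each intermediate graph is \mc. The definition of a \mami\ is phrased for {\mcg}s, so this matters. It already holds in the sequence supplied by the Proposition. Deleting a removable ear leaves a \mc\ graph by the definition of a removable ear. Bi-contraction preserves being \mc, as noted at the start of this subsection. Hence no step is a genuine obstacle; the argument is just the inclusion of restricted bi-contractions among all bi-contractions.
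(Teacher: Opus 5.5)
Your proposal is correct and follows the paper's own reasoning. The corollary comes straight from the preceding Proposition, since a restricted bi-contraction is a special case of a bi-contraction. So any sequence of removable-ear deletions and restricted bi-contractions that witnesses a conformal minor also witnesses a matching minor.
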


In general, the converse does not hold; see Figure~\ref{fig:mami-need-not-imply-comi}.
That being said, it is easily verified that the two notions coincide when the ``minor'' is subcubic.

\begin{prp}
A subcubic \mcg~$J$ is a \mami\ of a \mcg~$G$ if and only if~$J$ is a \comi\ of~$G$. \qed
\end{prp}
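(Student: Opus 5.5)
The forward implication is Corollary~\ref{cor:comi-implies-mami}. For the converse I would use induction on the number of operations in a sequence that produces~$J$ from~$G$, reading the sequence backwards. It then suffices to prove two one-step lemmas, for a subcubic \mcg~$J$ and a \mcg~$G$:
\begin{enumerate}[(a)]
\item if $G'$ is obtained from~$G$ by deleting a removable ear and $J$ is a \comi\ of~$G'$, then $J$ is a \comi\ of~$G$;
\item if $G'$ is obtained from~$G$ by a bi-contraction and $J$ is a \comi\ of~$G'$, then $J$ is a \comi\ of~$G$.
\end{enumerate}
The base case $G=J$ is trivial.

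For (a), I would use transitivity of conformality. Let $H\subseteq G'$ be a conformal bi-subdivision of~$J$, and let $M'$ be a \pema\ of $G'-V(H)$. The deleted ear consists of one or two odd paths whose internal vertices have degree two, so their internal vertices come in an even number per path. Matching them along each path gives a matching $N$, and $M'\cup N$ is a \pema\ of $G-V(H)$. Hence $H$ is a conformal subgraph of~$G$.

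For (b), let $v$ be the bi-contracted vertex with neighbours $u$ and~$w$, and let $x$ be the vertex of~$G'$ into which $u,v,w$ are merged. Each edge of~$G'$ at~$x$ comes from an edge of~$G$ at $u$ or at~$w$. Let $H'\subseteq G'$ be a conformal bi-subdivision of~$J$, with $M'$ a \pema\ of $G'-V(H')$. I would split into cases.
\begin{itemize}
\item \emph{$x\notin V(H')$.} Then $x$ is matched by $M'$ through an edge coming from, say, $u$. Add the edge $vw$ to obtain a \pema\ of $G-V(H')$; here $H'$ is a subgraph of~$G$ since it avoids~$x$.
\item \emph{$x\in V(H')$.} Then $x$ has degree at most three in~$H'$, because $H'$ is a bi-subdivision of a subcubic graph. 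If all $H'$-edges at~$x$ come from~$u$, take $H$ to be $H'$ with $x$ renamed~$u$, and extend $M'$ by $vw$. Otherwise the $H'$-edges at~$x$ split as $1+1$ or $1+2$ between $u$ and~$w$. Take $H$ to be $H'$ with $x$ replaced by the path $u\,v\,w$, keeping $M'$ unchanged; this already covers every vertex of $G$ outside~$H$.
\end{itemize}
In the $1+1$ split, $x$ is a degree-two vertex of~$H'$; a segment of $H'$ gains two vertices, so its parity is preserved. In the $1+2$ split, the branch vertex of~$J$ becomes~$w$. The single edge at~$u$ now reaches~$w$ through the subpath $u\,v\,w$, which adds exactly two subdivision vertices to that segment. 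So in every case $H$ is a bi-subdivision of~$J$ and is conformal in~$G$.

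The main point to check is this parity and degree bookkeeping in (b). This is exactly where subcubicity is used. If $x$ had four or more $H'$-edges split $2+2$ between $u$ and~$w$, neither $u$ nor $w$ could serve as a single branch vertex. That is the situation illustrated by $W_5$ in Figure~\ref{fig:mami-need-not-imply-comi}.

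I would also verify that the intermediate graphs in the sequence are \mc. This holds because both operations preserve being \mc. Combining (a) and (b) along the reversed sequence, starting from $G'=J$, which is trivially a \comi\ of itself, yields that $J$ is a \comi\ of~$G$.
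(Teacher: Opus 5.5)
Your proof is correct. The paper gives no argument beyond calling this easily verified, and your one-step lifting of a conformal bi-subdivision through ear deletions and bi-contractions supplies the omitted details. Subcubicity enters exactly where you say: the at most three $H'$-edges at $x$ split as $k+0$, $1+1$ or $1+2$ between $u$ and $w$, never $2+2$.

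One detail is worth stating explicitly in (b): $u$ and $w$ are nonadjacent in $G$. Every perfect matching uses exactly one of $uv$ and $vw$, so an edge $uw$ would lie in no perfect matching. Hence no loop is created, and each edge of $G'$ at $x$ comes from exactly one of $u$ and $w$.
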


Lov{\'a}sz's Theorem (\ref{thm:nonbip-mcg-K4-prism-comi}) immediately leads to two natural problems:
characterize \KFF,
and likewise \PF, {\mcg}s. The planar case for each of these problems was solved
by Kothari and Murty~\cite{komu16}. In order to do so, they first ``reduced''
each of these problems (irrespective of planarity) to the case of `bricks' (which are
special types of nonbipartite {\mcg}s); we explain
their ``reduction'' in the next section since it is relevant to our approach for proving the
Main Theorem (\ref{thm:conformal-minors-vs-S-minors-cubic}).

\subsection{Tight cuts, bricks and braces}

For a graph $G$ and a subset $X \subseteq V(G)$, we use $\partial_G(X)$,
or simply $\partial(X)$ when there is no ambiguity,
to denote the corresponding cut
(that is, the set of edges with one end in $X$ and the other end in $\overline{X}:=V(G)-X$);
we refer to $X$ and $\overline{X}$ as the {\em shores} of the cut.
A cut is {\em trivial} if either of its shores is a singleton; otherwise, it is {\em nontrivial}.
For a trivial cut (i.e., edges incident at a vertex $v$),
we simplify notation as follows: $\partial(v) := \partial(\{v\})$.
By a {\em $k$-cut}, we mean a cut that comprises precisely $k$ edges.

For a cut $C:=\partial(X)$, we use $G/X \rightarrow x$, or simply $G/X$,
to denote the graph obtained from $G$ by shrinking the shore $X$ to a single vertex $x$; all edges
with both ends in $X$ are deleted. The graph $G/\overline{X}$ is defined analogously.
We refer to $G/X$ and $G/\overline{X}$ as the $C$-contractions of $G$.

A cut $C$ of a \mcg~$G$ is a {\em tight cut} if it meets each \pema~$M$ exactly once (i.e.,
$|M \cap C| = 1$). It may be easily verified that if $C$ is a tight cut of a \mcg~$G$
then each $C$-contraction of~$G$ is \mc\ as well.
A \mcg\ devoid of nontrivial tight cuts is called a {\em brick} if it is nonbipartite, and a {\em brace} otherwise.
For instance: $\Theta, C_4$ and $K_{3,3}$ are braces whereas $K_4$ and $\overline{C_6}$ are bricks.

Using tight cuts, one may now define the well-known {\em tight cut decomposition procedure} --- for any
matching covered graph $G$ --- as follows. If $G$ has a nontrivial tight cut, say~$C$, then we obtain
two smaller {\mcg}s by considering the $C$-contractions of~$G$;
if either of them has a nontrivial tight cut then we
repeat the same process to obtain even smaller {\mcg}s. We do this recursively until we obtain a list of
bricks and braces. Note that, since one may choose any nontrivial tight cut at each step, a \mcg\
may admit many different applications of the tight cut decomposition procedure.
However, Lov{\'a}sz~\cite{lova87} proved the following remarkable result.

\begin{thm}
{\sc [Unique Tight Cut Decomposition Theorem]}
Any two applications of the tight cut decomposition procedure on a matching covered graph~$G$
yield the same list of bricks and braces (up to multiplicities of edges).
\end{thm}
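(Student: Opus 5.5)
We plan to prove this by induction on $|V(G)|$, where the statement to be proved is that the multiset of bricks and braces (up to edge multiplicities) does not depend on the choices made. If $G$ has no nontrivial tight cut there is nothing to prove. Otherwise, consider two applications of the procedure whose first steps use nontrivial tight cuts $C:=\partial(X)$ and $D:=\partial(Y)$. Since each $C$-contraction and each $D$-contraction is a smaller \mcg, the induction hypothesis tells us that each of them has a well-defined list of bricks and braces. It therefore suffices to show that the list obtained from the pair $G/X,\,G/\overline{X}$ coincides with the list obtained from the pair $G/Y,\,G/\overline{Y}$.

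\emph{Laminar case.} Replacing $X$ or $Y$ by its complement if necessary, suppose $Y\subseteq X$. We first check that $D$ is a nontrivial tight cut of $G/\overline{X}$ and that $C$ is a tight cut of $G/Y$. Perfect matchings of these contractions extend to perfect matchings of $G$, using the tightness of the other cut, so both claims follow. We then decompose $G$ using both cuts, obtaining the three graphs $G/\overline{Y}$, $(G/\overline{X})/Y$ and $G/X$; the middle one is the same as $(G/Y)/\overline{X}$. Applying induction to $G/\overline{X}$, its list is the union of the lists of $(G/\overline{X})/Y$ and $G/\overline{Y}$. Symmetrically, applying induction to $G/Y$, its list is the union of the lists of $(G/Y)/\overline{X}$ and $G/X$. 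Both first steps thus produce the union of the lists of these three graphs, so they agree. If $C$ is a trivial cut of $G/Y$, the corresponding graph is just $G/X$ up to multiple edges, and the same conclusion holds.

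\emph{Crossing case.} This is the main obstacle, and we handle it by uncrossing. Since $|X|$ and $|Y|$ are odd (tight cuts in a matchable graph have odd shores), we may replace $Y$ by $\overline{Y}$ if needed so that $|X\cap Y|$ is odd; then $|X\cup Y|$ is odd as well. For every perfect matching $M$ we have
\[
|M\cap\partial(X\cap Y)|+|M\cap\partial(X\cup Y)|+2|M\cap E(X-Y,Y-X)| = |M\cap C|+|M\cap D| = 2 .
\]
Each of the first two terms is odd, hence at least $1$. So both equal $1$, and no perfect matching uses an edge between $X-Y$ and $Y-X$. Because $G$ is \mc, there are no such edges at all, and both $\partial(X\cap Y)$ and $\partial(X\cup Y)$ are tight.

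If one of these two cuts is nontrivial, it is laminar with both $C$ and $D$. Applying the laminar case twice, once to the pair consisting of $C$ and this cut and once to the pair consisting of $D$ and this cut, shows that both first steps yield the same list as a decomposition starting from the new cut. In the degenerate case both cuts are trivial, say $X\cap Y=\{u\}$ and $\overline{X}\cap\overline{Y}=\{w\}$. Using the absence of edges between $X-Y$ and $Y-X$, we check directly that the map sending $u$ to the contracted vertex $y$ and the contracted vertex $\overline{x}$ to $w$ is an isomorphism $G/\overline{X}\cong G/Y$ up to the multiplicity of the edge $u\overline{x}$. Symmetrically, $G/X\cong G/\overline{Y}$ up to multiplicities. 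Hence the two first steps produce the same pair of graphs up to multiple edges, and induction finishes. This degenerate case is exactly why the theorem holds only up to multiplicities of edges.
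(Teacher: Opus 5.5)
The paper does not prove this theorem; it quotes it from Lovász, so there is no proof in the text to compare with. Your argument is the standard proof by induction plus uncrossing of tight cuts, and it is correct. The laminar case, the parity-and-counting step showing that $\partial(X\cap Y)$ and $\partial(X\cup Y)$ are tight with no edges between $X-Y$ and $Y-X$, and the reduction of the nondegenerate crossing case to two laminar cases all work as you describe.

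A few details in the degenerate case should be written out.

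\emph{Presence of the edge $u\overline{x}$.} ``Isomorphic up to the multiplicity of $u\overline{x}$'' is only enough if $u\overline{x}$ is present in $G/\overline{X}$ exactly when $yw$ is present in $G/Y$. The absence of $(X-Y)$--$(Y-X)$ edges alone does not give this, but matching-coveredness does:
\begin{itemize}
\item $|Y-X|=|Y|-1$ is even, and every edge of $\partial(Y-X)$ ends at $u$ or at $w$.
\item Hence a perfect matching meets $\partial(Y-X)$ either in no edge or in exactly two, one at $u$ and one at $w$.
\item By connectivity $\partial(Y-X)\neq\emptyset$, and each of its edges lies in some perfect matching.
\end{itemize}
So both $u$ and $w$ have neighbours in $Y-X$, which gives both $u\overline{x}$ and $yw$. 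The same argument applied to $X-Y$ gives $xw$ in $G/X$ and $u\overline{y}$ in $G/\overline{Y}$.

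\emph{Passing from graphs to lists.} To finish by induction you need that two matching covered graphs with the same underlying simple graph have the same tight cuts, and hence the same bricks and braces up to multiplicities. This is immediate, because perfect matchings project to, and lift from, the underlying simple graph while preserving how many edges they have in each cut. Still, it is the step that turns ``the same pair of graphs up to multiple edges'' into ``the same list''.

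\emph{A vacuous remark.} Your comment about $C$ possibly being trivial in $G/Y$ can be dropped. Once $Y\subsetneq X$, parity forces $|X-Y|\ge 2$. So $C$ is nontrivial in $G/Y$, and $D$ is nontrivial in $G/\overline{X}$.
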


In light of the above theorem, for any \mcg~$G$, we may refer to ---
the underlying simple graphs produced by any application of the tight cut decomposition procedure on~$G$ ---
as the {\em bricks and braces of $G$}.
It is worth noting that nontrivial tight cuts may be discovered in polynomial-time; this follows, for instance,
from the work of Edmonds, Lov{\'a}sz and Pulleyblank \cite{elp82}.
Consequently, the bricks and braces of a \mcg\
may be computed in polynomial-time.

A graph $G$ is {\em bicritical} if the graph~$G-u-v$ is matchable for each pair $u,v$ of distinct vertices.
They \cite{elp82} also proved the following characterization of bricks that we will find useful later.

\begin{thm}
\label{thm:elp-brick-characterization}
A graph~$G$, of order four or more, is a brick if and only if $G$ is \mbox{$3$-connected} and bicritical.
\end{thm}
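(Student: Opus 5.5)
The plan is to prove the two implications separately. Throughout I use two standard facts. First, Tutte's theorem: a graph with an even number of vertices is bicritical if and only if it has no \emph{barrier} (a set $B$ with $o(G-B)=|B|$) of size at least two. Second, in a \mcg\ every barrier $B$ is stable, and $G-B$ has no even components; the latter holds at least for maximal barriers, and that is the only case I need.

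\emph{Brick $\Rightarrow$ $3$-connected and bicritical.} Let $G$ be a brick of order four or more.

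Suppose first that $G$ is not bicritical. Then $G$ has a barrier $B$ with $|B|\ge 2$; enlarge it to a maximal barrier, so that every component of $G-B$ is odd. Let $K$ be one of these odd components. Every \pema\ matches each odd component of $G-B$ to a distinct vertex of~$B$, using exactly one edge leaving $K$. Hence $\partial(V(K))$ is tight. Now there are two cases.
\begin{itemize}
\item If some component $K$ has at least three vertices, then its other shore contains $B$ and a further component, so it also has at least three vertices. Thus $\partial(V(K))$ is a nontrivial tight cut, a contradiction.
\item If every component is a single vertex, then $G$ is bipartite with parts $B$ and $V(G)-B$, since $B$ is stable. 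This contradicts the assumption that $G$ is nonbipartite.
\end{itemize}
So $G$ is bicritical.

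Now suppose $G$ is bicritical but not $3$-connected. A \mcg\ of order at least four is $2$-connected, so $G$ has a $2$-separation $\{u,v\}$. Because $G-u-v$ is matchable, every component of $G-u-v$ is even. Let $X$ be one component, and take the cut $D:=\partial(X\cup\{u\})$. Each edge of $D$ is incident with $u$ or with $v$, so a \pema\ contains at most two edges of~$D$. On the other hand $|X\cup\{u\}|$ is odd, so a \pema\ meets $D$ an odd number of times. Hence every \pema\ meets $D$ exactly once, and $D$ is tight. Its shores are nontrivial: $|X|\ge 2$, and the other shore contains $v$ together with another even component. This contradicts $G$ being a brick, so $G$ is $3$-connected.

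\emph{$3$-connected and bicritical $\Rightarrow$ brick.} A bicritical graph is \mc, and a $3$-connected bicritical graph of order at least four is nonbipartite (a bipartite graph cannot survive the deletion of two vertices from the same side). So it remains to show that $G$ has no nontrivial tight cut. I would reduce this to the key lemma of Edmonds, Lov{\'a}sz and Pulleyblank:
\begin{quote}
If a \mcg\ has a nontrivial tight cut, then it has either a barrier of size at least two or a $2$-separation whose removal leaves even components.
\end{quote}
Either outcome contradicts the hypotheses, the first by bicriticality and the second by $3$-connectivity.

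To prove the lemma, I would use Edmonds' description of the perfect matching polytope. Let $C=\partial(X)$ be a nontrivial tight cut. Then $x(C)=1$ holds on the whole face, so this odd-set constraint is an implied equality. An uncrossing argument on a family of tight odd sets, chosen to be laminar and of minimal total size, together with complementary slackness, should produce a dual certificate. That certificate is supported either on vertex constraints forming a barrier, or on two vertices that separate the graph.

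A more combinatorial alternative goes as follows. Fix $a,b\in\overline{X}$ and take a \pema\ $M$ of $G-a-b$ and a \pema\ $N$ of~$G$. Examine $M\triangle N$: its alternating cycles cross $C$ in a balanced way, which leaves a single $a$--$b$ alternating path. The aim is to use $3$-connectivity to reroute this path and so produce a \pema\ of $G$ meeting $C$ at least three times, which is impossible for a tight cut.

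I expect this lemma to be the main obstacle. The forward direction and the reduction above are routine parity arguments; the real content lies in turning a tight cut into a barrier or a $2$-separation. I would first attempt the polyhedral (uncrossing) proof.
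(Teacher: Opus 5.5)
The paper gives no proof of this statement; it quotes it from \cite{elp82}, so your proposal has to stand on its own.

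\textbf{What is correct.} Your forward direction is sound. The maximal-barrier argument works: a component with at least three vertices yields a nontrivial tight cut, and all-singleton components force bipartiteness. The parity argument showing that $\partial(X\cup\{u\})$ is tight at a $2$-separation $\{u,v\}$ is also right. In the converse, your preliminary observations hold: bicritical and connected implies matching covered, and bicritical of order at least four implies nonbipartite.

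\textbf{The gap.} The rest of the converse is not proved. Your ``key lemma'' is not a reduction. In a matchable graph, having a barrier of size at least two is equivalent to not being bicritical, by the same Tutte argument you use in the forward direction. For a $2$-connected graph of order at least four, having a $2$-separation is equivalent to not being $3$-connected. So the lemma says exactly that a bicritical, $3$-connected matching covered graph has no nontrivial tight cut. That is the contrapositive of what remains to be shown, and it is the genuinely deep half of the theorem of Edmonds, Lov\'asz and Pulleyblank.

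Neither of your sketches contains the mechanism that would prove it.

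\emph{Polyhedral route.} LP duality for the implied equality $x(C)=1$ gives vertex potentials together with nonnegative weights on odd-set constraints. Complementary slackness only tells you that every odd set with positive weight is itself a tight cut, possibly a nontrivial one. Even when $C=\partial(V(K))$ comes from a barrier, the natural certificate uses the constraints $x(\partial(K'))\ge 1$ of the other odd components, so it is not ``supported on vertex constraints forming a barrier''. Uncrossing can make this family laminar, but extracting a barrier of size at least two or a $2$-vertex cut from it is exactly the content of the theorem. Your ``should produce'' is precisely where the proof would have to be.

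\emph{Combinatorial route.} The analysis of $M\triangle N$ gives only parity information. The cycles are balanced on $C$ because $C$ is tight, so $|M\cap C|=1-|N\cap P\cap C|+|M\cap P\cap C|$. This is odd, but it may well equal $1$. Three-connectivity gives you vertex-disjoint structures across $C$ (for instance, by K\H{o}nig's theorem, three pairwise disjoint edges of $C$), not $M$-alternating paths. Bicriticality lets you delete only two vertices. You give no way to turn these into a perfect matching of $G$ with three edges in $C$.

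The missing idea is an argument that combines bicriticality and $3$-connectivity to produce either such a matching or a $2$-vertex cut. That requires a substantial argument, not a rerouting step.
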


The significance of the tight cut decomposition procedure arises from the following phenomenon:
for many properties of interest, whether or not a \mcg~$G$ has the desired property
depends entirely on whether its bricks and braces have the desired property.
This was exploited by Kothari and Murty~\cite{komu16} in the following manner.

\begin{thm}
\label{thm:cubic-brick-tight-cut-reduction}
Let $J$ denote any cubic brick. For any tight cut~$C$ of a \mcg~$G$,
the graph~$G$ is \JF\ if and only if both of its $C$-contractions are \JF.
\end{thm}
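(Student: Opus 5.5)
Equivalently, $G$ is $J$-based if and only if at least one of its $C$-contractions is $J$-based. Let $C:=\partial(X)$ be a tight cut of~$G$, with contractions $G_1:=G/\overline{X}\rightarrow\overline{x}$ and $G_2:=G/X\rightarrow x$. The case of trivial $C$ is immediate, so assume $C$ is nontrivial. I will prove the two directions separately. The key ingredients are that $J$ is cubic, and that a brick has no nontrivial tight cuts and is $3$-connected (Theorem~\ref{thm:elp-brick-characterization}).

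\emph{Contraction to $G$.} Suppose $G_1$ has a conformal subgraph $H_1$ that is a bi-subdivision of~$J$. If $\overline{x}\notin V(H_1)$, then $H_1$ is a subgraph of $G[X]$. Extend a perfect matching of $G_1-V(H_1)$, which matches $\overline{x}$ via some edge $e\in C$, by a perfect matching of $G_2$ containing~$e$. This yields a perfect matching of $G-V(H_1)$, so $H_1$ is conformal in~$G$. If $\overline{x}\in V(H_1)$, then $\overline{x}$ has degree $2$ or $3$ in $H_1$, and $H_1$ uses two or three edges of~$C$ at~$\overline{x}$. Their ends $u_1,u_2,u_3$ in $\overline{X}$ are distinct or not, but in $G_2$ the matching-covered structure lets me route them. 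Choose a perfect matching $M$ of $G_1-V(H_1)$, and lift $H_1$ to $G$ as follows. In $G_2$, I look for a subgraph $T$ which is a ``bi-subdivided claw'' (or odd path, in the degree-$2$ case) from the vertex~$x$ to the ends of those edges, such that $G_2-V(T)$ is matchable. Such a $T$ exists by an ear-decomposition argument in~$G_2$: take a conformal bi-subdivided claw rooted at~$x$ containing the prescribed edges of $\partial(x)$. For the degree-$3$ case this follows since $x$ has degree $\ge3$ and $G_2$ is matching covered. Then replace $\overline{x}$ by the claw centred in~$\overline{X}$, with parities arranged so that the legs have correct parity. Tightness of $C$ guarantees that the claw centre (the image of $x$'s partner side) is on the correct colour class. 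The union is then a conformal bi-subdivision of~$J$ in~$G$. I expect the parity bookkeeping here to be routine but fiddly.

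\emph{$G$ to contraction (main obstacle).} Suppose $H\subseteq G$ is a conformal bi-subdivision of~$J$, and let $M$ be a perfect matching of $G-V(H)$ extended by a perfect matching of~$H$ to a perfect matching $M'$ of~$G$. Since $C$ is tight, $|M'\cap C|=1$. Parity of $|X|$ shows that $|C\cap E(H)|$ is odd, as long as the edge of $M'\cap C$ lies in~$E(H)$; otherwise $|C\cap E(H)|$ is even.

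Now consider $H$ as a subdivision of~$J$. The edges of $C\cap E(H)$ correspond to a cut of~$J$, once we map each subdivided path to its edge, and each branch vertex of $J$ lies on one side. If all branch vertices lie in~$X$, then each path crossing $C$ crosses back, and I want to reroute through the shrunken vertex. The crucial claim is this: \emph{$C$ meets $H$ so that, in the sense of branch vertices, one shore contains at most one branch vertex, and the set of paths crossing $C$ forms a trivial cut of $J$ or is empty.} This is where the brick property enters. Using $M'$-alternating arguments, I would show that the cut of~$J$ induced by $C$ is tight in~$J$: every perfect matching of $H$ extends with $M$ to a perfect matching of~$G$, hence meets $C$ once. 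A tight cut in the brick $J$ must be trivial.

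Moreover, each subdivided path should cross $C$ at most once. A path crossing three or more times could be shortcut by switching along an even alternating segment, producing a perfect matching meeting $C$ at least three times, which contradicts tightness. With this, if the trivial cut isolates branch vertex $v$ in~$\overline{X}$, then in $G_1$ contracting $\overline{X}$ sends $v$ and its three legs' tails to~$\overline{x}$. The image of $H$ is a bi-subdivision of $J$ in~$G_1$, conformal via $M$ restricted to $X$ (with $\overline{x}$ covered by~$H$). If the cut is empty, $H$ lies inside one shore, and we choose the corresponding contraction directly. Establishing the ``crosses at most once / induced cut is tight'' claim rigorously is the step I expect to be hardest.
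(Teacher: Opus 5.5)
The paper does not prove this theorem; it is quoted from Kothari and Murty~\cite{komu16}. Your proposal therefore has to stand on its own, and as written both directions have genuine gaps.

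\emph{From a contraction to $G$.} When $\overline{x}$ is a cubic vertex of $H_1$, everything rests on finding in $G_2$ a conformal bi-subdivision of $\Theta$ whose claw at $x$ consists of the three prescribed edges. Your justification, that $x$ has degree at least three and $G_2$ is matching covered, is false: $K_4$ is matching covered and has no conformal bi-subdivision of $\Theta$. Your appeal to tightness is not an argument, since there are no colour classes when $G_2$ is nonbipartite. The reasoning would apply verbatim to the separating cut of $\overline{C_6}$ between its two triangles. There both contractions are $K_4$, yet $\overline{C_6}$ is $K_4$-free.

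A correct route uses Theorem~\ref{thm:claw-theta-K4-comi}: the claw lies in a conformal $H_2$ that is a bi-subdivision of $\Theta$ or of $K_4$. The $\Theta$ case works exactly as in the proof of Theorem~\ref{thm:K4-decoration-contrapositive}. If $H_2$ is a bi-subdivision of $K_4$, then $H_1\cup H_2$ is a conformal bi-subdivision of $J^{v}\odot K_4$, where $v$ is the vertex of $J$ represented by $\overline{x}$. Let $u_1,u_2,u_3$ be the neighbours of $v$. Since $J$ is bicritical, a perfect matching of $J-u_1-u_2$ must contain $vu_3$. Hence $J^{v}\odot K_4$ has a perfect matching using all three edges leaving the triangle. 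Its image in $H_1\cup H_2$, together with perfect matchings of $G_i-V(H_i)$ (which avoid $C$), is a perfect matching of $G$ meeting $C$ three times, contradicting tightness. This is where both hypotheses are really used.

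\emph{From $G$ to a contraction.} Your claim that each subdivided path crosses $C$ at most once is false. Let $G=H$ be the bi-subdivision of $K_4$ in which one edge $uv$ becomes the path $u\,a\,b\,c\,d\,v$, and let $\overline{X}=\{a,b,c\}$. Then $C=\{ua,cd\}$ is a $2$-cut with odd shores, hence tight. A single path crosses $C$ twice, the induced cut of $J$ is empty, yet $H$ lies in neither shore; also $|C\cap E(H)|=2$, contradicting your parity claim. This case is harmless once noticed, but it must be treated: the two crossing edges occupy positions of opposite parity on the path, so contracting the excursion leaves an odd path through the new vertex.

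More seriously, the fact that every perfect matching of $H$ meets $C$ once does not yield that $\partial_J(Y)$ is tight in $J$, where $Y$ is the set of branch vertices in $X$. Consider a once-crossing path $P_e$; position parity is the same from either end because $P_e$ is odd.
\begin{itemize}
\item If its crossing edge sits at an odd position, a perfect matching of $H$ uses it exactly when the corresponding perfect matching $N$ of $J$ contains $e$.
\item If it sits at an even position, the matching uses it exactly when $N$ avoids $e$.
\end{itemize}
Let $D_o$ and $D_\epsilon$ denote these two kinds of edges. What you actually get is $|N\cap D_o|+|D_\epsilon\setminus N|=1$ for every perfect matching $N$ of $J$, and you must prove $D_\epsilon=\emptyset$. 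This is also exactly what is needed for the image of $H$ in a contraction to be a bi-subdivision of $J$ at all, because an even-position crossing becomes an even path after contraction. You assert this without checking it.

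One way to close the gap: $J$ is the only brick of $H$, so one of the two contractions of $H$ with respect to the tight cut $C\cap E(H)$ is bipartite. Two-colouring it, and comparing degree counts and perfect matchings on the colour classes of its branch vertices, gives $|D_o|=3-2|D_\epsilon|$. Then $3$-edge-connectivity of $J$ forces $D_\epsilon=\emptyset$ and $|\partial_J(Y)|=3$. Only at that point is $\partial_J(Y)$ a tight, hence trivial, cut of the brick $J$.
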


It follows from Theorem~\ref{thm:cubic-brick-tight-cut-reduction}
that a \mcg~$G$ is \KFF\ if and only if each of its bricks is \KFF; likewise, for {\PF}ness.
Figure~\ref{fig:cubic-brick-tight-cut-reduction} shows two examples of {\mcg}s --- each of which
has a unique nontrivial tight cut indicated by the bold line;
their {\PF}ness and {\KFF}ness, respectively, may be explained using Theorem~\ref{thm:cubic-brick-tight-cut-reduction}.

\begin{figure}[!htb]
\centering
\begin{tikzpicture}

\draw[ultra thick] (-0.5,-0.75) -- (3.5,-0.75);
\draw (-0.75,-0.75)node[nodelabel]{$C$};

\draw (0.75,1) -- (0,0) -- (2.25,1) -- (1.5,0) -- (0.75,1) -- (3,0) -- (2.25,1);
\draw (1.5,-1.5) -- (0.75,-2.5) -- (2.25,-2.5) -- (1.5,-1.5);
\draw (0.75,-2.5) -- (0,0);
\draw (1.5,-1.5) -- (1.5,0);
\draw (2.25,-2.5) -- (3,0);

\draw (0,0)node{};
\draw (1.5,0)node{};
\draw (3,0)node{};

\draw (0.75,1)node{};
\draw (2.25,1)node{};

\draw (1.5,-1.5)node{};
\draw (0.75,-2.5)node{};
\draw (2.25,-2.5)node{};

\draw (1.5,-3)node[nodelabel]{(a)};

\end{tikzpicture}
\hspace*{1in}
\begin{tikzpicture}

\draw[ultra thick] (-0.5,-0.75) -- (3.5,-0.75);
\draw (3.75,-0.75)node[nodelabel]{$C$};

\draw (0.75,1) -- (0,0) -- (2.25,1) -- (1.5,0) -- (0.75,1) -- (3,0) -- (2.25,1);
\draw (1.5,-1.5) -- (0.5,-2.5);
\draw (1.5,-1.5) -- (2.5,-2.5);
\draw (2.5,-2.5) -- (0.5,-2.5);
\draw (1,-2) -- (2,-2);
\draw (0.5,-2.5) -- (0,0);
\draw (1.5,-1.5) -- (1.5,0);
\draw (2.5,-2.5) -- (3,0);

\draw (0,0)node{};
\draw (1.5,0)node{};
\draw (3,0)node{};

\draw (0.75,1)node{};
\draw (2.25,1)node{};

\draw (0.5,-2.5)node{};
\draw (2.5,-2.5)node{};
\draw (1,-2)node{};
\draw (2,-2)node{};
\draw (1.5,-1.5)node{};

\draw (1.5,-3)node[nodelabel]{(b)};

\end{tikzpicture}
\caption{(a) A \PF\ \mcg; (b) A \KFF\ \mcg}
\label{fig:cubic-brick-tight-cut-reduction}
\end{figure}

It is worth noting that neither implication of Theorem~\ref{thm:cubic-brick-tight-cut-reduction}
holds if one replaces `cubic brick' by `$2$-connected cubic graph'. For instance,: for the forward implication,
it may be easily verified that the graph shown in Figure~\ref{fig:cubic-brick-tight-cut-reduction}a is \KTTF,
whereas one of its \mbox{$C$-contractions} is \KTTB;
for the reverse implication,
if $G$ is the graph shown in Figure~\ref{fig:cubic-brick-tight-cut-reduction}b, then clearly it is $G$-based,
whereas each of its \mbox{$C$-contractions} is $G$-free.

Having reduced both of the problems (i.e., of characterizing \KFF, and \PF, {\mcg}s) to the case of bricks
(via Theorem~\ref{thm:cubic-brick-tight-cut-reduction}),
Kothari and Murty~\cite{komu16} then characterized the \KFF\ planar bricks
as follows.

\begin{thm}
\label{thm:K4-free-planar-bricks}
A planar brick is \KFF\ if and only if its planar embedding has precisely two faces bounded
by cycles of odd length.
\end{thm}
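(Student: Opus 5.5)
The two directions need quite different tools. In both I use that a brick is $3$-connected (Theorem~\ref{thm:elp-brick-characterization}), so a planar brick~$G$ has an essentially unique planar embedding. I also use that, since $\sum_f |f| = 2|E(G)|$, the number of odd faces is even, and that it is nonzero because $G$ is nonbipartite. So ``precisely two odd faces'' is the same as ``fewer than four odd faces''.

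\emph{Sufficiency: exactly two odd faces implies \KFF.} I will argue by contrapositive. Suppose $G$ has a conformal subgraph~$H$ that is a \bisub\ of~$K_4$; conformality is not even needed for this direction. Consider the embedding of~$H$ inherited from~$G$. Each of its four faces is bounded by a \bisub\ of a triangle, which is an odd cycle~$C$. Take the closed disc bounded by~$C$, and let $b := |C|$. Let $n$ and $m$ be the numbers of vertices and edges of~$G$ strictly inside, and $k$ the number of faces of~$G$ inside. Euler's formula gives $k = m - n + 1$. The face lengths inside sum to $2m + b$, so the number of odd faces of~$G$ inside the disc is congruent to $b$ modulo~$2$, hence odd and in particular at least one. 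The four faces of~$H$ have disjoint interiors, so $G$ has at least four odd faces.

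\emph{Necessity: \KFF\ implies exactly two odd faces.} Equivalently, I must show that a planar brick with at least four odd faces is \KFB. I plan to do this by induction on $|E(G)|$, using the thin-edge/strictly-thin-edge generation theory of bricks (Carvalho--Lucchesi--Murty; Norine--Thomas for the planar case). Every planar brick other than a small family of base bricks has an edge~$e$ such that deleting~$e$ and bi-contracting the resulting degree-two vertices yields a smaller planar brick~$G'$. Moreover $G'$ is a \mami, and conformal minors of $G'$ of maximum degree three lift to~$G$: they are \comi s of~$G'$, hence \mami s of~$G$ by Corollary~\ref{cor:comi-implies-mami}, hence \comi s of~$G$ since $K_4$ is subcubic. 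Bi-contraction preserves face parities. Deleting~$e$ merges the two faces incident with~$e$, so the number of odd faces drops by two exactly when both of those faces are odd, and is unchanged otherwise. The base cases are odd wheels, prisms, staircases and the like, and I will check them directly. Odd wheels with at least four odd faces, the staircases/Möbius-type ladders with an odd rung pattern, and so on, all visibly contain a conformal bi-subdivided $K_4$. The prism itself has only two odd faces, so it is consistent with the claim.

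\emph{Main obstacle.} The hard step is the inductive one: choosing the thin edge~$e$ so that $G'$ still has at least four odd faces. This is automatic unless every available thin edge separates two odd faces and $G$ has exactly four odd faces. In that residual case I would first try an ear argument in place of induction. Using Lovász's Theorem (\ref{thm:nonbip-mcg-K4-prism-ear-dec}), if $G$ is not already \KFB, then it contains a conformal bi-subdivided $\overline{C_6}$, and its two triangles account for only two of the odd faces. I would then use the remaining odd faces, which lie inside the three even quadrilateral regions of the prism, together with bicriticality of~$G$, to route an extra ear pair. That would convert the prism into a conformal bi-subdivision of $K_4$, either via an intermediate $W_5$-like structure or by rerouting one rung. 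The delicate point is keeping the resulting subgraph conformal, and I expect this rerouting to be the main technical work.
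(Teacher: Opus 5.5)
The paper does not prove this theorem; it quotes it from Kothari and Murty~\cite{komu16}. So your proposal has to stand on its own.

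Your sufficiency direction is correct and complete. The faces of the inherited embedding of a bi-subdivided $K_4$ are bounded by odd cycles. The lengths of the faces of $G$ inside such a cycle sum to twice the number of interior edges plus the odd length of the cycle, so each of the four disjoint regions contains an odd number of odd faces. Neither Euler's formula nor conformality is needed.

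Your inductive machinery for necessity is also sound. A thin edge is removable, the retract of $G-e$ is a matching minor of $G$, and since $K_4$ is subcubic, a conformal $K_4$ in $G'$ lifts to one in $G$. Your face bookkeeping is right as well. A matching covered graph never has a degree-two vertex with adjacent neighbours, because the edge joining them would lie in no perfect matching. Hence bi-contractions create no loops and only shorten the two faces along the path by two. Your base-case checks can be carried out as you indicate.

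The genuine gap is exactly the case you flag, and it is the heart of the theorem: $G$ has exactly four odd faces and every available thin edge lies between two odd faces. Then $G'$ has exactly two odd faces, so by the direction you already proved it is $K_4$-free, and the induction yields nothing. You neither show that this case cannot arise nor treat it.

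The Lov\'asz-based fallback is only a plan, and its premise is not forced. Parity tells you only that each triangular region of a conformal bi-subdivided $\overline{C_6}$ contains an odd number of odd faces, and each quadrilateral region an even number. So the four odd faces may be split $3+1$ between the two triangular regions, with none in the quadrilaterals. Even when two odd faces do lie in a quadrilateral region, nothing in the proposal shows that an ear can be routed there to produce a bi-subdivision of $K_4$ whose complement is still matchable. That construction is precisely the substance of the necessity direction.

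Until that step is supplied, necessity is unproved. Alternatively, you could replace it with a generation result guaranteeing, when there are exactly four odd faces, a thin edge that does not lie between two of them.
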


Note that, by Theorem~\ref{thm:elp-brick-characterization}, bricks are $3$-connected; whence the planar ones
have a unique planar embedding. (We omit their characterization of \PF\ planar bricks.)

Next, we describe a generalization of tight cuts known as `separating cuts', and
revisit Theorem~\ref{thm:cubic-brick-tight-cut-reduction}.

\subsection{Separating cuts and {\Smi}s}

A cut~$C$ of a \mcg~$G$ is a {\em separating cut} if both $C$-contractions are also \mc.
It follows immediately that every tight cut is a separating cut; the converse is not true.
The following is easily proved; see \cite[Proposition 3]{kcll20}.

\begin{prp}
\label{prp:cubic-mcg-3cuts-are-separating-cuts}
In a $2$-connected cubic graph, each $3$-cut is a separating cut.
\end{prp}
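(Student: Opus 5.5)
To prove Proposition~\ref{prp:cubic-mcg-3cuts-are-separating-cuts}, let $G$ be a $2$-connected cubic graph and let $C:=\partial(X)$ be a $3$-cut of~$G$. We must show that both $C$-contractions $G/X$ and $G/\overline{X}$ are \mc. By symmetry it suffices to treat $G_1:=G/\overline{X}\rightarrow \overline{x}$. The plan is to show that $G_1$ is itself a $2$-connected cubic graph, and then invoke the fact, recalled earlier via Tutte's $1$-factor theorem, that a cubic graph is \mc\ if and only if it is $2$-connected.

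\textbf{Step 1: $G_1$ is cubic and loopless.} Every vertex of~$X$ keeps its degree, namely three, because edges with both ends in~$X$ survive and each edge of~$C$ now joins its end in~$X$ to~$\overline{x}$. The contracted vertex $\overline{x}$ has degree $|C|=3$. Shrinking deletes all edges inside $\overline{X}$, so no loops arise. Parallel edges may appear, but these are allowed.

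\textbf{Step 2: $G_1$ is $2$-connected.} Suppose first that $G_1$ has a cut vertex~$w$; it is convenient to argue with edge cuts instead.
\begin{itemize}
\item Since $G_1$ is cubic, $2$-connectedness is equivalent to $2$-edge-connectedness, that is, having no bridge. This holds because in a graph of maximum degree three, a cut vertex always yields a bridge incident with it.
\item Suppose $e$ is a bridge of $G_1$, and let $\{e\}=\partial_{G_1}(Y)$ with $\overline{x}\notin Y$, so that $Y\subseteq X$. Cuts of $G_1$ whose shores avoid $\overline{x}$ are exactly the cuts $\partial_G(Y)$ of~$G$. Hence $\partial_G(Y)=\{e\}$ would be a bridge of~$G$.
\item This is impossible: $G$ is $2$-connected and, being cubic, has at least four vertices (the $\Theta$ graph excepted, which is trivially fine), so it has no bridge.
\end{itemize}
We also need $G_1$ to be connected. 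A cut $\partial_{G_1}(Y)$ with $Y\subseteq X$ empty would correspond to an empty cut of the connected graph~$G$, a contradiction.

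\textbf{Step 3: conclude.} The same argument applies to $G/X$. Both $C$-contractions are therefore $2$-connected cubic graphs, hence \mc, and so $C$ is a separating cut.

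The main point needing care is the equivalence between bridgelessness and $2$-connectedness for cubic graphs with parallel edges. Suppose $w$ is a cut vertex with $\partial(w)$ split among at least two components of $G_1-w$. By degree three, some component receives exactly one edge from~$w$, and that edge is a bridge. Parity explains why this component cannot instead receive two edges: a component $K$ of $G_1-w$ has degree sum $3|K|$ counted inside plus its edges to~$w$, and this total must be even. So the edge counts from $w$ to the components have the same parity as their sizes, and with three edges split into at least two nonzero parts, some part equals~$1$.
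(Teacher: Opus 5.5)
Your proof is correct. Each $C$-contraction is loopless and cubic. Every cut of $G/\overline{X}$ with a shore $Y\subseteq X$ is literally the cut $\partial_G(Y)$, so a bridge of the contraction would be a bridge of $G$. The fact quoted in the introduction, that a cubic graph is \mc\ if and only if it is $2$-connected, then finishes the argument.

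The paper gives no proof of its own; it calls the statement easy and cites \cite{kcll20}. Yours is the natural argument for it.

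The only blemish is your closing parity remark, which is unnecessary and muddled. That some component of $G_1-w$ receives exactly one of the three edges at $w$ follows from pigeonhole alone, and a component can perfectly well receive two of them.
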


\begin{figure}[!htb]
\centering
\begin{tikzpicture}

\draw[ultra thick] (1.5,-0.5) -- (1.5,2.5);
\draw (1.5,2.75)node[nodelabel]{$C$};

\draw (0,0) -- (1,1) -- (0,2) -- (0,0);
\draw (5,0) -- (4,1) -- (5,2) -- (5,0);
\draw (1,1) -- (4,1);
\draw (0,0) -- (5,0);
\draw (0,2) -- (5,2);
\draw (2,1) -- (2,2);
\draw (3,1) -- (3,2);

\draw (0,0)node{};
\draw (1,1)node{};
\draw (0,2)node{};
\draw (5,0)node{};
\draw (4,1)node{};
\draw (5,2)node{};
\draw (2,1)node{};
\draw (2,2)node{};
\draw (3,1)node{};
\draw (3,2)node{};

\draw (2.5,-1)node[nodelabel]{(a)};

\end{tikzpicture}
\hspace*{1in}
\begin{tikzpicture}

\draw[ultra thick] (2.5,-0.5) -- (2.5,2.5);
\draw (2.5,2.75)node[nodelabel]{$C$};

\draw (0,0) -- (1,1) -- (0,2) -- (0,0);
\draw (5,0) -- (4,1) -- (5,2) -- (5,0);
\draw (1,1) -- (4,1);
\draw (0,0) -- (5,0);
\draw (0,2) -- (5,2);
\draw (2,1) -- (2,2);
\draw (3,1) -- (3,0);

\draw (0,0)node{};
\draw (1,1)node{};
\draw (0,2)node{};
\draw (5,0)node{};
\draw (4,1)node{};
\draw (5,2)node{};
\draw (2,1)node{};
\draw (2,2)node{};
\draw (3,1)node{};
\draw (3,0)node{};

\draw (2.5,-1)node[nodelabel]{(b)};

\end{tikzpicture}
\caption{(a) A \KFF\ brick whose both $C$-contractions are \KFB; (b) A \KFB\ brick whose both
$C$-contractions are \KFF}
\label{fig:cubic-brick-separating-cut-reduction-fails}
\end{figure}

Once again, we note that neither implication of~Theorem~\ref{thm:cubic-brick-tight-cut-reduction}
holds if one replaces `tight cut' by `separating cut'.
Figure~\ref{fig:cubic-brick-separating-cut-reduction-fails}
shows an example (one for each implication); the bold line indicates a separating cut.
(The reader may find Theorem~\ref{thm:K4-free-planar-bricks} useful in order to verify the properties.)

Interestingly, however,
one can prove a similar result (only one implication) that applies to separating cuts, and to all $2$-connected cubic graphs.
We state this result (Theorem~\ref{thm:K4-decoration}) in Section~\ref{sec:K4-decorations} after
introducing the necessary terminology.

Before that, let us proceed to define our final (and most relaxed) graph containment notion.
A \mcg~$J$ is a {\em separation-deletion minor}, or simply an {\em $S$-minor},
of a \mcg~$G$ if the former may be obtained from the latter by a sequence of two types
of operations: (i) deletion of a removable ear, and (ii) contraction of a shore of a separating cut.

Observe that `bi-contracting a vertex of degree two (with two distinct neighbors)'
is a special case of `contraction of a shore of a separating cut'.
This implies the following.

\begin{prp}
\label{prp:mami-implies-Smi}
If a \mcg~$J$ is a \mami\ of a \mcg~$G$ then $J$ is an \Smi\ of~$G$. \qed
\end{prp}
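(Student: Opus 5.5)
The claim follows by chaining definitions: every operation allowed in forming a \mami\ is either already allowed in forming an \Smi, or is a special case of one. By definition, a \mami~$J$ of~$G$ is obtained from~$G$ by a finite sequence of operations of two types: (i) deletion of a removable ear, and (ii) bi-contraction of a vertex of degree two with two distinct neighbours. I will proceed by induction on the length of this sequence. Since the \Smi\ relation is transitive (concatenate the sequences), it suffices to show that each single operation of type (i) or (ii), applied to a \mcg~$G'$, produces an \Smi\ of~$G'$.

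Operations of type (i) are literally allowed in the definition of \Smi s, so nothing needs to be done there. For type (ii), let $v$ be a vertex of degree two in the \mcg~$G'$ with distinct neighbours $u$ and~$w$, and let $X:=\{u,v,w\}$. I will check that $C:=\partial(X)$ is a separating cut and that $G'/X$ is precisely the graph obtained by bi-contracting~$v$. The second point is immediate: contracting the edges $uv$ and $vw$ identifies $u,v,w$ into a single vertex, keeps every edge leaving~$X$, and loses only the two contracted edges. Any edges joining $u$ and $w$ are also deleted in $G'/X$, so I should confirm that the bi-contraction convention agrees; if it keeps them as loops, those loops are discarded anyway since graphs are loopless.

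The remaining step is to show that $C$ is separating, i.e., that both $G'/X$ and $G'/\overline{X}$ are \mc. For $G'/X$, this is the bi-contraction itself, which the paper has already observed preserves being \mc. For $G'/\overline{X}\to \overline{x}$: this graph has vertex set $\{u,v,w,\overline{x}\}$ with edges $uv$, $vw$, possibly some $uw$ edges, and the edges of $C$ from $u$ and $w$ to~$\overline{x}$. Both $u\overline{x}$ and $w\overline{x}$ edges exist: otherwise one of $u,w$, say~$u$, would have all its neighbours in~$X$, so $\{u,v,w\}$ would be separated from the rest of the graph by $w$ alone, contradicting $2$-connectivity of \mcg s of order at least four (the small cases are checked directly). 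Each edge then lies in a perfect matching of this four-vertex graph: $uv,w\overline{x}$; $vw,u\overline{x}$; and any $uw$ edge lies in none, but $v$ has degree two, so $uw$ cannot lie in a perfect matching of~$G'$ either, so such edges are absent. The graph is connected, so $C$ is separating. This small verification of $G'/\overline{X}$ is the only step I expect to need any care.
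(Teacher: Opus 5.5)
Your proposal is correct and matches the paper's argument, which simply observes that bi-contracting $v$ is the same as contracting the shore $X=\{u,v,w\}$ of the cut $\partial(X)$, and that this cut is separating. Your direct check that $G'/\overline{X}$ is matching covered is valid; the paper's shortcut (stated in Section~2.1) is that $\partial(X)$ is in fact tight, since every perfect matching matches $v$ inside $X$ and hence matches exactly one of $u,w$ across the cut, and tight cuts are separating.
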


It follows from Corollary~\ref{cor:comi-implies-mami} that if a \mcg~$J$ is a \comi\ of a \mcg~$G$
then $J$ is an \Smi\ of~$G$. As we have noted earlier, in case the ``minor'' is subcubic, the
notion of {\comi}s coincides with that of {\mami}s. However, this is not the case with {\Smi}s.

For instance: (i) it follows from Lov{\'a}sz's Theorem (\ref{thm:nonbip-mcg-K4-prism-comi})
that $K_4$ is an \Smi\ of every nonbipartite \mcg\ (since $K_4$ is an \Smi\ of~$\overline{C_6}$);
however, $K_4$ is definitely not a \comi\ of every nonbipartite \mcg;
(ii) as mentioned earlier, $K_{3,3}$ is not a \comi\ of 
the graph shown in Figure~\ref{fig:cubic-brick-tight-cut-reduction}a; however, $K_{3,3}$
is an \Smi. (Interestingly, $K_{3,3}$ is a \comi\ of the graph shown in
Figure~\ref{fig:cubic-brick-tight-cut-reduction}.)

The notion of {\Smi}s was introduced by Carvalho, Lucchesi and Murty~\cite{clm12}.
On the other hand, for `weak {\mami}s': the notion was introduced by
Fischer, Little and Rendl \cite{fili01,lrf02},
and the term by Norine and Thomas \cite{noth08};
however, they did not restrict themselves to {\mcg}s.
As mentioned in
\cite{clm12}\footnote{It seems that there is a typo in \cite{clm12}. They cite \cite{noth07} instead of \cite{noth08}.},
the two notions (`weak {\mami}s' and `{\Smi}s') coincide when restricted to {\mcg}s.

Next, we describe the operation of `splicing' which is the reverse of the operation:
`contraction of a shore of a separating cut'. We refer the reader to \cite{lckm18} for
further discussion on the topic.

\subsection{Splicing and {\KFdec}s}
\label{sec:K4-decorations}

For $i \in \{1,2\}$, let $G_i$ denote a matching covered graph with a specified vertex $v_i$ such
that the degree of $v_1$ (in $G_1$) is the same as the degree of $v_2$ (in $G_2$); furthermore,
let $\pi$ denote a bijection between $\partial_{G_1}(v_1)$ and $\partial_{G_2}(v_2)$.
We now define the {\em splicing of $G_1$ and $G_2$, at $v_1$ and $v_2$ respectively, as per the bijection $\pi$},
denoted by $(G_1 \odot G_2)_{v_1,v_2,\pi}$, as the following graph $G$. The graph $G$
is obtained from $(G_1 - v_1) \cup (G_2 - v_2)$ by joining, for each edge $e$ in $\partial_{G_1}(v_1)$,
the end of $e$ in $V(G_1)-v_1$ with the end of $\pi(e)$ in $V(G_2)-v_2$. It is easily observed
that the graph $G$ is also \mc.

Note that $V(G)=(V(G_1)-v_1) \cup (V(G_2)-v_2)$; we refer to $C:=\partial_G(V(G_1)-v_1)$
as the {\em corresponding splicing cut}.
Observe that the $C$-contractions of $G$ are precisely $G_1$~and~$G_2$.
This proves the first part of the following statement; the second part holds by definition of {\Smi}s.

\begin{prp}
\label{prp:splicing-and-S-minors}
Let $G$ denote a \mcg\ obtained by splicing two {\mcg}s, say $G_1$~and~$G_2$,
and let $C$ denote the corresponding splicing cut.
Then $C$ is a separating cut of~$G$, and each of $G_1$ and $G_2$ is an \Smi\ of~$G$. \qed
\end{prp}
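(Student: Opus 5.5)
Both parts follow from the definitions and one check that $C$ is a separating cut, which requires that the two $C$-contractions of~$G$ be matching covered.

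\emph{Identifying the $C$-contractions.} Let $X := V(G_1)-v_1$, so $\overline{X} = V(G_2)-v_2$ and $C=\partial_G(X)$.

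Consider shrinking $\overline{X}$ to a single vertex~$x$. All edges with both ends in $\overline{X}$ are deleted; these are exactly the edges of $G_2-v_2$. The edges of $G$ with both ends in $X$ are exactly those of $G_1-v_1$. Each edge of~$C$ was constructed from some $e\in\partial_{G_1}(v_1)$ by joining the end of $e$ in~$X$ with the end of $\pi(e)$ in~$\overline{X}$. After shrinking, this edge joins the end of $e$ in~$X$ with~$x$.

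Thus $\pi$ induces a bijection between $C$ and $\partial_{G_1}(v_1)$ that preserves the ends in~$X$. Hence $G/\overline{X}\rightarrow x$ is isomorphic to $G_1$, with $x$ corresponding to~$v_1$. The symmetric argument, via $\pi^{-1}$, shows $G/X \cong G_2$. Parallel edges pose no difficulty, since edges are tracked individually through~$\pi$.

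\emph{First part.} By hypothesis $G_1$ and $G_2$ are matching covered. Hence both $C$-contractions of~$G$ are matching covered, and $C$ is a separating cut by definition.

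Strictly, the definition of a separating cut is stated for a matching covered graph~$G$. So I would also confirm that $G$ is matching covered, which the paper notes is easily observed. For a self-contained check:
\begin{itemize}
\item Let $e\in E(G)$. Suppose first that $e$ corresponds to an edge of~$G_1$. Choose a perfect matching $M_1$ of $G_1$ containing it, and let $f\in\partial_{G_1}(v_1)$ be the edge of $M_1$ at~$v_1$.
\item Choose a perfect matching $M_2$ of $G_2$ containing $\pi(f)$, which exists because $G_2$ is matching covered.
\item Then $M_1$ and $M_2$ restrict to $X$ and $\overline{X}$ respectively, and they agree on the single cut edge corresponding to $f$ and $\pi(f)$. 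Their union is therefore a perfect matching of~$G$ containing~$e$.
\item Edges corresponding to $G_2$ are handled symmetrically.
\item $G$ is connected because each $G_i-v_i$ attaches to the cut edges, using connectivity of $G_i$ together with $\deg v_i\ge 1$.
\end{itemize}

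\emph{Second part.} Each of $G_1$ and $G_2$ is obtained from $G$ by a single contraction of a shore of the separating cut~$C$. By the definition of {\Smi}s, each is therefore an \Smi\ of~$G$.

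The only point needing care is the isomorphism bookkeeping through~$\pi$ in the first step; everything else is immediate.
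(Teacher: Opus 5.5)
Your proof is correct and takes the same approach as the paper: you identify the two $C$-contractions of $G$ with $G_1$ and $G_2$, which makes $C$ separating, and you then get each $G_i$ as an $S$-minor of $G$ directly from the definition. Your extra check that $G$ is matching covered is unnecessary, because the statement assumes it. If you keep the check, note that the connectivity step needs each $G_i-v_i$ to be connected, which holds because a matching covered graph of order at least four is $2$-connected; connectivity of $G_i$ alone is not enough.
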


It is evident that the splicing of two graphs, at specified vertices, depends on the chosen bijection.
For instance, the pentagonal prism as well as the Petersen graph may both be obtained by splicing two
copies of the odd wheel $W_5$ (shown in Figure~\ref{fig:mami-need-not-imply-comi}) at their hubs.

We will be particularly interested in splicing cubic graphs where one of the two graphs being spliced is $K_4$. Observe that,
in this special case, the chosen bijection does not matter.
In fact, splicing any cubic graph $G_1$ and $G_2:=K_4$, at vertices $v_1$ and $v_2$,
is the same as replacing $v_1$ in $G_1$
by a triangle (i.e., $3$-cycle) in order to obtain another cubic graph $G$
(which is uniquely determined by $G_1$ and $v_1$).

For instance, the graph shown in Figure~\ref{fig:cubic-brick-tight-cut-reduction}a
is the (unique) result of splicing $K_{3,3}$ and $K_4$,
and we denote it simply by $K_{3,3} \odot K_4$.
That being said,
unless the graph~$G_1$ is vertex-transitive, the choice of the vertex~$v_1$ matters.
For instance, splicing $K_{3,3} \odot K_4$ and $K_4$ may lead to two
nonisomorphic graphs (depending on the choice of~$v_1$):
either the graph shown in Figure~\ref{fig:cubic-brick-tight-cut-reduction}b,
or the graph (of order~$10$) shown in Figure~\ref{fig:solid-K4-decorations-of-K33}.

Now, let $J$ be a $2$-connected cubic graph.
For each (not necessarily proper) subset~$T$ of~$V(J)$, we may obtain a (unique) graph,
by replacing each vertex in~$T$ by a triangle.
We denote this graph by $J^T \odot K_4$, and call it a {\em \KFdec\ of~$J$} --- since it
may instead be obtained from $J$ by splicing with copies of $K_4$ repeatedly (at each of the vertices in~$T$).
The latter viewpoint immediately leads us to the following consequence of
Proposition~\ref{prp:splicing-and-S-minors}.

\begin{cor}
  \label{cor:K4-decorations-and-S-minors}
\breaksta
  A $2$-connected cubic graph is an \Smi\ of each of its {\KFdec}s.~\qed
\end{cor}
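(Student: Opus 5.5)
Since $J^T \odot K_4$ arises from $J$ by splicing with copies of $K_4$, one vertex of $T$ at a time, I will argue by induction on $|T|$. When $T=\emptyset$ the decoration is $J$ itself, and $J$ is trivially an \Smi\ of itself: the empty sequence of operations suffices. For $|T|\ge 1$, I pick any $t\in T$ and set $T':=T-t$. Replacing each vertex of $T'$ by a triangle touches neither $t$ nor its degree, since every vertex still has degree three. So $G':=J^{T'}\odot K_4$ is a cubic graph that still contains $t$, and replacing $t$ by a triangle in $G'$ gives exactly $G:=J^T\odot K_4$. That is, $G=(G'\odot K_4)_{t,v_2,\pi}$ for any bijection~$\pi$.

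To apply Proposition~\ref{prp:splicing-and-S-minors}, I first need $G'$ to be matching covered. A cubic graph is matching covered if and only if it is $2$-connected, so it is enough to show that replacing a vertex by a triangle preserves $2$-connectedness. This is a short check. The new triangle is $2$-connected. If some vertex $x$ of $G$ were a cut vertex, then either $x$ lies outside the triangle, and contracting the triangle back would make $x$ a cut vertex of the smaller graph, or $x$ lies on the triangle, and then the original vertex would have been a cut vertex. Either case contradicts $2$-connectedness of the smaller graph. Iterating this observation shows that every decoration $J^{S}\odot K_4$ is a $2$-connected cubic graph, and hence matching covered.

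Proposition~\ref{prp:splicing-and-S-minors} now tells us that $G'$ is an \Smi\ of $G$, with the splicing cut around the new triangle serving as the separating cut, which is consistent with Proposition~\ref{prp:cubic-mcg-3cuts-are-separating-cuts}. By the induction hypothesis, $J$ is an \Smi\ of $G'$. The \Smi\ relation is transitive, because it is defined by sequences of operations and two such sequences can simply be concatenated. Hence $J$ is an \Smi\ of $G$.

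The only step that needs any care is confirming that each intermediate graph is matching covered, so that the splicing proposition genuinely applies at every stage. The $2$-connectivity check above handles this. Every other step follows directly from the definitions.
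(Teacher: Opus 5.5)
Your proof is correct and follows the paper's route. The paper views $J^T\odot K_4$ as $J$ spliced with copies of $K_4$, one vertex of $T$ at a time, and applies Proposition~\ref{prp:splicing-and-S-minors} at each step together with transitivity, exactly as your induction on $|T|$ does. Your $2$-connectivity check is sound but not needed, since the paper already notes that splicing two matching covered graphs yields a matching covered graph.
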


Given a $2$-connected cubic graph~$J$, a fixed set $T \subseteq V(J)$,
and the corresponding \KFdec\ $L:=J^T \odot K_4$,
we may regard the set $\overline{T}:=V(J)-T$ to be a subset of~$V(L)$; we refer to these as the
{\em original vertices of~$L$}, and we refer to the remaining $3|T|$ vertices as {\em decoration vertices of~$L$}.
Furthermore, we may fix a three-to-one correspondence between the decoration vertices of~$L$ and the set $T$,
say $\tau:V(L)-\overline{T} \rightarrow T$.
For instance, if $J:=K_{3,3}$ and $T:=\{a_1,a_3\}$, as per the labeling in Figure~\ref{fig:solid-K4-decorations-of-K33},
then $L$ has four original vertices (namely, $b_1,b_2,b_3$ and $a_2$)
and six decoration vertices; the six decoration vertices induce two copies of~$C_3$ and we may fix a correspondence~$\tau$
so that the vertices of one copy of $C_3$ correspond to $a_1$, and the vertices of the other copy of~$C_3$
correspond to $a_3$.

By the {\em list of {\KFdec}s} of a $2$-connected cubic graph~$J$, denoted by $K_4(J)$,
we mean the set of all non-isomorphic $K_4$-decorations of~$J$.
In particular, $K_4(J) := \{ J^T \odot K_4 : T \subseteq V(J)\}$
is a finite set of $2$-connected cubic graphs.
Observe that, in theory, the set $K_4(J)$ may contain $2^{|V(J)|}$ graphs;
however, in practice, the graphs (i.e., $J$) typically encountered have many symmetries,
and the set $K_4(J)$ turns out to be much smaller.

For instance, using the symmetries of $K_{3,3}$, the reader may easily verify
that the set $K_4(K_{3,3})$ comprises only ten graphs;
four of these are shown in Figure~\ref{fig:solid-K4-decorations-of-K33}. (For a bipartite graph~$G$,
we use the notation $G[A,B]$ to denote its color classes $A$~and~$B$.)
Henceforth, for $K_{3,3}[A,B]$, we shall
use the vertex-labeling depicted in Figure~\ref{fig:solid-K4-decorations-of-K33}.

\begin{figure}[!htb]
\centering
\begin{tikzpicture}

\draw (0,0) -- (0,2);
\draw (0,0) -- (2,2);
\draw (0,0) -- (4,2);

\draw (2,0) -- (0,2);
\draw (2,0) -- (2,2);
\draw (2,0) -- (4,2);

\draw (4,0) -- (0,2);
\draw (4,0) -- (2,2);
\draw (4,0) -- (4,2);

\draw (0,0)node{}node[nodelabel,below]{$a_1$};
\draw (2,0)node{}node[nodelabel,below]{$a_2$};
\draw (4,0)node{}node[nodelabel,below]{$a_3$};

\draw (0,2)node{}node[nodelabel,above]{$b_1$};
\draw (2,2)node{}node[nodelabel,above]{$b_2$};
\draw (4,2)node{}node[nodelabel,above]{$b_3$};

\draw (2,-1.5)node[nodelabel]{$K_{3,3}$};

\end{tikzpicture}
\hspace*{1in}
\begin{tikzpicture}

\draw (0,0) -- (0,2);
\draw (0,0) -- (2,2);
\draw (0,0) -- (4,2);

\draw (2,0.3) -- (1.6,-0.3) -- (2.4,-0.3) -- (2,0.3);
\draw (1.6,-0.3) -- (0,2);
\draw (2,0.3) -- (2,2);
\draw (2.4,-0.3) -- (4,2);

\draw (4,0) -- (0,2);
\draw (4,0) -- (2,2);
\draw (4,0) -- (4,2);

\draw (0,0)node{};
\draw (4,0)node{};

\draw (2,0.3)node{};
\draw (1.6,-0.3)node{};
\draw (2.4,-0.3)node{};

\draw (0,2)node{};
\draw (2,2)node{};
\draw (4,2)node{};

\draw (2,-1.5)node[nodelabel]{$K_{3,3} \odot K_4$};

\end{tikzpicture}

\vspace*{0.2in}

\begin{tikzpicture}

\draw (0,0.3) -- (-0.4,-0.3) -- (0.4,-0.3) -- (0,0.3);
\draw (-0.4,-0.3) -- (0,2);
\draw (0,0.3) -- (2,2);
\draw (0.4,-0.3) -- (4,2);

\draw (2,0) -- (0,2);
\draw (2,0) -- (2,2);
\draw (2,0) -- (4,2);

\draw (4,0.3) -- (3.6,-0.3) -- (4.4,-0.3) -- (4,0.3);
\draw (3.6,-0.3) -- (0,2);
\draw (4,0.3) -- (2,2);
\draw (4.4,-0.3) -- (4,2);

\draw (0,0.3)node{};
\draw (-0.4,-0.3)node{};
\draw (0.4,-0.3)node{};

\draw (2,0)node{};

\draw (4,0.3)node{};
\draw (3.6,-0.3)node{};
\draw (4.4,-0.3)node{};

\draw (0,2)node{};
\draw (2,2)node{};
\draw (4,2)node{};

\draw (2,-1.5)node[nodelabel]{$K_{3,3}^{\{a_1,a_3\}} \odot K_4$};
\end{tikzpicture}
\hspace*{1in}
\begin{tikzpicture}

\draw (0,0.3) -- (-0.4,-0.3) -- (0.4,-0.3) -- (0,0.3);
\draw (-0.4,-0.3) -- (0,2);
\draw (0,0.3) -- (2,2);
\draw (0.4,-0.3) -- (4,2);

\draw (2,0.3) -- (1.6,-0.3) -- (2.4,-0.3) -- (2,0.3);
\draw (1.6,-0.3) -- (0,2);
\draw (2,0.3) -- (2,2);
\draw (2.4,-0.3) -- (4,2);

\draw (4,0.3) -- (3.6,-0.3) -- (4.4,-0.3) -- (4,0.3);
\draw (3.6,-0.3) -- (0,2);
\draw (4,0.3) -- (2,2);
\draw (4.4,-0.3) -- (4,2);

\draw (0,0.3)node{};
\draw (-0.4,-0.3)node{};
\draw (0.4,-0.3)node{};

\draw (2,0.3)node{};
\draw (1.6,-0.3)node{};
\draw (2.4,-0.3)node{};

\draw (4,0.3)node{};
\draw (3.6,-0.3)node{};
\draw (4.4,-0.3)node{};

\draw (0,2)node{};
\draw (2,2)node{};
\draw (4,2)node{};

\draw (2,-1.5)node[nodelabel]{$K_{3,3}^{A} \odot K_4$};
\end{tikzpicture}
\vspace*{-0.4in}
\caption{Four {\KFdec}s of $K_{3,3}[A,B]$ where $A:=\{a_1,a_2,a_3\}$ and $B:=\{b_1,b_2,b_3\}$}
\label{fig:solid-K4-decorations-of-K33}
\end{figure}

Recall that, for a finite set of {\mcg}s, say~$\mathcal{J}$, we say that a \mcg~$G$ is $\mathcal{J}$-free
if $G$ is \JF\ for each graph $J \in \mathcal{J}$; otherwise, $G$ is $\mathcal{J}$-based.
We are now ready to state the promised result that is similar to (one direction of)
Theorem~\ref{thm:cubic-brick-tight-cut-reduction}, but applies to all $2$-connected cubic graphs (instead of just cubic bricks)
and to separating cuts (instead of just tight cuts).

\begin{thm}
\label{thm:K4-decoration}
{\sc [$K_4$-decoration Theorem]}
Let $J$ denote any $2$-connected cubic graph.
For any separating cut $C$ of a \mcg~$G$,
if the graph $G$ is $K_4(J)$-free
then both of its $C$-contractions are $K_4(J)$-free.
\end{thm}

The reader may find it instructive to verify the above statement for some of the examples
shown in Figures~\ref{fig:cubic-brick-tight-cut-reduction}~and~\ref{fig:solid-K4-decorations-of-K33}.
For instance, if $G$ is the graph shown in Figure~\ref{fig:cubic-brick-tight-cut-reduction}b,
and if $X$ is the (unique) set of vertices such that the induced subgraph $G[X]$ is isomorphic to $C_3$,
then $G/X$ is $K_4(K_{3,3})$-based; consequently, $G$ is also $K_4(K_{3,3})$-based.
(Note that $\partial(X)$ is a separating cut in~$G$ that is not tight.)

A proof of the \KFdec\ Theorem, using double induction,
was found by one of the authors, Cl{\'a}udio L. Lucchesi, in 2012; this has not been published anywhere.
In Section~\ref{sec:K4-decoration-thm-proof},
we present a different and direct proof of the \KFdec\ Theorem
that relies on the \mbox{$\Theta$ - $K_4$} Theorem (\ref{thm:mcg-theta-K4-comi}).

Our Main Theorem (\ref{thm:conformal-minors-vs-S-minors-cubic}) relies heavily on the \KFdec\ Theorem.
In the next section, we prove it (assuming the \KFdec\ Theorem).

\subsection{Proof of the Main Theorem}
\label{sec:S-minors}

Before proving the Main Theorem (which applies to a finite set of $2$-connected cubic graphs), we
prove the key lemma that applies to a fixed $2$-connected cubic graph.

\begin{lem}
\label{lem:S-minor-implies-decorated-conformal-minor}
Let $H$ denote any $2$-connected cubic graph, and let $G$ be a \mcg. If $H$ is an \Smi\ of~$G$
then some \KFdec\ of~$H$ is a \comi\ of~$G$.
\end{lem}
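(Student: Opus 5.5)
We argue by contradiction, using the \KFdec\ Theorem (\ref{thm:K4-decoration}) as the engine. Suppose that $H$ is an \Smi\ of~$G$, yet no \KFdec\ of~$H$ is a \comi\ of~$G$; in our notation, $G$ is $K_4(H)$-free. By definition of {\Smi}s, there is a sequence $G=G_0, G_1, \dots, G_k=H$ of {\mcg}s in which each $G_{i+1}$ is obtained from $G_i$ by one of two operations. Either $G_{i+1}$ arises by deleting a removable ear of~$G_i$, or $G_{i+1}$ is a $C$-contraction of~$G_i$ for some separating cut~$C$ of~$G_i$. We shall show by induction on~$i$ that every $G_i$ is $K_4(H)$-free. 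Taking $i=k$ then says that $H$ itself is $K_4(H)$-free. This is absurd: $H=H^{\emptyset}\odot K_4$ belongs to $K_4(H)$, and $H$ is trivially a \comi\ of itself.

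The inductive step splits into two cases according to the operation used. If $G_{i+1}$ is a $C$-contraction of $G_i$ along a separating cut~$C$, then the \KFdec\ Theorem applied with $J:=H$ yields that $G_{i+1}$ is $K_4(H)$-free whenever $G_i$ is. If instead $G_{i+1}=G_i-R$ for a removable ear~$R$, then $G_{i+1}$ is a conformal \mc\ subgraph of~$G_i$. Conformality is immediate for a single ear, since the internal vertices of an odd path can be perfectly matched along it, and the same holds for a double ear. Moreover, a conformal subgraph of a conformal subgraph is again conformal, because the two matchings combine. Hence if some $L\in K_4(H)$ were a \comi\ of~$G_{i+1}$, via a conformal bi-subdivision $F$ of~$L$ in~$G_{i+1}$, then $F$ would also be a conformal subgraph of~$G_i$, and so $L$ would be a \comi\ of~$G_i$. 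Equivalently, we may note that \comi\ containment is transitive, using the operational characterization of {\comi}s stated earlier. Either way, $G_{i+1}$ is $K_4(H)$-free.

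The only nontrivial ingredient is the separating-cut case, which is exactly the content of the \KFdec\ Theorem; everything else is bookkeeping. That bookkeeping has two parts: checking that ear deletion preserves freeness, and observing that the decoration family $K_4(H)$ contains $H$ itself. So the main obstacle lies entirely in the (deferred) proof of the \KFdec\ Theorem. Here the lemma is a formal consequence of it, obtained by contraposition along the \Smi\ sequence.
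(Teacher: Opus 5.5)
Your proof is correct and follows the paper's argument. The paper inducts along the \Smi\ sequence and shows that $K_4(H)$-basedness propagates from $H$ up to $G$: ear deletions are handled directly, and separating-cut contractions via the \KFdec\ Theorem. You run the contrapositive of the same induction, propagating $K_4(H)$-freeness down from $G$ to $H$, and your explicit check that ear deletion preserves conformality fills in what the paper dismisses as ``clearly''.
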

\begin{proof}
Assume that $H$ is an \Smi\ of~$G$. Thus there exists a sequence $(G_1, G_2, \dots, G_r)$,
where $G_1:=H$ and $G_r:=G$, so that, for each $2 \leq i \leq r$,
the graph~$G_{i-1}$ is obtained from $G_i$ either by deleting a removable ear,
or by contracting the shore of a separating cut.
Note that $H$ is an \Smi\ of each graph in the sequence.
If $r=1$ then $H$ is isomorphic to~$G$; ergo a \comi\ of~$G$.

Now suppose that $r \geq 2$, and
assume inductively that the desired conclusion holds for $G_{r-1}$. Thus,
some \KFdec\ of~$H$, say~$J$, is a \comi\ of~$G_{r-1}$. If $G_{r-1}$ is obtained from $G_r$
by deleting a removable ear then clearly $J$ is a \comi\ of~$G_r$.
Otherwise, $G_{r-1}$ is obtained from~$G_r$ by contracting the shore of a separating cut, say~$C$;
in particular, $G_{r-1}$ is a $C$-contraction of~$G_r$.
By the \KFdec\ Theorem (\ref{thm:K4-decoration}),
some \KFdec\ of~$H$, say $L$, is a \comi\ of~$G_r$.
In either case, we have shown that some \KFdec\ of~$H$ is a \comi\ of~$G$.
\end{proof}

We are now ready to prove the Main Theorem (\ref{thm:conformal-minors-vs-S-minors-cubic}).
In fact, we prove a refined version (\ref{thm:conformal-minors-vs-S-minors-cubic-long-version})
that also explains how the finite list of forbidden {\comi}s
is derived from the finite list of forbidden {\Smi}s. In order to do so, we generalize the notation
$K_4(\cdot)$ from a single graph to a finite set of graphs. For a finite set of $2$-connected cubic graphs,
say $\mathcal{H}$, we define $K_4(\mathcal{H})$ as follows.

\[
K_4(\mathcal{H}) := \displaystyle\bigcup_{H \in \mathcal{H}} K_4(H)
\]

\begin{thm}
\label{thm:conformal-minors-vs-S-minors-cubic-long-version}
{\sc [Main Theorem]}
Let $\mathcal{H}$ denote a finite set of $2$-connected cubic graphs, and
let $\mathcal{G}$ denote a class of {\mcg}s such that $G \in \mathcal{G}$
if and only if, for each $H \in \mathcal{H}$, the graph $H$ is not an \Smi\ of~$G$.
Then $G \in \mathcal{G}$ if and only if $G$ is $K_4(\mathcal{H})$-free.
\end{thm}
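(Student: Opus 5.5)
The plan is to prove the two implications separately. Each reduces, via Lemma~\ref{lem:S-minor-implies-decorated-conformal-minor} and Corollary~\ref{cor:K4-decorations-and-S-minors}, to the statement that an \Smi\ of an \Smi\ is again an \Smi.

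\emph{Membership in $\mathcal{G}$ implies $K_4(\mathcal{H})$-freeness.} I would argue by contrapositive. Suppose $G$ is $K_4(\mathcal{H})$-based. Then there is some $H \in \mathcal{H}$ and some $T \subseteq V(H)$ such that $L := H^T \odot K_4$ is a \comi\ of~$G$. By Corollary~\ref{cor:comi-implies-mami} and Proposition~\ref{prp:mami-implies-Smi}, $L$ is an \Smi\ of~$G$. By Corollary~\ref{cor:K4-decorations-and-S-minors}, $H$ is an \Smi\ of~$L$.

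The only point needing care is that the \Smi\ relation is transitive. This is immediate from the definition. If $H$ is obtained from $L$ by a sequence of the two allowed operations, and $L$ is obtained from $G$ by such a sequence, then concatenating the two sequences obtains $H$ from~$G$. Each intermediate graph is \mc, as the definition requires. Hence $H$ is an \Smi\ of~$G$, so $G \notin \mathcal{G}$.

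\emph{$K_4(\mathcal{H})$-freeness implies membership in $\mathcal{G}$.} Again I would use the contrapositive. Suppose $G \notin \mathcal{G}$. Then some $H \in \mathcal{H}$ is an \Smi\ of~$G$. Since $H$ is a $2$-connected cubic graph, Lemma~\ref{lem:S-minor-implies-decorated-conformal-minor} gives some \KFdec\ $H^T\odot K_4$ of~$H$ that is a \comi\ of~$G$. This graph belongs to $K_4(H) \subseteq K_4(\mathcal{H})$, so $G$ is $K_4(\mathcal{H})$-based.

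I expect no real obstacle. All the substantive work is hidden in the $K_4$-decoration Theorem (\ref{thm:K4-decoration}), which is used through Lemma~\ref{lem:S-minor-implies-decorated-conformal-minor}. The only bookkeeping is the transitivity of \Smi s, together with the fact that $K_4(\mathcal{H})$ is a finite set of cubic graphs, so the conclusion has the advertised form. Finiteness holds because each $K_4(H)$ has at most $2^{|V(H)|}$ members. Cubicity holds because replacing a cubic vertex by a triangle preserves cubicity, and $2$-connectedness is preserved as well.
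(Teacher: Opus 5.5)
Your proposal is correct and follows the paper's proof essentially step for step. In one direction you combine conformal minor $\Rightarrow$ $S$-minor, Corollary~\ref{cor:K4-decorations-and-S-minors} and transitivity of $S$-minors. In the other you apply Lemma~\ref{lem:S-minor-implies-decorated-conformal-minor}, whose substance is the $K_4$-decoration Theorem.
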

\begin{proof}
We let $\mathcal{H}$ and $\mathcal{G}$ be as described in the theorem statement.

First suppose that $G$ is $K_4(\mathcal{H})$-based; i.e.,
there exists a graph $J \in K_4(\mathcal{H})$ that is a \comi\ of~$G$.
By Proposition~\ref{prp:mami-implies-Smi}, $J$ is an \Smi\ of~$G$.
By definition, $J$ is a \KFdec\ of some graph $H \in \mathcal{H}$.
By Corollary~\ref{cor:K4-decorations-and-S-minors}, $H$ is an \Smi\ of~$J$.
It follows from transitivity that $H$ is an \Smi\ of~$G$. Thus $G \notin \mathcal{G}$.

Conversely, suppose that $G \notin \mathcal{G}$. By hypothesis, there exists a graph $H \in \mathcal{H}$
that is an \Smi\ of~$G$. By Lemma~\ref{lem:S-minor-implies-decorated-conformal-minor},
some \KFdec\ of~$H$, say~$J$, is a \comi\ of~$G$. By definition, $J \in K_4(\mathcal{H})$;
whence $G$ is $K_4(\mathcal{H})$-based.

This completes the proof of Theorem~\ref{thm:conformal-minors-vs-S-minors-cubic-long-version}.
\end{proof}

Observe that, for a finite set of $2$-connected cubic graphs, say~$\mathcal{H}$,
the set $K_4(\mathcal{H})$ has at most
$\sum_{H \in \mathcal{H}} 2^{|V(H)|}$ graphs.
Consequently, in theory, the number of forbidden {\comi}s may be exponential
in terms of the number of forbidden {\Smi}s. However, as we will see in Section~\ref{sec:applications},
in practice, the numbers are somewhat comparable due to two reasons (as follows). Firstly, most
of the forbidden minors encountered have various symmetries;
consequently, many of their {\KFdec}s turn out to be isomorphic.
Secondly, the particular class of graphs (one seeks to characterize) may impose additional constraints, and
often allows one to omit many of the {\KFdec}s;
this point will become clearer in Section~\ref{sec:applications} via concrete examples.

\subsection{Organization of this paper}

In Section~\ref{sec:proofs}, we prove the \KFdec\ Theorem (\ref{thm:K4-decoration}).
In order to prove this, we will first characterize {\mcg}s that have precisely three {\pema}s,
and use it to deduce the $\Theta$ - $K_4$ Theorem (\ref{thm:mcg-theta-K4-comi}).

In Section~\ref{sec:applications}, we will provide applications of our Main Theorem
to the theory of \pfaff\ orientations. In particular, we deduce known forbidden \comi\
characterizations of \pfaff\ `near-bipartite' graphs, and of \pfaff\ `solid' graphs, using their
respective known forbidden \Smi\ characterizations.

\section{Proofs}
\label{sec:proofs}

\subsection{Graphs with three perfect matchings}

Observe that each of $\Theta$ and $K_4$ has precisely three {\pema}s; the same holds for their {\bisub}s.
In this section, we first present a proof of the converse --- these are the only {\mcg}s that have precisely three {\pema}s.
To this end, the following observation will come in handy.

\begin{lem}
\label{lem:even-conformal-bicycle}
If a graph~$G$ has two vertex-disjoint even cycles $C_1$~and~$C_2$ such that $C_1 \cup C_2$ is a conformal subgraph,
then $G$ has at least four {\pema}s. \qed
\end{lem}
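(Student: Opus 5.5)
The plan is to build four perfect matchings directly from a perfect matching of the complement together with the two alternating choices on each cycle.

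\emph{Setup.} Since $C_1 \cup C_2$ is a conformal subgraph of~$G$, the graph $G - V(C_1 \cup C_2) = G - V(C_1) - V(C_2)$ is matchable. Fix a perfect matching~$N$ of it.

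\emph{Matchings on the cycles.} Each $C_i$ is an even cycle, so its edge set splits into exactly two disjoint perfect matchings of~$C_i$; call them $M_i$ and $M_i'$. Both are nonempty, because a cycle has at least two vertices. Hence $M_i \neq M_i'$.

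\emph{Combining.} The vertex sets $V(C_1)$, $V(C_2)$ and $V(G) - V(C_1) - V(C_2)$ partition $V(G)$. Therefore, for each choice $X_1 \in \{M_1, M_1'\}$ and $X_2 \in \{M_2, M_2'\}$, the set $N \cup X_1 \cup X_2$ is a perfect matching of~$G$. This gives four candidates.

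\emph{Distinctness.} The four matchings are pairwise distinct. Restricting any of them to the edges of~$C_1$ recovers~$X_1$, and restricting to the edges of~$C_2$ recovers~$X_2$. Edges of~$C_2$ cannot lie in~$C_1$ because the cycles are vertex-disjoint, and no edge of~$N$ lies in either cycle since $N$ avoids their vertices. So distinct pairs $(X_1, X_2)$ yield distinct matchings.

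Parallel edges do not affect the argument: a $2$-cycle (two parallel edges) is still an even cycle, and its two edges serve as its two perfect matchings. I see no real obstacle here; the only point needing care is the distinctness check, which rests on the vertex-disjointness of $C_1$ and $C_2$.
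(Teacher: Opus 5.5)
Your proof is correct and is the routine argument the paper leaves to the reader: the lemma is stated with \verb|\qed| and no proof. You fix a perfect matching of $G - V(C_1) - V(C_2)$ and combine it with either of the two perfect matchings on each even cycle; vertex-disjointness of $C_1$ and $C_2$ makes the four resulting perfect matchings distinct, and $2$-cycles are handled correctly.
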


\smallskip
Let $G$ be a \mcg, and let $v_0$ denote a vertex of degree two that has two distinct neighbors, say~$v_1$~and~$v_2$.
Let $H$ denote the graph obtained from $G$ by contracting the two edges incident with $v_0$; we say that $H$ is obtained form $G$ by {\em bi-contracting} $v_0$. Note that $\partial(X)$, where $X:=\{v_0,v_1,v_2\}$, is a tight cut of $G$; consequently, $H$ is \mc. It is easily verified that the {\pema}s of $G$, and those of~$H$, are in bijective correspondence.

\smallskip
Given a \mcg~$G$, one may repeatedly apply the bi-contraction operation in order to obtain a \mcg\ that
is devoid of degree two vertices with two neighbors; this graph, denoted $\widehat{G}$, is uniquely determined up to isomorphism (see \cite[Proposition 3.11]{clm05}) and is called the {\em retract} of~$G$.
This, coupled with the observation from the preceding paragraph, yields the following.

\begin{prp}
\label{prp:pema-correspondence-retract}
For every \mcg~$G$, there is a bijective correspondence between the sets of {\pema}s of~$G$ and those of $\widehat{G}$.
Consequently, $G$ and $\widehat{G}$ have an equal number of {\pema}s. \qed
\end{prp}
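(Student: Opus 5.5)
The statement to prove is Proposition~\ref{prp:pema-correspondence-retract}: $G$ and $\widehat{G}$ have perfect matchings in bijective correspondence. The plan is induction on the number of bi-contractions used to obtain $\widehat{G}$ from $G$. The whole proof then reduces to one bi-contraction step, since a composition of bijections is a bijection.

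\emph{Inductive step.} Suppose $G$ has a vertex $v_0$ of degree two with distinct neighbors $v_1,v_2$, and let $H$ be obtained by bi-contracting $v_0$. Write $x$ for the contracted vertex, so $X=\{v_0,v_1,v_2\}$ becomes $x$. Every edge of $H$ incident with $x$ corresponds to an edge of $\partial_G(X)$, and every other edge of $H$ is an edge of $G$ with both ends outside $X$. The induction hypothesis then applies to $H$, whose retract is $\widehat{G}$ by \cite[Proposition 3.11]{clm05}, since the retract is well defined up to isomorphism.

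\emph{Defining the map.} Let $M$ be a perfect matching of $G$. Then $v_0$ is matched to exactly one of $v_1,v_2$, say $v_1$. The vertex $v_2$ must be matched by an edge $e\notin\{v_0v_1,v_0v_2\}$. This edge $e$ cannot go to $v_1$, which is already covered, so $e$ lies in $\partial(X)$. Moreover $M\cap\partial(X)=\{e\}$, because $v_0$ and $v_1$ are matched within $X$. Hence $\partial(X)$ is tight, and $\phi(M):=M-\{v_0v_1,v_0v_2\}$ is a perfect matching of $H$, with $x$ covered by the image of $e$.

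\emph{Inverse map.} Let $N$ be a perfect matching of $H$, and let $f\in N$ be the edge at $x$. In $G$, $f$ has an end in $X-\{v_0\}$. If that end is $v_2$, add $v_0v_1$; if it is $v_1$, add $v_0v_2$. No edge of $\partial(X)$ is incident with $v_0$, since $v_0$ has degree two with both neighbors in $X$, so exactly one of these cases applies. Parallel edges $v_0v_1$ cause no trouble: $v_0$ has degree two and distinct neighbors, so there is exactly one edge to each. These two maps are mutually inverse.

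\emph{Main obstacle.} The only delicate point is confirming that an edge of $G$ between $v_1$ and $v_2$ is deleted in $H$ (it is, since it has both ends in $X$) and can never lie in a perfect matching of $G$. It cannot, because $v_0$ must be matched to $v_1$ or $v_2$, so that vertex is unavailable. With this checked, the map is well defined and the bijection follows. The equality of the numbers of perfect matchings is then immediate.
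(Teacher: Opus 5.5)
Your proposal is correct and follows the paper's route. A single bi-contraction of $v_0$ gives a bijection of perfect matchings because $\partial(\{v_0,v_1,v_2\})$ is tight, and composing these bijections along the bi-contractions that produce $\widehat{G}$ gives the result. You spell out the explicit maps, including why edges between $v_1$ and $v_2$ play no role, which the paper leaves as ``easily verified''.
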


Observe that, for each \mcg~$G$ distinct from $K_2$ and cycles, its retract~$\widehat{G}$ has minimum degree three or more. Furthermore, one may easily verify that $\widehat{G}$ is cubic if and only if $G$ is a \bisub\ of~$\widehat{G}$.

\smallskip
Now, let $G$ be a \mcg\ with precisely three {\pema}s. By Proposition~\ref{prp:pema-correspondence-retract},
$\widehat{G}$ has precisely three {\pema}s. It follows from the preceding paragraph that $\widehat{G}$ is cubic;
thus $G$ is a \bisub\ of~$\widehat{G}$. This proves the following.

\begin{cor}
\label{cor:mcg-three-pemas-bisub-cubic}
Every \mcg\ that has precisely three {\pema}s is a \bisub\ of a $2$-connected cubic graph. \qed
\end{cor}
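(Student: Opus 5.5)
The corollary follows by combining Proposition~\ref{prp:pema-correspondence-retract} with the remarks on retracts made just before it. Let $G$ be a \mcg\ with precisely three {\pema}s.

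The first step is to rule out the degenerate cases. The graph $K_2$ has exactly one \pema, and an even cycle has exactly two, so $G$ is neither $K_2$ nor a cycle. By the observation preceding the corollary, the retract $\widehat{G}$ then has minimum degree at least three.

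The second step is to show that $\widehat{G}$ is cubic. By Proposition~\ref{prp:pema-correspondence-retract}, $\widehat{G}$ also has exactly three {\pema}s. Since $\widehat{G}$ is \mc, every edge lies in at least one of these three {\pema}s. Now fix any vertex $v$. Each \pema\ contains exactly one edge of $\partial(v)$, and each edge of $\partial(v)$ lies in some \pema. Distinct edges of $\partial(v)$ cannot lie in the same \pema, so their number is at most the number of {\pema}s. Hence $\deg(v) \leq 3$. Together with minimum degree at least three, this gives that $\widehat{G}$ is cubic.

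The third step is to conclude. A cubic \mcg\ is $2$-connected, by the Tutte-based remark in Section~1. By the stated equivalence, $\widehat{G}$ being cubic means $G$ is a \bisub\ of $\widehat{G}$, which is the claim.

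The only step needing care is the degree bound. The point is that degree counts edges with multiplicity, so parallel edges are counted correctly: distinct parallel edges at $v$ still must lie in distinct {\pema}s. I expect no real obstacle beyond this.
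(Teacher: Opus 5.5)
Your proof is correct and follows the paper's argument. You pass to the retract $\widehat{G}$, which has exactly three perfect matchings by Proposition~\ref{prp:pema-correspondence-retract}, deduce that it is cubic, and conclude that $G$ is a bi-subdivision of it; you simply make explicit what the paper leaves implicit, namely excluding $K_2$ and even cycles and the injectivity argument bounding each degree by the number of perfect matchings.
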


Now, we may proceed to prove the main result of this section.

\begin{thm}
\label{thm:mcg-3-pema}
A {\mcg} $G$ has precisely three {\pema}s if and only if $G$ is a \bisub\ of one of $\Theta$ and $K_4$.
\end{thm}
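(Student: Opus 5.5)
The reverse implication is immediate. $\Theta$ and $K_4$ each have exactly three perfect matchings, and Proposition~\ref{prp:pema-correspondence-retract} transfers this count to every bi-subdivision, since the retract of a bi-subdivision of $J$ is $J$ itself. For the forward implication, let $G$ have precisely three perfect matchings. By Corollary~\ref{cor:mcg-three-pemas-bisub-cubic} and Proposition~\ref{prp:pema-correspondence-retract}, it suffices to show that every $2$-connected cubic graph $H$ with exactly three perfect matchings is isomorphic to $\Theta$ or $K_4$.

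The plan is a counting argument on $H$. Let $n:=|V(H)|$, so $|E(H)|=3n/2$. Since $H$ is matching covered, every edge lies in at least one of the three perfect matchings $M_1,M_2,M_3$. Each $M_i$ has $n/2$ edges, so the three together cover at most $3n/2$ edges, which equals $|E(H)|$. Hence each edge lies in exactly one $M_i$, and $\{M_1,M_2,M_3\}$ is a partition of $E(H)$, that is, a $3$-edge-colouring of $H$.

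Next consider the union $M_1\cup M_2$. It is a $2$-regular spanning subgraph whose components are even cycles alternating between $M_1$ and $M_2$. Suppose it has at least two components, say $C_1$ and $C_2$. Then $C_1\cup C_2$ is conformal, because the remaining vertices are matched by the edges of $M_1$ (equivalently $M_2$) lying outside $C_1\cup C_2$. Lemma~\ref{lem:even-conformal-bicycle} would then give at least four perfect matchings, a contradiction. So $M_1\cup M_2$ is a Hamilton cycle $Q$ of $H$, and $M_3$ is a perfect matching of chords of $Q$; when $n=2$ these chords are parallel edges.

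The main obstacle is to show that a Hamilton cycle $Q$ with a perfect matching of chords has more than three perfect matchings unless $n\le 4$. The first thing I would try is to produce a fourth perfect matching directly. Suppose $n\ge 6$. Pick a chord $e=xy$ of $M_3$ and consider the graph $H-x-y$. The path $Q-x-y$ splits into two subpaths, and the remaining chords join them. I would choose $e$ to be a chord that is ``shortest'' along $Q$, so that one side has an even number of vertices. Then a perfect matching using $e$ together with alternating edges of $Q$, possibly combined with a second chord, differs from $M_1$, $M_2$ and $M_3$.

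If this direct construction gets messy, I would instead apply Lemma~\ref{lem:even-conformal-bicycle} again. For example, take a chord $uv\in M_3$ and a chord $u'v'\in M_3$ with $u'$ adjacent to $u$ on $Q$. These should give an even cycle through both chords using $Q$-edges, disjoint from a second even cycle, with the rest matchable. The case analysis I expect is over whether two chords cross or are nested. Either way, I would show that for $n\ge6$ there is a perfect matching containing exactly one edge of $M_3$, or exactly two, whereas each of $M_1$, $M_2$, $M_3$ contains $0$ or $n/2$ edges of $M_3$. Such a matching is a fourth one.

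Finally, for $n=2$ the graph $H$ is $\Theta$. For $n=4$, $Q$ is a $4$-cycle and the chords are its two diagonals, so $H$ is $K_4$. The diagonals cannot instead be parallel copies of $Q$-edges, because then $M_3$ would coincide with $M_1$ or $M_2$ as a set of vertex pairs, yet the resulting graph still has exactly three perfect matchings only if it is 2-connected cubic. I would check this small case directly.
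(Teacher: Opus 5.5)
Your reduction is the paper's. The three perfect matchings partition $E(H)$, Lemma~\ref{lem:even-conformal-bicycle} makes $Q:=M_1\cup M_2$ a Hamilton cycle, and $M_3$ is a perfect matching of chords of $Q$. The gap is the step you yourself flag as the main obstacle. You assert, but do not prove, that $n\ge 6$ yields a perfect matching with exactly one or two $M_3$-edges, and your heuristics for it are off.

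Whether $Q-x-y$ splits into two even paths depends on \emph{parity}, not on length. It happens exactly when the chord $xy$ joins the two colour classes $A,B$ of $Q$. In that case $\{xy\}$ plus $Q$-edges is a fourth perfect matching whenever $n\ge 4$, and this is what forces $\Theta$. It also disposes of your garbled $n=4$ remark: chords parallel to $Q$-edges join different classes, and that graph has five perfect matchings.

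If instead every chord lies within one colour class, no single chord works. A pair $e,f$ (with $Q$-edges for the rest) works only if the two chords \emph{cross} and lie in \emph{different} classes. For a nested pair, or a crossing pair inside one class, some arc of $Q$ between their ends has an odd number of internal vertices. So your ``either way'' over crossing versus nested is false. The real content is that a crossing pair with one chord in $A$ and one in $B$ must exist.

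That needs an extremal choice, which is exactly the paper's argument. Among all $e=a_1a_2\in M_3$ and both $a_1a_2$-paths of $Q$, pick a path $P$ with fewest vertices; the paper phrases this as taking a shortest odd cycle $P+e$. Here $P$ is even of length at least two, so the neighbour $b_1$ of $a_1$ on $P$ is internal and lies in $B$. Let $f=b_1b_2\in M_3$. Then $b_2\notin\{a_1,a_2\}$, and $b_2$ is not internal to $P$, since otherwise the $b_1b_2$-subpath of $P$ would contradict the choice of $P$. Hence $a_1,b_1,a_2,b_2$ occur in this order on $Q$, and all four arcs between them are odd. So $\{e,f\}$ together with $Q$-edges is a perfect matching meeting $M_3$ in exactly two edges. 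That is a fourth perfect matching unless $n=4$, where $H=K_4$.

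The paper states this last step as follows: both even cycles of $Q\cup\{e,f\}$ through $e$ and $f$ are conformal. Every conformal cycle is the symmetric difference of two of the three perfect matchings, hence Hamiltonian, so $n=4$. Your bicycle variant, as stated, does not supply the second disjoint even cycle it needs. It is the single conformal cycle through $e$ and $f$ that does the work.
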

\begin{proof}
Let $J$ denote a $2$-connected cubic graph that has precisely three {\pema}s, say $M_1$, $M_2$ and $M_3$.
By Corollary~\ref{cor:mcg-three-pemas-bisub-cubic}, it suffices to show that $J$ is one of $\Theta$~and~$K_4$.

\smallskip
Since $J$ is cubic,
$M_1, M_2$ and $M_3$ are pairwise disjoint. Furthermore, by Lemma~\ref{lem:even-conformal-bicycle}, we infer that
the symmetric difference of any two {\pema}s is a Hamilton cycle.
Since every conformal cycle is the symmetric difference of
two perfect matchings,
we infer that $J$ has precisely three conformal cycles,
each of which is hamiltonian; these are $M_1 \Delta M_2$,
$M_2 \Delta M_3$ and $M_3 \Delta M_1$.
We let $A$ and $B$ denote the color classes of the cycle $C:=M_1 \Delta M_2$.

\smallskip
First suppose that some edge of $M_3$
has one end in $A$ and the other end in $B$.
We let $e:=ab$ denote such an edge, where $a \in A$ and $b \in B$.
We let $P_1$ and $P_2$ denote the two distinct $ab$-paths of $C$.
Each of them is an odd path. Consequently, each of $P_1 \cup \{e\}$
and $P_2 \cup \{e\}$ is a conformal cycle of $J$. As noted earlier, each conformal cycle
is hamiltonian, whence each of $P_1$ and $P_2$ has exactly one edge.
Thus $J$ is $\Theta$.

\smallskip
We now consider the remaining case --- for each edge $e \in M_3$, either $e$
has both ends in~$A$, or otherwise $e$ has both ends in $B$.
Consequently, for each $e \in M_3$, the spanning subgraph $C \cup \{e\}$
comprises two odd cycles. Among all such odd cycles of $J$, we choose one
that is shortest, say $Q$. Thus $Q=P \cup \{e\}$ where $P$ is an even path of $C$
and $e \in M_3$. Adjust notation so that $e:=a_1a_2$ has both ends in $A$.
We let $b_1 \in B$ denote the unique neighbour of $a_1$ in~$P$,
and we let $f:=b_1b_2$ denote the unique edge of $M_3$ incident with~$b_1$.
Note that $b_2 \in B$. Furthermore, by choice of $Q$, the vertex $b_2$ does not
lie on $P$. Consequently, $a_1, b_1, a_2, b_2$ appear in that order on
the Hamilton cycle $C$.
Observe that the spanning subgraph $C \cup \{e,f\}$
has two even cycles --- each of which is distinct from $C$.
Furthermore, each of them
uses both $e$ and $f$, and each of them is a conformal cycle of $J$.
Since each conformal cycle is hamiltonian, we infer that
$J$ has exactly four vertices; whence $J$ is~$K_4$.
\end{proof}

Now, we proceed to prove another result that we will find useful in the next section.

\begin{lem}
\label{lem:edge-set-union-of-three-pms}
Every \mcg, whose edge-set may be expressed as the union of 
three distinct {\pema}s, is a \bisub\ of a $2$-connected cubic graph.
\end{lem}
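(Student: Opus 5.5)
We will work with the retract, as in the proof of Corollary~\ref{cor:mcg-three-pemas-bisub-cubic}. Let $G$ be a \mcg\ with $E(G)=M_1\cup M_2\cup M_3$, where $M_1,M_2,M_3$ are distinct {\pema}s. Each vertex $v$ is covered exactly once by each $M_i$, and every edge at $v$ lies in some $M_i$. Hence $\deg(v)\le 3$, so $G$ is subcubic. It is also connected, being \mc, and we may take it to be distinct from $K_2$.

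\emph{Degree-two vertices.} Let $v$ be a vertex of degree two. Then two of the three matchings use the same edge at $v$. First dispose of the degenerate cases. If two matchings coincide then they are not distinct, contrary to hypothesis. If $G$ is a cycle, then it is an even cycle, and an even cycle has only two {\pema}s; so three distinct {\pema}s rule this out as well.

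\emph{Reduction to the retract.} Consider $\widehat{G}$. By Proposition~\ref{prp:pema-correspondence-retract}, the {\pema}s of $G$ and $\widehat{G}$ correspond bijectively. We check that the property ``the edge set is the union of three distinct {\pema}s'' passes to the bi-contraction $H$ of a degree-two vertex $v_0$ with distinct neighbors $v_1,v_2$. The correspondence takes a {\pema} $M$ of $G$ to $M-\partial(v_0)$, restricted to $H$. Every edge of $H$ other than the two contracted ones is an edge of $G$, so it lies in some $M_i$, and hence in the image of that $M_i$. The images of $M_1,M_2,M_3$ are distinct because the correspondence is a bijection. Thus the property is inherited. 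Iterating, $\widehat{G}$ is subcubic, its edge set is the union of three distinct {\pema}s, and (since $G$ is not $K_2$ or a cycle) it has minimum degree at least three, by the observation following Proposition~\ref{prp:pema-correspondence-retract}. Therefore $\widehat{G}$ is cubic.

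\emph{Conclusion.} By the same remark, $\widehat{G}$ is cubic if and only if $G$ is a \bisub\ of~$\widehat{G}$. Finally, $\widehat{G}$ is \mc\ and cubic, hence $2$-connected, and the lemma follows.

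\emph{Main obstacle.} The delicate step is the inheritance of the covering property under bi-contraction. We have to confirm that the edges of $H$ are exactly the edges of $G$ outside $\partial(v_0)$, suitably reattached. Multiple edges may arise if $v_1$ and $v_2$ have a common neighbor, but these are still images of edges of $G$, so the covering argument goes through unchanged. We also have to confirm that the retract avoids degree-two vertices with a single neighbor. Such a vertex would be joined by a double edge to a vertex of degree two, which forces a $\Theta$-like two-vertex component, and this is excluded unless $G=C_2$, which is a cycle.
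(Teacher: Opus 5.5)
Your proof is correct, but it takes a different route from the paper's. You push the hypothesis down to the retract. Under a bi-contraction, the bijection $M\mapsto M-\partial(v_0)$ carries $M_1,M_2,M_3$ to three distinct perfect matchings whose union is the whole edge set of the smaller graph. Iterating, $\widehat{G}$ is again a union of three perfect matchings, hence subcubic, hence cubic by the minimum-degree observation. You then invoke the remark that $\widehat{G}$ is cubic if and only if $G$ is a bi-subdivision of $\widehat{G}$. This makes the lemma a near-copy of the proof of Corollary~\ref{cor:mcg-three-pemas-bisub-cubic}, at the cost of relying on the retract machinery: the cited uniqueness of $\widehat{G}$ and the ``easily verified'' equivalence.

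The paper instead argues directly on $G$. Every maximal path whose internal vertices have degree two must be odd. If it were even, each perfect matching would contain exactly one of the four edges at its ends that are off the path, so three perfect matchings could not cover all four. Replacing each such path by an edge then yields the cubic graph.

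The two arguments detect the same obstruction. In your route the parity content is hidden inside the equivalence you quote: bi-contracting along an even maximal path merges its two cubic ends into a vertex of degree four, which is exactly what subcubicity of $\widehat{G}$ forbids. The paper's version is self-contained and makes that obstruction explicit.

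Two cosmetic points. The sentence about two matchings coinciding is vacuous. The degenerate case in your last paragraph is better argued as follows: the unique neighbour of such a vertex would have all its other edges inadmissible, which forces $G=C_2$, a case the minimum-degree observation already excludes.
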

\begin{proof}
Let $G$ denote a \mcg\ whose edge-set may be expressed a the union of
three distinct {\pema}s.
It follows immediately that each vertex of $G$
has degree two or three. Furthermore, $G$ has at least two cubic vertices.

\begin{sta}
\label{sta:odd-path}
Let $P$ be a maximal path of $G$, whose each internal vertex is of degree~two
in~$G$. Then $P$ is an odd path.
\end{sta}

\begin{proof}
Let $v_1$ and $v_2$ denote the ends of $P$.
By maximality of $P$, each of $v_1$~and~$v_2$ is a cubic vertex of~$G$.
For $i \in \{1,2\}$, let $e_i$ and $f_i$ denote the two edges of $\partial(v_i)-E(P)$.
Observe that, for any \pema~$M$ of~$G$,
the set $M \cap \{e_1,f_1,e_2,f_2\}$ is a singleton.
Consequently, $E(G)$ can not be covered by three {\pema}s,
contrary to our hypothesis.
\end{proof}

Statement~\ref{sta:edge-set-union-of-three-pms}
immediately implies that $G$ is a \bisub\ of some cubic graph.
This proves Lemma~\ref{lem:edge-set-union-of-three-pms}.
\end{proof}

\subsection{A proof of the $\Theta$ - $K_4$ Theorem}
\label{sec:theta-K4}

In this section, we will prove Theorem~\ref{thm:mcg-theta-K4-comi} which states
that every \mcg\ is either $\Theta$-based, or $K_4$-based, or both.
We will find the following easy fact useful.

\begin{lem}
\label{lem:union-of-pms}
Let $M_1, M_2, \dots, M_r$ denote distinct perfect matchings of a (matchable) graph~$G$,
and let $H$ denote the (spanning) subgraph formed by the edge-set
$M_1 \cup M_2 \cup \cdots \cup M_r$. Then each component of $H$ is a matching
covered conformal subgraph of $G$. \qed
\end{lem}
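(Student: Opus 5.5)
The plan is to handle the two required properties of a component $K$ of $H$ separately: first that $K$ is matching covered, then that $K$ is a conformal subgraph of $G$. The single fact driving everything is that $V(H)=V(G)$, because each $M_i$ is a perfect matching of $G$ and hence $H$ is spanning. Consequently, for each $i$, every vertex of $K$ is covered by an edge of $M_i$. That edge lies in $H$ and has an end in $K$, so it lies in $K$. Hence $M_i \cap E(K)$ is a perfect matching of $K$, and $M_i - E(K)$ is a perfect matching of $G - V(K)$.

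Conformality is now immediate: since $r \geq 1$, the graph $G-V(K)$ is matchable, witnessed by $M_1 - E(K)$.

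For the matching covered property, $K$ is connected by definition. It has order two or more, since any vertex of $K$ is matched by $M_1$ to a neighbour in $K$. Each edge $e$ of $K$ is an edge of $H$, so $e \in M_i$ for some $i$. Then $e$ lies in $M_i \cap E(K)$, which is a perfect matching of $K$. Thus every edge of $K$ lies in a perfect matching of $K$, and $K$ is matching covered.

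There is no real obstacle here; the only point needing care is the observation that matching edges cannot leave a component of $H$. That holds simply because they are edges of $H$. Distinctness of the $M_i$ plays no role in the argument.
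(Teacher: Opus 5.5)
Your proof is correct: the observation that each $M_i$ restricts to a perfect matching of every component $K$ of $H$, and that $M_i - E(K)$ is a perfect matching of $G - V(K)$, gives both conformality and the matching covered property at once. The paper states this lemma without proof as an easy fact, and your argument is the routine verification it leaves to the reader.
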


\smallskip
By a {\it \claw} of a graph~$G$,
we mean any subgraph that is formed by three edges
that are all incident with a common vertex.
(We admit the possibility that any two, or perhaps all three, are parallel edges.)
Ageev, Benchetrit, Seb{\H{o}} and Szigeti~\cite{abss11} proved that in a \mcg~$G$,
each \claw\ participates in a subgraph~$H$ (of~$G$) that is either a \bisub\ of $\Theta$ or of $K_4$.
We prove a slightly stronger statement that immediately implies Theorem~\ref{thm:mcg-theta-K4-comi}.

\begin{thm}
\label{thm:claw-theta-K4-comi}
In a \mcg~$G$, each \claw\ is a subgraph of a conformal subgraph~$H$ (of~$G$) that is either a
\bisub\ of $\Theta$ or of $K_4$.
\end{thm}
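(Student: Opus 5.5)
Let the \claw\ consist of three edges $e_1,e_2,e_3$ incident with a common vertex~$v$. My plan is to realize $H$ as a component of a union of perfect matchings and then to use Lemma~\ref{lem:edge-set-union-of-three-pms} and Theorem~\ref{thm:mcg-3-pema}.

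\textbf{Step 1: build a union of three matchings.} Since $G$ is \mc, for each $i\in\{1,2,3\}$ I pick a \pema~$M_i$ of~$G$ with $e_i \in M_i$. These three matchings are distinct, because each meets $\partial(v)$ in exactly one edge and those edges differ. Let $F$ be the spanning subgraph with edge-set $M_1\cup M_2\cup M_3$, and let $H'$ be its component containing~$v$. Then $H'$ contains $e_1,e_2,e_3$. By Lemma~\ref{lem:union-of-pms}, $H'$ is a \mc\ conformal subgraph of~$G$. Moreover, the restrictions $M_i\cap E(H')$ are three distinct \pema s of~$H'$ whose union is $E(H')$, since $v$ separates them. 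By Lemma~\ref{lem:edge-set-union-of-three-pms}, $H'$ is a \bisub\ of a $2$-connected cubic graph~$J'$. Because $v$ is a cubic vertex of~$H'$, it corresponds to a vertex of~$J'$, and the three edges of the claw lie on the three distinct subdivided edges of~$J'$ at~$v$.

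\textbf{Step 2: shrink by induction.} I want to replace $H'$ by a conformal subgraph with precisely three \pema s containing the claw. I would choose, among all conformal \mc\ subgraphs containing the claw whose edge-set is a union of three \pema s, one with the fewest edges; call it~$H$. Then $H$ is a \bisub\ of some $2$-connected cubic~$J$. If $H$ has exactly three \pema s, Theorem~\ref{thm:mcg-3-pema} finishes the proof. Conformality transfers because a conformal subgraph of a conformal subgraph is conformal: match $G-V(H')$ by hypothesis, and match $H'-V(H)$ using $H'$'s conformality.

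\textbf{Step 3: handle a fourth matching, the main obstacle.} Otherwise $H$ has a fourth \pema~$N$, distinct from the three matchings $N_1,N_2,N_3$ covering $E(H)$. Here $N_i$ is the one using $e_i$. Since $N$ meets $\partial_H(v)$ in a single edge, say $e_i$, the union $N\cup N_j\cup N_k$ with $\{i,j,k\}=\{1,2,3\}$ again contains the claw. I need this union, or the component at~$v$, to be a proper subgraph of~$H$, which would contradict minimality. It might instead cover all of~$E(H)$. So I would argue with the cubic structure of~$J$. The union is proper exactly when some edge of~$N_i$ lies in none of $N,N_j,N_k$. If every choice fails, every edge of~$H$ lies in at least two of the four matchings, and at each cubic vertex the four matchings use the three incident edges so that one edge is used twice. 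Then I would try swapping along an alternating cycle of $N\,\Delta\, N_i$ avoiding~$v$. This produces a matching $N'$ containing $e_i$ such that $N'\cup N_j\cup N_k$ misses some edge, and I expect that to be where the real work lies. Failing that, I would fall back on an ear-decomposition argument: pick a removable ear of~$J$ avoiding the three edges at~$v$ and delete it, inducting on~$|E(J)|$. The base cases are $J=\Theta$ or $K_4$, which are handled by Theorem~\ref{thm:mcg-3-pema}.
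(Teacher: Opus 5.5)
Your Steps~1 and~2 are sound and follow the paper's strategy: take a minimal object whose edge-set is a union of three {\pema}s, then apply Lemma~\ref{lem:union-of-pms}, Lemma~\ref{lem:edge-set-union-of-three-pms} and Theorem~\ref{thm:mcg-3-pema}. The gap is Step~3, which is the heart of the proof and which you leave unresolved. You never show that $N\cup N_j\cup N_k$ is a proper subset of $E(H)$. Instead you offer an alternating-cycle swap and an ear-deletion induction, neither of which is carried out. The ear-deletion fallback would not work as stated. In a cubic $J$ a removable ear is a single edge or a pair of edges. Deleting one creates isolated degree-two vertices, that is, odd subdivisions, so the induction leaves the class of {\bisub}s of cubic graphs. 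Nothing in your argument guarantees an ear avoiding $\partial(v)$ either. As written, then, the proposal does not establish that the minimal $H$ has only three {\pema}s.

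The missing idea is a short count, essentially the paper's Statement~\ref{sta:uniqueness-of-pms}. At each cubic vertex $u$ of $H$, the matchings $N_1,N_2,N_3$ cover all three edges of $\partial_H(u)$, and each meets $\partial_H(u)$ once. So they use three distinct edges, and the edge of $N_i$ at $u$ lies in neither $N_j$ nor $N_k$. Hence if $N\cup N_j\cup N_k=E(H)$, then $N$ contains the $N_i$-edge at every cubic vertex. Your pigeonhole remark is the right start, but the edge used twice is forced to be the $N_i$-edge. Every edge of $H$ lies on an odd path whose internal vertices have degree two and whose ends are cubic. A {\pema} restricted to such a path is determined by whether it contains an end edge, so $N=N_i$, a contradiction. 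Thus $N\cup N_j\cup N_k$ misses an edge of $H$. Its component at $v$ is conformal and \mc\ by Lemma~\ref{lem:union-of-pms}, is covered by three {\pema}s, contains the claw, and has fewer edges, contradicting minimality. So $H$ has exactly three {\pema}s and Theorem~\ref{thm:mcg-3-pema} applies. The paper streamlines this by minimizing over counterexamples and first showing that degree-two vertices form a stable set. The minimal graph is then genuinely cubic, and $M_1'=M_1$ follows immediately from $E(G)=M_1'\cup M_2\cup M_3$.
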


\begin{proof}
Suppose that the statement is false.
Among all counterexamples, let $G$ denote a \mcg, and let $K$ denote a \claw\ of $G$,
so that $|E(G)|$ is minimum.

\begin{sta}
\label{sta:degree-two-vertices-comprise-stable-set}
The degree two vertices of $G$ comprise a stable set.
\end{sta}

\begin{proof}
Suppose not; let $v_1$ and $v_2$ denote two adjacent vertices, each of degree two.
Let $u_1$ denote the neighbor of $v_1$ distinct from $v_2$, and let $u_2$ denote the neighbor of $v_2$
distinct from $v_1$. Observe that $G-v_1-v_2+u_1u_2$ is a counterexample with fewer edges (by modifying
the \claw\ $K$ appropriately), contrary to our choice of $G$.
\end{proof}

We let $e_1,e_2$ and $e_3$ denote the edges of the \claw~$K$.

\begin{sta}
\label{sta:edge-set-union-of-three-pms}
For each $i \in \{1,2,3\}$, let $M_i$ denote any perfect of $G$ such that $e_i \in M_i$.
Then $E(G) = M_1 \cup M_2 \cup M_3$.
\end{sta}

\begin{proof}
By Lemma~\ref{lem:union-of-pms},
each component of the subgraph formed by the edge-set $M_1 \cup M_2 \cup M_3$
is a matching covered conformal subgraph of $G$; the \claw\ $K$ is a subgraph
of one such component, say $G'$. If $G' \neq G$ then $G'$ is a counterexample with
$|E(G')| < |E(G)|$,
contrary to our choice of~$G$.
Thus $G' = G$, whence $E(G) = M_1 \cup M_2 \cup M_3$.
\end{proof}

Consequently,
by Lemma~\ref{lem:edge-set-union-of-three-pms},
$G$ is a \bisub\ of a cubic graph.
Furthermore, Statement~\ref{sta:degree-two-vertices-comprise-stable-set}
implies that $G$ is in fact a cubic graph.

\smallskip
For each $i \in \{1,2,3\}$, we let $M_i$ denote a fixed perfect matching of $G$
such that $e_i \in M_i$.

\begin{sta}
\label{sta:uniqueness-of-pms}
For each $i \in \{1,2,3\}$,
$M_i$ is the unique perfect matching of $G$ that contains $e_i$.
\end{sta}

\begin{proof}
Let $M'_1$ denote any perfect matching of $G$ that contains $e_1$.
By statement~\ref{sta:edge-set-union-of-three-pms},
$E(G) = M_1 \cup M_2 \cup M_3 = M'_1 \cup M_2 \cup M_3$.
Since $G$ is cubic, this immediately implies that $M_1 = M'_1$.
\end{proof}

It follows from statement~\ref{sta:uniqueness-of-pms}
that $G$ has exactly three perfect matchings.
By Theorem~\ref{thm:mcg-3-pema},
the cubic graph~$G$ is either $\Theta$ or $K_4$; clearly, neither of these
is a counterexample, contrary to our assumption.
This completes the proof of Theorem~\ref{thm:claw-theta-K4-comi}.
\end{proof}

\begin{figure}[!htb]
\centering
\begin{tikzpicture}

\draw (-1,1) to [out=90,in=180] (2,3.5) to [out=0,in=90] (5,1);

\draw (0,0) -- (4,0) -- (4,2) -- (0,2) -- (0,0);
\draw (2,0) -- (2,2);
\draw (0,0) -- (-1,1) -- (0,2);
\draw (4,0) -- (5,1) -- (4,2);

\draw (0,0)node{}node[below,nodelabel]{$y$};
\draw (4,0)node{}node[below,nodelabel]{$z$};
\draw (4,2)node{}node[above,nodelabel]{$x$};
\draw (0,2)node{}node[above,nodelabel]{$w$};
\draw (-1,1)node{}node[left,nodelabel]{$s$};
\draw (5,1)node{}node[right,nodelabel]{$t$};
\draw (2,0)node{}node[below,nodelabel]{$v$};
\draw (2,2)node{}node[above,nodelabel]{$u$};

\end{tikzpicture}
\caption{The Bicorn $R_8$ has all three types of vertices}
\label{fig:Bicorn}
\end{figure}

Given a vertex~$v$ of a $2$-connected cubic graph~$G$, let $K^v$ denote the \claw\ formed by the three edges incident at $v$.
It follows from Theorem~\ref{thm:claw-theta-K4-comi} that (i) either $K^v$ is a subgraph of a conformal subgraph~$H$
(of~$G$) that is a \bisub\ of~$\Theta$, or (ii) $K^v$ is a subgraph of a conformal subgraph~$H$ (of~$G$)
that is a \bisub\ of~$K_4$, or (iii) both possibilities hold.
Based on this observation, we may partition the vertex set of any $2$-connected cubic graph~$G$ into three (possibly empty) sets.
We give an example below.

We use ${\sf Aut}(G)$ to denote the automorphism group of a graph~$G$.
The well-known graph shown in Figure~\ref{fig:Bicorn} is the {\em Bicorn}, denoted by $R_8$.
Observe that ${\sf Aut}(R_8)$ has precisely three orbits: $\{s,t\}, \{u,v\}$ and $\{w,x,y,z\}$.
The reader may verify that the partition of $V(R_8)$, as described in the preceding paragraph,
is the following: (i) $\{u,v\}$, (ii) $\{s,t\}$, and (iii)~$\{w,x,y,z\}$.

\subsection{A proof of the $K_4$-decoration Theorem}
\label{sec:K4-decoration-thm-proof}

The following is easily observed by considering the symmetric difference of two appropriately chosen {\pema}s.

\begin{lem}
\label{lem:conformal-cycle-containing-adjacent-edges}
In a \mcg, there exists a conformal cycle containing any two adjacent edges. \qed
\end{lem}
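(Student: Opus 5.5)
Lemma~\ref{lem:conformal-cycle-containing-adjacent-edges} follows by choosing two perfect matchings that separate the two edges. Let $e=uv$ and $f=vw$ be two adjacent edges of a \mcg~$G$; they are distinct edges sharing the end~$v$.

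First, since $G$ is \mc, there is a \pema~$M_e$ containing $e$ and a \pema~$M_f$ containing $f$. Because $e$ and $f$ share the vertex~$v$, no \pema\ contains both of them. Hence $f \notin M_e$ and $e \notin M_f$, so both $e$ and $f$ lie in the symmetric difference $M_e \,\Delta\, M_f$.

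Second, this symmetric difference is a disjoint union of even cycles that alternate between $M_e$ and $M_f$. We argue that $e$ and $f$ lie on the same cycle. The vertex $v$ is covered by $e \in M_e$ and by $f \in M_f$, so in $M_e \,\Delta\, M_f$ the vertex $v$ has degree exactly two, with incident edges $e$ and $f$. Since the components are cycles, the component through~$v$ is a single cycle, say $Q$, and it contains both $e$ and $f$. If $e$ and $f$ are parallel edges, then $u=w$ and $Q$ is the $2$-cycle formed by $e$ and $f$, which is still a valid cycle in this multigraph setting.

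Third, $Q$ is conformal. The edges of $M_e$ that do not lie on~$Q$ cover every vertex of $V(G)-V(Q)$, because $Q$ is an $M_e$-alternating cycle and $M_e$ matches $V(Q)$ within $Q$. Therefore $G-V(Q)$ is matchable, as required.

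There is no substantial obstacle here. The only point needing care is the degree-two observation at~$v$, which forces $e$ and $f$ onto the same component of $M_e \,\Delta\, M_f$, together with the remark that the $2$-cycle case is allowed.
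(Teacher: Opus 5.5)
Your proof is correct and follows exactly the approach the paper indicates: take the symmetric difference of a perfect matching containing one edge and a perfect matching containing the other, and observe that the component through the common end is a conformal cycle containing both. Your extra details (degree two at the shared vertex, $M_e - E(Q)$ matching $G - V(Q)$, and the $2$-cycle case for parallel edges) fill in precisely what the paper leaves to the reader.
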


In fact, Little~\cite{litt74} proved the above statement for any two (not necessarily adjacent) edges. However,
we won't need that strengthening for the following easy consequence.

\begin{cor}
\label{cor:splicing-at-good-vertex}
Let $J$ be any \mcg\ distinct from $K_2$,
and let $C:=\partial(X)$ denote a separating cut of a \mcg~$G$.
Assume that
the $C$-contraction $G_1:=G/(X \rightarrow x)$ has a conformal subgraph,
say~$H_1$, that is a bi-subdivision of~$J$
so that either $x \notin V(H_1)$ or $x$ is a vertex of $H_1$ of degree two.
Then $G$ has a conformal subgraph $H$ that is a bi-subdivision of~$J$.
\end{cor}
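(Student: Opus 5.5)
Since $H_1$ is a conformal subgraph of $G_1$, we may fix a perfect matching $M_1$ of $G_1 - V(H_1)$. We lift $H_1$ to a subgraph $H$ of~$G$ that is a \bisub\ of~$J$. In doing so we expand the vertex $x$ (if it is used) into a path inside~$G[X]$. Separately, we extend the matching outside $H_1$ into $X$.

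\textbf{Case $x \notin V(H_1)$.} Then $M_1$ covers $x$ by exactly one edge $f \in C$. Because $C$ is a separating cut, the other $C$-contraction $G_2 := G/(\overline{X} \rightarrow \overline{x})$ is \mc. Hence $G_2$ has a \pema\ $M_2$ containing~$f$. The set $M_1 \cup M_2$ is then a \pema\ of~$G$ (after the obvious identification of $f$ in both). Since $H_1$ avoids~$x$, we have $H_1 \subseteq G - X$, so $H := H_1$ is a subgraph of~$G$. Moreover, $M_1 \cup M_2$ restricted to $V(G) - V(H)$ is a \pema\ of $G - V(H)$. So $H$ is conformal and is a \bisub\ of~$J$.

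\textbf{Case $x$ has degree two in~$H_1$.} Let $f_1, f_2 \in C$ be the two edges of $H_1$ incident with~$x$, and let $y_1, y_2 \in X$ be their ends in~$X$. Here we invoke Lemma~\ref{lem:conformal-cycle-containing-adjacent-edges} in the \mcg~$G_2$. The edges $f_1, f_2$ are adjacent at $\overline{x}$, so $G_2$ has a conformal cycle $Q$ through $f_1$ and~$f_2$. Then $Q - \overline{x}$ is a path $P$ in $G[X]$ from $y_1$ to $y_2$ (or the single vertex $y_1 = y_2$, if $f_1$ and $f_2$ share their end in~$X$). Since $Q$ is a conformal even cycle, $P$ has an odd number of vertices, i.e.\ $P$ is an even path. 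Also $G_2 - V(Q)$ has a \pema~$N_2$, which covers $X - V(P)$. Now let $H$ be obtained from $H_1$ by replacing $x$ with~$P$. Thus the path of $H_1$ through $x$ gains an even number of new vertices, so $H$ is again a \bisub\ of~$J$. The number of added vertices is $|V(P)|-1$, which is even. In the degenerate case $y_1 = y_2$, $x$ is simply renamed. Finally, $M_1 \cup N_2$ is a \pema\ of $G - V(H)$: the set $M_1$ covers $\overline{X} - V(H_1)$ using no edges of~$C$, because $x \in V(H_1)$; and $N_2$ covers $X - V(P)$ using no edges of~$C$, because $\overline{x} \in V(Q)$.

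The only delicate point is the parity bookkeeping in the second case: $P$ must contribute an even number of subdivision vertices, and the matchings on both sides must avoid~$C$. Both follow from $Q$ being conformal. We also need the simple check that $P$ is a genuine path whose internal vertices are new, i.e.\ disjoint from $H_1 - x$. This is immediate because $P \subseteq X$ while $H_1 - x \subseteq \overline{X}$.
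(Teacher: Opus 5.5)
Your proof is correct and follows the paper's argument: when $x \notin V(H_1)$ you extend $M_1$ across $C$ using a perfect matching of $G/\overline{X}$ through the $M_1$-edge at $x$, and when $x$ has degree two you replace $x$ by the even path $Q-\overline{x}$, where $Q$ is a conformal cycle of $G/\overline{X}$ through the two $H_1$-edges at $x$; this is exactly the paper's construction of $H_1 \cup Q$ with the matching $M_1 \cup M_Q$. One small wording slip: in the first case $M_1 \cup M_2$ is a perfect matching of $G - V(H_1)$, not of $G$, and $G - V(H_1)$ is what you actually use in the next sentence.
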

\begin{proof}
Let $M_1$ denote a \pema\ of $G_1-V(H_1)$.

If $x \notin V(H_1)$, let $e$ denote the unique edge in $M_1 \cap \partial(x)$.
Now, let $H:=H_1$, and let $M:=M_1 \cup M_e$ where $M_e$ is a \pema\ of~$G/{\overline{X}}$ containing $e$.
Observe that $M$ is a \pema\ of $G-V(H)$.

Now suppose that $x$ is a vertex of~$H_1$ of degree two,
and let $e$~and~$f$ denote the two edges in $E(H_1) \cap \partial(x)$.
By Lemma~\ref{lem:conformal-cycle-containing-adjacent-edges},
$G/{\overline{X}}$ has a conformal cycle, say~$Q$,
containing $e$~and~$f$; we let $M_Q$ denote a \pema\ of~$(G/{\overline{X}}) - V(Q)$. Observe that
$H:=H_1 \cup Q$ is a bi-subdivision of $J$, and that $M:=M_1 \cup M_Q$ is a \pema\ of $G-V(H)$.

In both cases, $H$ is a conformal subgraph (of~$G$) that is a bi-subdivison of~$J$.
\end{proof}

Now, we are ready to prove the \KFdec\ Theorem (\ref{thm:K4-decoration}) stated below
in the contrapositive.

\begin{thm}
\label{thm:K4-decoration-contrapositive}
      {\sc [$K_4$-decoration Theorem]}
\breaksta
Let $J$ denote any $2$-connected cubic graph,
and let $C$ denote a separating cut of a \mcg~$G$.
If either \mbox{$C$-contraction} of~$G$ is $K_4(J)$-based
then $G$ is $K_4(J)$-based.
\end{thm}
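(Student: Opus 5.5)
By symmetry of the two shores, we assume that $G_1:=G/(X\rightarrow x)$ is $K_4(J)$-based. Let $L:=J^T\odot K_4$ be a \KFdec\ of~$J$ that is a \comi\ of~$G_1$. Let $H_1$ be a conformal subgraph of~$G_1$ that is a \bisub\ of~$L$. The goal is to produce a conformal subgraph $H$ of~$G$ that is a \bisub\ of some \KFdec\ $L'$ of~$J$. We split according to the role of the contraction vertex~$x$ in~$H_1$.

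If $x\notin V(H_1)$, or $x$ is a vertex of degree two in~$H_1$, then Corollary~\ref{cor:splicing-at-good-vertex} directly gives a conformal subgraph of~$G$ that is a \bisub\ of~$L$, and we are done. The remaining case is that $x$ is a cubic vertex of~$H_1$, i.e.\ it corresponds to a vertex of~$L$. Let $e_1,e_2,e_3$ be the three edges of $H_1$ at~$x$; these form a \claw\ $K$ of $G_2:=G/\overline{X}\rightarrow \overline{x}$, which is \mc\ since $C$ is separating. We apply Theorem~\ref{thm:claw-theta-K4-comi} to $K$ in~$G_2$. This yields a conformal subgraph $H_2$ of~$G_2$ containing $e_1,e_2,e_3$ that is a \bisub\ of $\Theta$ or of~$K_4$, and $\overline{x}$ is a cubic vertex of~$H_2$. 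Set $H:=(H_1-x)\cup(H_2-\overline{x})$, glued along $e_1,e_2,e_3$. A \pema\ of $G_1-V(H_1)$ and a \pema\ of $G_2-V(H_2)$ both avoid $C$, so their union is a \pema\ of $G-V(H)$; hence $H$ is conformal in~$G$.

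It remains to identify $H$. It is the splice of $H_1$ and $H_2$ at $x$ and~$\overline{x}$. In the $\Theta$ case, $H_2-\overline{x}$ is a single vertex joined to the three edges by odd paths (the bi-subdivided edges of $\Theta$). Replacing $x$ by this vertex shows that $H$ is again a \bisub\ of~$L$. Parity has to be checked here: an odd path glued to the end segment of an even-subdivided edge of~$H_1$ gives an edge subdivided an even number of times, since the lengths add consistently. In the $K_4$ case, $H_2-\overline{x}$ is a bi-subdivided triangle attached to the three edges, so $H$ is a \bisub\ of $L$ with the vertex of $L$ corresponding to~$x$ replaced by a triangle.

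The main obstacle is this last identification. If $x$ corresponds to an original vertex $t\notin T$ of~$L$, the result is a \bisub\ of $J^{T\cup\{t\}}\odot K_4$, which lies in $K_4(J)$. If $x$ corresponds to a decoration vertex, then a triangle inside a triangle decoration no longer has the form of a \KFdec. In that case I would instead exploit the triangle in~$L$ containing $x$'s vertex. Take the subgraph of $H$ obtained by discarding the path segments so as to revert to a \bisub\ of $L$ itself: delete a suitable pair of opposite paths of the small bi-subdivided triangle (a removable double ear), or route through a single branch. Any even cycle deleted must leave a conformal graph, so we would use the perfect matchings of~$H_2$ to certify this. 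Concretely, of the three \pema s of the $K_4$-\bisub~$H_2$, the one using edge $e_i$ at $\overline{x}$ matches the rest, and its union with $H_1$-structure should exhibit a \bisub\ of~$L$ as a conformal subgraph of~$H$.
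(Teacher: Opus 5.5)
Your treatment of the cases $x\notin V(H_1)$ or $x$ of degree two, the application of Theorem~\ref{thm:claw-theta-K4-comi} to the claw in $G_2$, the conformality of the glued graph, the $\Theta$ case, and the case where $x$ is an original vertex all agree with the paper. The gap is the last case, where $x$ is a decoration vertex of the triangle of $L$ that replaces some $v\in T$. There you aim to recover a conformal bi-subdivision of $L$ itself, and that target is unattainable in general.

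For example, let $J=K_{3,3}$ and $T=\{v\}$, so that $L=K_{3,3}\odot K_4$ has the decoration triangle as its only triangle. Let $G_1:=L$, and let $G$ arise from $L$ by replacing a vertex $x$ of that triangle by a triangle $X$, with $C:=\partial(X)$; this cut is separating by Proposition~\ref{prp:cubic-mcg-3cuts-are-separating-cuts}. Then your $H$ is all of $G$, a simple cubic graph with $10$ vertices and $15$ edges, and $G$ is $L$-free. A spanning conformal bi-subdivision of $L$ would be $G-e$ for some edge $e$, with the two ends of $e$ adjacent in $G-e$, which is impossible in a simple graph. An $8$-vertex one would be a copy of $L$ (with $12$ edges) inside $G-u-w$ for some edge $uw$, but $G-u-w$ has only $10$ edges.

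Locally, what breaks is parity. Let $y,z$ be the other two decoration vertices of $v$, with $e_2\in E(P_{xy})$ and $e_3\in E(P_{xz})$. Let $w',y',z'$ be the cubic vertices of $H_2$ reached from $\overline{x}$ through $e_1,e_2,e_3$. Deleting one side of either triangle (say $P_{y'z'}$) leaves $y$ and $w'$ joined by a path of even length, so the result is not a bi-subdivision of $L$.

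The missing idea is that you only need \emph{some} member of $K_4(J)$, and here the right move is to drop the decoration at $v$. Delete the edges and interior vertices of both $P_{yz}$ and $P_{y'z'}$. Among $y,z,w',y',z'$ only $w'$ remains cubic, and it plays the role of $v$ in $J^{T\setminus\{v\}}\odot K_4$. Its branches are all odd:
\begin{itemize}
\item The branch through $e_1$ is the branch of $H_2$ from $w'$ to $\overline{x}$ spliced, across $e_1$, with the branch of $H_1$ from $x$ through $e_1$. Both are odd and they share $e_1$, so the result is odd.
\item The branch through $y$ (likewise $z$) is the concatenation of three odd paths: from $w'$ to $y'$ in $H_2$, from $y'$ to $y$ across $e_2$, and the third branch of $H_1$ at $y$. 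Hence it is odd.
\end{itemize}
The interior vertices of the odd paths $P_{yz}$ and $P_{y'z'}$ are covered by their internal perfect matchings $M_{yz}$ and $M_{y'z'}$. So $M_1\cup M_2\cup M_{yz}\cup M_{y'z'}$ is a perfect matching of $G-V(H)$. This is precisely the paper's final step; in the example above it yields a spanning bi-subdivision of $K_{3,3}$ in $G$.
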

\begin{proof}
We let $C:=\partial(X)$, and we let $G_1:=G/(X \rightarrow x)$ and $G_2:=G/(\overline{X} \rightarrow \overline{x})$
denote the \mbox{$C$-contractions} of~$G$. Assume that $G_1$ is $K_4(J)$-based; that is, $G_1$
has a conformal subgraph~$H_1$ that is a \bisub\ of some \KFdec, say~$L$, of~$J$.

If either $x \notin V(H_1)$ or $x$ is a vertex of $H_1$ of degree two then,
by Corollary~\ref{cor:splicing-at-good-vertex},
$G$~has a conformal subgraph~$H$ that is a \bisub\ of~$L$. The desired conclusion holds.
Henceforth, we consider that $x$ is a cubic vertex of~$H_1$.

Let $e_1,e_2$ and $e_3$ denote the three edges of~$H_1$ incident at $x$;
these edges form a claw, say~$K$, in $G_2$.
By Theorem~\ref{thm:claw-theta-K4-comi}, $K$ is a subgraph of a conformal subgraph (of~$G_2$), say~$H_2$,
that is either a \bisub\ of $\Theta$ or of~$K_4$.
For $i \in \{1,2\}$, we let $M_i$ denote a \pema\ of $G_i - V(H_i)$.

If $H_2$ is a \bisub\ of~$\Theta$, observe that $H:=H_1 \cup H_2$ is a \bisub\ of~$L$,
and that $M:=M_1 \cup M_2$ is a \pema\ of~$G-V(H)$.
The desired conclusion holds. Henceforth, we consider that $H_2$ is a \bisub\ of~$K_4$.

We let $L:=J^T \odot K_4$ for some fixed $T \subseteq V(J)$. Since $H_1$ is a bi-subdivision of~$L$,
we may fix a correspondence between the cubic vertices of~$H_1$ and the vertices of~$L$;
in fact, we may use the same labels to refer to the vertices of~$L$.
As per this, either $x$ is an original vertex of $L$ (i.e., $x \notin T$), or otherwise $x$
is a decoration vertex of~$L$. We consider these cases separately.

If $x$ is an original vertex of~$L$ then $x$ may also be regarded to be a vertex of~$J$.
Observe that $H:=H_1 \cup H_2$ is a \bisub\
of~$L':=J^{T'} \odot K_4$ where $T':=T \cup \{x\}$, and that $M:=M_1 \cup M_2$
is a \pema\ of~$G-V(H)$. The desired conclusion holds.

\begin{figure}[!htb]
\centering
\begin{tikzpicture}[scale=1.5]

\draw (0,0) -- (2,0) -- (2,2) -- (0,2) -- (0,0);

\draw (3,1) -- (1.5,1);
\draw (3,1) -- (1.5,0.5);
\draw (3,1) -- (1.5,1.5);

\draw (1.5,1)node{};
\draw (1.5,0.5)node{};
\draw (1.5,1.5)node{};

\draw (3,1)node{};
\draw (3,1.2)node[nodelabel]{$v$};

\draw (1.5,-1)node[nodelabel]{$J$};

\end{tikzpicture}
\hspace*{0.3in}
\begin{tikzpicture}[scale=1.5]

\draw (0,0) -- (2,0) -- (2,2) -- (0,2) -- (0,0);
\draw[dashed] (3,1) -- (3.5,0.5) -- (3.5,1.5) -- (3,1);

\draw[dashed] (3.5,1.5) -- (1.5,1.5);
\draw[dashed] (3.5,0.5) -- (1.5,0.5);
\draw[dashed] (3,1) -- (1.5,1);

\draw (1.5,1)node{};
\draw (1.5,0.5)node{};
\draw (1.5,1.5)node{};

\draw (3,1)node{};
\draw (3,1.2)node[nodelabel]{$y$};
\draw (3.5,0.5)node{};
\draw (3.5,0.3)node[nodelabel]{$x$};
\draw (3.5,1.5)node{};
\draw (3.5,1.7)node[nodelabel]{$z$};

\draw (1.75,-0.8)node[nodelabel]{$H_1$};

\draw[dashed] (5,1) -- (6.5,1.5);
\draw[dashed] (5,1) -- (6.5,0.5);
\draw[dashed] (5,1) -- (6.5,1);
\draw[dashed] (6.5,0.5) -- (6.5,1.5);
\draw[dashed] (6.5,0.5) to [out=0,in=270] (7,1) to [out=90,in=0] (6.5,1.5);

\draw (5,1)node{};
\draw (4.8,1)node[nodelabel]{$\overline{x}$};

\draw (6.5,1.5)node{};
\draw (6.5,1.7)node[nodelabel]{$z'$};
\draw (6.5,0.5)node{};
\draw (6.7,1)node[nodelabel]{$y'$};
\draw (6.5,1)node{};

\draw (6,-0.8)node[nodelabel]{$H_2$};

\end{tikzpicture}

\begin{tikzpicture}[scale=1.5]

\draw (0,0) -- (2,0) -- (2,2) -- (0,2) -- (0,0);
\draw[dashed] (3,1) -- (3.5,1.5);

\draw[ultra thick,dashed] (3.5,1.5) -- (1.5,1.5);
\draw[ultra thick,dashed] (6.5,0.5) -- (1.5,0.5);
\draw[ultra thick,dashed] (3,1) -- (1.5,1);
\draw[ultra thick,dashed] (3.5,1.5) -- (6.5,1.5);
\draw[ultra thick,dashed] (3,1) -- (6.5,1);

\draw (1.5,1)node{};
\draw (1.5,0.5)node{};
\draw (1.5,1.5)node{};

\draw (3,1)node{};
\draw (3,1.2)node[nodelabel]{$y$};
\draw (3.5,1.5)node{};
\draw (3.5,1.7)node[nodelabel]{$z$};

\draw[ultra thick,dashed] (6.5,0.5) -- (6.5,1);
\draw[dashed] (6.5,1) -- (6.5,1.5);
\draw[ultra thick,dashed] (6.5,0.5) to [out=0,in=270] (7,1) to [out=90,in=0] (6.5,1.5);


\draw (6.5,1.5)node{};
\draw (6.5,1.7)node[nodelabel]{$z'$};
\draw (6.5,0.5)node{};
\draw (6.7,1)node[nodelabel]{$y'$};
\draw (6.5,1)node{};

\draw (3.5,-0.8)node[nodelabel]{$H_1 \cup H_2$};

\end{tikzpicture}

\caption{Illustration for the proof of the \KFdec\ Theorem (\ref{thm:K4-decoration-contrapositive})}
\label{fig:K4-decoration-contrapositive-last-case}
\end{figure}

Finally, suppose that $x$ is a decoration vertex of~$L$. As discussed earlier,
we may fix a three-to-one correspondence between the decoration vertices of~$L$ and the set~$T$,
say \mbox{$\tau:V(L) - \overline{T} \rightarrow T$}.
Let $v:=\tau(x)$, and let $y$ and $z$ denote the other two
decoration vertices of $L$ that also correspond to $v$.
See Figure~\ref{fig:K4-decoration-contrapositive-last-case};
each dashed line joining two vertices represents a path of odd length.
We let $P_{yz}$ denote the unique $yz$-path in~$H_1$
whose each internal vertex is of degree two (in~$H_1$); the paths $P_{xy}$ and $P_{xz}$ are defined analogously.
Adjust notation so that $e_2 \in E(P_{xy})$ and $e_3 \in E(P_{xz})$.

Note that $H_2$ is a \bisub\ of~$K_4$. In particular, it has three paths that start at $\overline{x}$ ---
say $P_1, P_2$ and $P_3$ --- that are disjoint (except for $\overline{x}$) and end at distinct cubic vertices.
Adjust notation so that $e_2 \in E(P_2)$ and $e_3 \in E(P_3)$,
and let $y'$ and $z'$ denote the ends of $P_2$~and~$P_3$,
respectively, that are distinct from~$\overline{x}$;
see Figure~\ref{fig:K4-decoration-contrapositive-last-case}.
We let $P_{y'z'}$ denote the unique $y'z'$-path in~$H_2$ whose each internal vertex is of degree two (in~$H_2$).
We let $M_{yz}$ denote the \pema\ of~$P_{yz} - y - z$, and we define $M_{y'z'}$ analogously.

Observe that $H:=H_1 \cup H_2 - E(P_{yz}) - E(P_{y'z'})$ is a \bisub\ of $J^{T'} \odot K_4$,
where $T':=T-v$,
and that $M:=M_1 \cup M_2 \cup M_{yz} \cup M_{y'z'}$ is a \pema\ of~$G-V(H)$.
See Figure~\ref{fig:K4-decoration-contrapositive-last-case}; the thick dashed lines
indicate the edges that are retained in $H$.
The desired conclusion holds.

This proves the \KFdec\ Theorem (\ref{thm:K4-decoration-contrapositive}).
\end{proof}

\section{Applications}
\label{sec:applications}

It was shown by Valiant \cite{vali79} that,
in general,
it is not possible to compute the number of {\pema}s of a graph in polynomial-time
(unless {\bf P} $=$ {\bf NP}).
(It is easily observed that one may restrict attention to {\mcg}s.)

An orientation~$D$ of a \mcg~$G$ is a {\em \PO} if each conformal cycle
has an odd number of edges oriented clockwise (and, consequently, an odd number of edges oriented anticlockwise).
The significance of {\PO}s arises from the fact that, whenever a graph~$G$ is known to admit a {\PO},
such an orientation may be computed in polynomial-time, and it may consequently be used to compute the
number of perfect matchings of~$G$ in polynomial-time. However, not all graphs admit a \PO.
For instance, it is easily argued that $K_{3,3}$ does not admit one.

A \mcg~$G$ is {\em \pfaff} if $G$ admits a \PO; otherwise, $G$ is \nonpfaff.
It was shown by Kasteleyn \cite{kast63} that all planar graphs are \pfaff.
A natural question arises: is it possible to characterize
the class of \pfaff\ graphs (or subclasses of interest therein) in terms of `forbidden minors'?
The following seminal result of Little~\cite{litt75} achieved precisely that
for the class of \pfaff\ bipartite graphs.

\begin{thm}
\label{thm:little}
{\sc [Little's Theorem]}
A bipartite \mcg~$G$ is \pfaff\ if and only if
$G$ is \KTTF.
\end{thm}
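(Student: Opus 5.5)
This is Little's Theorem, a classical result that the paper cites rather than proves. I would prove it by handling the two directions separately.

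\textbf{Necessity.} Suppose $G$ is \KTTB, so $G$ has a conformal subgraph $H$ that is a \bisub\ of $K_{3,3}$. Assume, for contradiction, that $D$ is a \PO\ of~$G$.

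First, $D$ restricted to $H$ is a \PO\ of~$H$. Indeed, if $C$ is a conformal cycle of $H$, then $H-V(C)$ is matchable, and together with a \pema\ of $G-V(H)$ this shows that $C$ is a conformal cycle of~$G$.

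Next, a bi-subdivided path can be contracted back to a single edge while preserving the parity of clockwise edges on every cycle. So $K_{3,3}$ would inherit a \PO. But $K_{3,3}$ has none. This follows from a parity count: the conformal cycles of $K_{3,3}$ are its $4$-cycles and $6$-cycles, and summing the pfaffian parity condition over a suitable set of them, in which every edge is used an even number of times, gives a contradiction.

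\textbf{Sufficiency.} This is the hard direction. Let $G$ be a bipartite \mc, \KTTF\ graph. I would argue by induction on $|E(G)|$, using the \ed\ theory of bipartite {\mcg}s; each such graph other than $K_2$ has a removable single ear.

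In the simplest route, $G$ is first reduced to its braces. Tight cut contractions of $G$ remain \KTTF, by Theorem~\ref{thm:cubic-brick-tight-cut-reduction} applied to the bipartite analogue. Pfaffian orientations also glue across tight cuts: orient the two $C$-contractions so that they agree on the cut edges, which is possible by reversing edges at the contracted vertex. So it suffices to treat \KTTF\ braces.

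\begin{itemize}
\item A planar brace is \pfaff\ by Kasteleyn.
\item For a nonplanar \KTTF\ brace, one needs a structural result showing it is a ``$4$-cycle sum'' of planar braces. Pfaffian orientations then combine along the shared $4$-cycles.
\end{itemize}

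The main obstacle is this structural step: showing that a nonplanar brace without a conformal bi-subdivided $K_{3,3}$ decomposes along $4$-cycles into smaller braces. To attack it, I would try the following. Choose an \ed\ in which the last ear creates nonplanarity. Apply Kuratowski to obtain a $K_{3,3}$-subdivision. Then use the brace property, that is, matchability after deleting two vertices from each colour class, to reroute that subdivision into a conformal \bisub. If the rerouting fails, the obstruction should exhibit a $4$-cycle separation, and induction applies to its two sides.
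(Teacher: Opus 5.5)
The paper only cites this theorem (due to Little) and gives no proof of it, so your argument has to be judged on its own.

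\textbf{Necessity.} This direction is fine. For the parity count, the nine $4$-cycles of $K_{3,3}$ work. Let $s(e)=1$ when $e$ is directed from $A$ to $B$, and $s(e)=0$ otherwise. A $4$-cycle $Q$ is then oddly oriented if and only if $\sum_{e\in Q}s(e)$ is odd. Each edge lies in exactly four $4$-cycles, so the total over all nine is even, whereas a \PO\ would make it $9$.

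\textbf{Sufficiency: the genuine gap.} The structural claim that carries your argument is false as stated. The Heawood graph is a nonplanar brace that is \pfaff, hence \KTTF\ by the direction you already proved. Yet it has girth six, so it has no $4$-cycles at all and is not a $4$-cycle sum of planar braces. It also defeats your proposed dichotomy. Being cubic and nonplanar, it contains subdivisions of $K_{3,3}$; none of them can be rerouted into a conformal \bisub; and there is no $4$-cycle to separate along.

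The correct statement (Robertson--Seymour--Thomas; McCuaig) is that a \KTTF\ brace is either the Heawood graph or is obtained from planar braces by repeated trisums. With that theorem as a black box, plus an explicit \PO\ of the Heawood graph and the fact that trisums preserve the \pfaff\ property, your outline would go through. But that theorem is long and deep, and your few sentences of rerouting are nowhere near a proof of it. As written, the hard direction is assumed rather than proved. Little's original proof predates this structure theory and does not need it.

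\textbf{Two smaller repairs in the reduction to braces.} First, Theorem~\ref{thm:cubic-brick-tight-cut-reduction} is stated for cubic bricks, and the paper notes that its forward implication fails for $J=K_{3,3}$ (Figure~\ref{fig:cubic-brick-tight-cut-reduction}a). The claim you need is still true for bipartite $G$. By Theorem~\ref{thm:K4-decoration}, a \KTTB\ tight cut contraction makes $G$ $K_4(K_{3,3})$-based. Conformal minors of a bipartite graph are bipartite, and every graph in $K_4(K_{3,3})$ other than $K_{3,3}$ contains a triangle, so $G$ is \KTTB. Concretely, in the bipartite contraction the claw at the contracted vertex extends, by Theorem~\ref{thm:claw-theta-K4-comi}, to a conformal \bisub\ of $\Theta$, never of $K_4$.

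Second, reversing at the contracted vertex flips every edge of $C$ at once, and agreement on $C$ is not enough anyway. Suppose \pfaff\ orientations $D_1$ and $D_2$ of the two $C$-contractions agree on $C$. A conformal cycle of the combined orientation that crosses $C$ through $e$ and $f$ is oddly oriented if and only if $e$ and $f$ are directed the same way across $C$. So normalize each $D_i$ so that every edge of $C$ points from $X$ to $\overline{X}$. Do this by reversing at those neighbours of the contracted vertex whose edges to it point the wrong way. This is consistent because, in any \PO, all edges joining the contracted vertex to a fixed neighbour are equally directed, since they form conformal $2$-cycles.
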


Thus, Little's Theorem provides a characterization of \pfaff\ bipartite {\mcg}s in terms of forbidden {\comi}s,
and laid the foundation of the quest for more such results.

A few decades later, Fischer and Little~\cite{fili01} characterized \pfaff\ `\NB' graphs in terms
of forbidden {\Smi}s, as well as in terms of forbidden {\comi}s.
More recently, Carvalho, Lucchesi and Murty~\cite{clm12} generalized Little's Theorem (\ref{thm:little})
and characterized \pfaff\ `solid' graphs in terms of forbidden {\Smi}s; in their paper, they also stated
a characterization in terms of forbidden {\comi}s; however, they omitted the proof.
In the next two sections, we use our Main Theorem~(\ref{thm:conformal-minors-vs-S-minors-cubic-long-version})
to deduce the `\comi\ version' from the `\Smi\ version' --- for both of these results.

\subsection{Pfaffian solid graphs}

A \mcg~$G$ is {\em solid} if each of its separating cuts is a tight cut;
otherwise $G$ is {\em nonsolid}.
In particular, a brick is solid if and only if it is devoid of nontrivial separating cuts.
It is easily proved that every bipartite \mcg\ is solid; see \cite[Corollary 1.9]{lckm18}.
Thus solid graphs are a generalization of bipartite graphs.

In order to familiarize ourselves
with the notion, let us figure out which of the ten {\KFdec}s of $K_{3,3}$ are solid,
and which ones are nonsolid;
we will find this useful soon. Of course, $K_{3,3}$ itself is solid.
The following consequence of \cite[Theorem 2.25]{clm02} will come in handy;
see also \cite[Corollary 1.13]{lckm18}.

\begin{thm}
\label{thm:mcg-solid-iff-bricks-solid}
A \mcg\ is solid if and only if each of its bricks is solid.
\end{thm}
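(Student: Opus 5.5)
The statement to prove is Theorem~\ref{thm:mcg-solid-iff-bricks-solid}: a \mcg\ is solid if and only if each of its bricks is solid. Since the bricks are produced by the tight cut decomposition procedure, the plan is to reduce everything to a single step of that procedure. The key lemma is: if $C:=\partial(X)$ is a nontrivial tight cut of a \mcg~$G$, then $G$ is solid if and only if both $C$-contractions $G_1:=G/X$ and $G_2:=G/\overline{X}$ are solid. Granting this lemma, induction on $|V(G)|$ along any application of the procedure finishes the proof. Braces are bipartite and hence solid, so only the bricks matter. The Unique Tight Cut Decomposition Theorem guarantees that the choice of cuts is irrelevant.

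For the forward direction of the lemma (if $G_1$ is nonsolid then so is $G$), take a separating but non-tight cut $D:=\partial_{G_1}(Y)$ of $G_1$. Choose the shore $Y$ so that it avoids the contraction vertex $x$, and regard $D$ as a cut of $G$ via the same set $Y\subseteq \overline{X}$. Each \pema\ of $G_1$ extends to a \pema\ of $G$ by gluing with a \pema\ of $G_2$ that agrees on the single edge of $C$. Conversely, each \pema\ of $G$ restricts to \pema s of both contractions. It follows that:
\begin{enumerate}[(i)]
\item a \pema\ of $G_1$ meeting $D$ at least three times yields a \pema\ of $G$ meeting $D$ at least three times, so $D$ is not tight in $G$;
\item $D$ is separating in $G$, using the criterion that a cut is separating iff each edge lies in a \pema\ meeting the cut exactly once. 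Edges of $G_1$ are handled by extension; edges inside $X$ are handled by first picking a \pema\ of $G_2$ containing the edge and then completing in $G_1$ through the corresponding edge of $C$, while keeping $|M\cap D|=1$.
\end{enumerate}
The point to watch is in (ii): when completing through the prescribed edge of $C$, we need a \pema\ of $G_1$ that uses that edge and still meets $D$ once. This is supplied by the separating property of $D$ in $G_1$, applied to that edge of $\partial(x)$.

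The reverse direction (if $G$ is nonsolid then some $C$-contraction is nonsolid) is the main obstacle. Start from a separating non-tight cut $D:=\partial(Y)$ of $G$ that crosses $C$. First I would try uncrossing: in a \mcg, for a tight cut $\partial(X)$ and a separating cut $\partial(Y)$ with an odd intersection, the cuts $\partial(X\cap Y)$ and $\partial(X\cup Y)$ are again separating. This is the standard counting $|M\cap\partial(X\cap Y)|+|M\cap\partial(X\cup Y)|\le |M\cap C|+|M\cap D|$, and here $|M\cap C|=1$. Moreover, if $D$ is not tight, one of the uncrossed cuts is non-tight, witnessed by a \pema\ with $|M\cap D|\ge 3$. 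Replacing $D$ by that cut, we get a separating non-tight cut lying inside one shore of $C$. It then projects to a separating non-tight cut of the corresponding $C$-contraction, by the restriction argument from the forward direction run in reverse. Parity cases (where $|X\cap Y|$ is even, so one must use $X\cap\overline{Y}$ instead) need to be checked. This step is essentially \cite[Theorem 2.25]{clm02}, and if the uncrossing becomes delicate I would simply cite it.
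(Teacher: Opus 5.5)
The paper does not prove this statement; it quotes it from \cite{clm02} (see also \cite{lckm18}). Your plan is the natural route to it: reduce to a single nontrivial tight cut $C$, then induct along the tight cut decomposition. Most of the pieces check out:
\begin{itemize}
\item the forward direction, which extends perfect matchings across $C$ and uses the criterion that a cut is separating if and only if every edge lies in a perfect matching meeting it exactly once;
\item the claim that $\partial(X\cap Y)$ and $\partial(X\cup Y)$ are separating when $|X\cap Y|$ is odd;
\item the projection of a separating non-tight cut whose shore lies inside a shore of $C$;
\item the parity fix of replacing $Y$ by $\overline{Y}$.
\end{itemize}

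There is one genuine gap. You claim that if $D$ is not tight, then one of the uncrossed cuts is non-tight, witnessed by a perfect matching $M$ with $|M\cap D|\ge 3$. This does not follow from the inequality you quote. Write $E[X\setminus Y,\,Y\setminus X]$ for the set of edges joining $X\setminus Y$ and $Y\setminus X$. The exact identity, valid for every perfect matching $M$, is
\[
|M\cap\partial(X\cap Y)|+|M\cap\partial(X\cup Y)|+2\,|M\cap E[X\setminus Y,\,Y\setminus X]| = |M\cap C|+|M\cap D|.
\]
The last term on the left can absorb the whole excess. Counting alone does not exclude a perfect matching that uses exactly three crossing edges:
\begin{itemize}
\item one from $X\cap Y$ to $X\setminus Y$;
\item one from $X\setminus Y$ to $Y\setminus X$;
\item one from $Y\setminus X$ to $\overline{X}\cap\overline{Y}$.
\end{itemize}
Such a matching meets $D$ three times, yet meets each of $C$, $\partial(X\cap Y)$ and $\partial(X\cup Y)$ only once. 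So your witness $M$ need not witness anything.

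You need one extra observation: no edge of $G$ joins $X\setminus Y$ and $Y\setminus X$. It follows from the criterion you already use. Suppose $e$ were such an edge, and take a perfect matching $M_e$ containing $e$ with $|M_e\cap D|=1$. Since $|M_e\cap C|=1$ and both uncrossed cuts are odd, the left side of the identity is at least $1+1+2=4$, while the right side equals $2$, a contradiction.

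With the middle term gone, any perfect matching $M$ with $|M\cap D|\ge 3$ satisfies $|M\cap\partial(X\cap Y)|+|M\cap\partial(X\cup Y)|\ge 4$. Both summands are odd, so one of the two cuts meets $M$ at least three times and is non-tight. The rest of your argument then goes through.

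A minor point: bricks are taken as underlying simple graphs, so you should also note that solidity is unaffected by adding or deleting parallel edges. This is immediate from the same criterion.
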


It is easily observed that,
for each of the nonbipartite graphs shown in Figure~\ref{fig:solid-K4-decorations-of-K33},
its tight cut decomposition yields (one or more) copies of $K_4$ and (one copy of) $K_{3,3}$.
Clearly, $K_4$ is solid; whence, by Theorem~\ref{thm:mcg-solid-iff-bricks-solid},
each graph shown in Figure~\ref{fig:solid-K4-decorations-of-K33} is solid.

\begin{figure}[!htb]
\centering

\begin{tikzpicture}

\draw (0,0) -- (0,2);
\draw (0,0) -- (4,2);

\draw (2,0.3) -- (1.6,-0.3) -- (2.4,-0.3) -- (2,0.3);
\draw (1.6,-0.3) -- (0,2);
\draw (2.4,-0.3) -- (4,2);

\draw (4,0) -- (0,2);
\draw (4,0) -- (4,2);

\draw (2,1.7) -- (1.6,2.3) -- (2.4,2.3) -- (2,1.7);
\draw (1.6,2.3) -- (0,0);
\draw (2.4,2.3) -- (4,0);

\draw (2,0.3) -- (2,1.7);

\draw (0,0)node{};
\draw (4,0)node{};

\draw (2,0.3)node{};
\draw (1.6,-0.3)node{};
\draw (2.4,-0.3)node{};

\draw (0,2)node{};
\draw (4,2)node{};

\draw (2,1.7)node{};
\draw (1.6,2.3)node{};
\draw (2.4,2.3)node{};

\draw (2,-1.5)node[nodelabel]{$K_{3,3}^{\{a_2,b_2\}} \odot K_4$};

\end{tikzpicture}
\vspace*{-0.4in}
\caption{The smallest \KFdec\ of~$K_{3,3}$ that is a nonsolid brick}
\label{fig:smallest-nonsolid-K4-decoration-of-K33}
\end{figure}

For the remaining six graphs in $K_4(K_{3,3})$, observe that each of them
is a \KFdec\ of the graph~$G:=K_{3,3}^{\{a_2,b_2\}} \odot K_4$
shown in Figure~\ref{fig:smallest-nonsolid-K4-decoration-of-K33}.
One may easily verify that $G$ is a brick --- for instance, by using Theorem~\ref{thm:elp-brick-characterization}.
Furthermore, $G$ is nonsolid since it is obtained by splicing $K_{3,3} \odot K_4$ and $K_4$, and the corresponding
splicing cut is a nontrivial separating cut.
A similar reasoning, combined with the following result~\cite[Corollary 2.8]{clm05},
implies that each of these six graphs is a nonsolid brick.

\begin{prp}
\label{prp:splicing-of-two-cubic-bricks}
The splicing of any two cubic bricks yields a cubic brick.
\end{prp}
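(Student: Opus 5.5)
We must prove that the splicing of two cubic bricks $G_1$ and $G_2$, at vertices $v_1$ and $v_2$ under any bijection $\pi$, is a cubic brick. Let $G:=(G_1 \odot G_2)_{v_1,v_2,\pi}$ and $C:=\partial_G(X)$ with $X:=V(G_1)-v_1$. Cubicity is immediate: each vertex of $G$ keeps its three incident edges, with the edges of $\partial(v_1)$ and $\partial(v_2)$ identified via $\pi$. The order of $G$ is $|V(G_1)|+|V(G_2)|-2 \geq 6$. By Theorem~\ref{thm:elp-brick-characterization}, it suffices to show that $G$ is $3$-connected and bicritical.

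\emph{$3$-connectivity.} Suppose $S$ is a vertex cut of $G$ with $|S|\le 2$. Because $G_1$ is $3$-connected, $G_1-v_1$ is $2$-connected, so $G[X]$ is connected; likewise $G[\overline{X}]$ is connected. If $S\subseteq X$, then $G_2-v_2=G[\overline{X}]$ is connected. Every component of $G[X]-S$ has a neighbour in $\overline{X}$, since in $G_1-S$ the vertex $v_1$ is joined to every component ($G_1$ being $3$-connected). This contradicts $S$ being a cut; the case $S\subseteq\overline{X}$ is symmetric. If $|S\cap X|=|S\cap\overline{X}|=1$, then each of $G[X]-S$ and $G[\overline{X}]-S$ is connected, and they are joined by at least one edge of $C$, because the three edges of $C$ have distinct ends on each side (the graphs are cubic and $3$-connected, so they have no parallel edges at $v_i$), and at most two of them meet $S$. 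Hence $G$ is $3$-connected.

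\emph{Bicriticality.} Fix distinct $u,w\in V(G)$. If both lie in $X$, note that $G_1-u-w$ has a perfect matching $N_1$, which uses some edge $e\in\partial(v_1)$. Since $G_2$ is matching covered, $G_2$ has a perfect matching $N_2$ containing $\pi(e)$. Then $(N_1-e)\cup(N_2-\pi(e))\cup\{e'\}$, where $e'$ is the spliced edge, is a perfect matching of $G-u-w$. If $u\in X$ and $w\in\overline{X}$, we need a perfect matching of $G-u-w$ using either zero or two edges of $C$; parity forces it to use an even number. Take the zero-edge option: $G_1-u-v_1$ has a perfect matching by bicriticality of $G_1$, and likewise $G_2-w-v_2$; their union is a perfect matching of $G-u-w$. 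Thus $G$ is bicritical, and together with $3$-connectivity this shows $G$ is a brick.

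The main obstacle is the $3$-connectivity check in the mixed case, where one must rule out parallel edges at $v_i$ and confirm that some edge of $C$ avoids $S$. This holds because simple cubic $3$-connected graphs have distinct neighbours at each vertex. Bricks may have multiple edges in general, but a cubic brick with a parallel pair at $v_1$ would have a $2$-vertex cut, which is why distinct neighbours follow from $3$-connectivity (the case $K_4$ being simple anyway). Bicriticality is routine via the gluing of matchings described above.
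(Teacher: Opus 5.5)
Your proof is correct. The paper does not prove this proposition; it cites it from \cite[Corollary 2.8]{clm05}, so there is no in-paper argument to match. You instead give a self-contained verification through Theorem~\ref{thm:elp-brick-characterization}.

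\begin{itemize}
\item Bicriticality: you glue a perfect matching of $G_1-u-w$ (respectively $G_1-u-v_1$) with a suitable perfect matching of $G_2$ (respectively $G_2-w-v_2$).
\item $3$-connectivity: you do a case analysis on how a set $S$ with $|S|\le 2$ meets the two shores.
\end{itemize}

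What your route buys is transparency about where the hypotheses enter. The bicriticality argument works for the splicing of any two bricks. Cubicity (through the fact that cubic bricks are simple) is used only in the mixed case $|S\cap X|=|S\cap\overline{X}|=1$. There it guarantees that the three edges of the splicing cut have pairwise distinct ends on each shore, so two vertices cannot cover the whole cut. This can genuinely fail for bricks with multiple edges. For example, double one edge $v_ia_i$ of $K_4$ for $i\in\{1,2\}$. Splice so that both parallel edges at $v_1$ go to the two simple edges at $v_2$, and vice versa. Then $\{a_1,a_2\}$ is a $2$-vertex cut of the result. The citation buys brevity.

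One small omission: in the bicriticality part you treat $u,w\in X$ and the mixed case, but not $u,w\in\overline{X}$. That case follows by the symmetric argument, with the roles of $G_1$ and $G_2$ (and $\pi^{-1}$ in place of $\pi$) exchanged, and you should say so explicitly.
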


We let $\mathcal{S}$ denote the set comprising the four graphs shown in
Figure~\ref{fig:solid-K4-decorations-of-K33}.
Our discussion above proves the following statement.

\begin{prp}
\label{prp:solid-K4-decorations-of-K33}
Among the ten graphs in $K_4(K_{3,3})$, the only ones that are solid are those that belong to $\mathcal{S}$. \qed
\end{prp}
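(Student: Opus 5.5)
The statement to prove is Proposition~\ref{prp:solid-K4-decorations-of-K33}: among the ten graphs of $K_4(K_{3,3})$, exactly the four in $\mathcal{S}$ are solid. I would split the ten decorations $K_{3,3}^T \odot K_4$ according to whether $T$ contains vertices from both colour classes. The symmetries of $K_{3,3}[A,B]$ permute each class arbitrarily and may swap $A$ with $B$. So the isomorphism type of a decoration depends only on the unordered pair $\{|T\cap A|,|T\cap B|\}$. This gives exactly ten types: $\{0,0\},\{0,1\},\{0,2\},\{0,3\}$ and the six pairs with both entries positive. The four one-sided types are precisely the graphs in $\mathcal{S}$.

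\textbf{The four graphs in $\mathcal{S}$ are solid.} Suppose $T\subseteq A$. For each $a\in T$, the triangle replacing $a$ has shore $X$, and $\partial(X)$ is a $3$-cut. It is tight because $G-X$ still has colour classes $B$ (of size $3$) and $A-\{a\}$ together with the other triangles. A parity/counting argument then shows that every perfect matching uses exactly one edge of $\partial(X)$. Concretely: $|X|=3$ is odd, so an odd number of cut edges is used. If all three were used, the three vertices of $B$ would all be covered by edges from $X$. That would leave the original vertices of $A-T$ with no available partners, since each such vertex's neighbours all lie in $B$.

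Contracting these tight cuts decomposes the graph into copies of $K_4$ and one $K_{3,3}$. Both are solid, so Theorem~\ref{thm:mcg-solid-iff-bricks-solid} shows that the whole graph is solid. This is the argument already sketched in the text.

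\textbf{The remaining six graphs are nonsolid bricks.} Each of them has some $a_i\in T$ and some $b_j\in T$; by symmetry I take $a_2,b_2$. Hence each is a $K_4$-decoration of $G_0:=K_{3,3}^{\{a_2,b_2\}}\odot K_4$, obtained by replacing further vertices by triangles.

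First I show $G_0$ is a brick, using Theorem~\ref{thm:elp-brick-characterization}. For $3$-connectivity, the two triangles and the edge joining them prevent any $2$-separation. For bicriticality, I check the removal of each pair of vertices up to symmetry; this is routine but tedious.

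Each further decoration is a splicing of the previous graph with the cubic brick $K_4$. By Proposition~\ref{prp:splicing-of-two-cubic-bricks}, each such graph is therefore a cubic brick.

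Nonsolidity comes from a nontrivial separating cut. In any of these graphs, take the cut that splits off the $b_2$-triangle together with its attached structure, namely the splicing cut presenting the graph as (decoration of $K_{3,3}\odot K_4$)$\,\odot K_4$. By Proposition~\ref{prp:splicing-and-S-minors} this cut is separating. It is nontrivial because its shore is a triangle. It is not tight, since a brick has no nontrivial tight cuts. Hence each of the six graphs is nonsolid.

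\textbf{Main obstacle.} The only genuine work is verifying that $G_0$ is bicritical. I would do this by exhibiting perfect matchings of $G_0-u-v$ for each orbit of vertex pairs under $\mathsf{Aut}(G_0)$. Alternatively, I would note that $G_0$ is itself the splicing of $K_{3,3}\odot K_4$ and $K_4$ and try a direct argument. The simplest route, however, is case-checking, since $G_0$ has only ten vertices.
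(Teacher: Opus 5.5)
Your proposal is correct and follows the paper's own argument. For the four one-sided decorations in $\mathcal{S}$, the triangle cuts are tight, so the tight cut decomposition yields copies of $K_4$ and one $K_{3,3}$, and Theorem~\ref{thm:mcg-solid-iff-bricks-solid} applies. The other six are $K_4$-decorations of the brick $K_{3,3}^{\{a_2,b_2\}}\odot K_4$ (checked via Theorem~\ref{thm:elp-brick-characterization}), hence bricks by Proposition~\ref{prp:splicing-of-two-cubic-bricks}. In each of them, the nontrivial splicing cut at the $b_2$-triangle is separating and cannot be tight.

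One small repair is needed. Your tightness argument for a triangle $X$ relies only on the original vertices of $A-T$, and there are none when $T=A$. Instead, note that every component of $G-X-B$ (an original vertex or another triangle) is odd and has all its outside neighbours in $B$. So a perfect matching using all three edges of $\partial(X)$ would leave these components unmatchable.
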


The notion of solid graphs was introduced by Carvalho, Lucchesi and Murty in \cite{clm02};
their significance arises from the fact that they are a generalization of bipartite graphs and they
share some of the nice properties. Consequently, they often admit easier answers to certain problems
that are yet unsolved for all nonbipartite graphs. A case in point is the following
characterization of \pfaff\ solid graphs in terms of forbidden {\Smi}s --- established by the same authors~\cite{clm12}.

\begin{thm}
\label{thm:solid-pfaffian-forbidden-S-minors}
A solid \mcg~$G$ is \pfaff\ if and only if $K_{3,3}$ is not an \Smi\ of~$G$.
\end{thm}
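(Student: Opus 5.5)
The argument splits into the two implications. For each, I would reduce to bricks and braces by the tight cut decomposition, and then handle the two kinds of pieces separately.

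\emph{Necessity} (if $K_{3,3}$ is an \Smi\ of~$G$ then $G$ is \nonpfaff). Here solidity plays no role. It suffices to show that pfaffianness is preserved under the two operations that define {\Smi}s; then $K_{3,3}$, which is \nonpfaff, could not be an \Smi\ of a \pfaff\ graph.
\begin{enumerate}
\item Deleting a removable ear yields a conformal subgraph. A \PO\ restricts to a \PO\ of it, since every conformal cycle of a conformal subgraph is a conformal cycle of the whole graph.
\item Let $C:=\partial(X)$ be a separating cut and $D$ a \PO\ of~$G$. Give $G/X$ the orientation inherited from~$D$. Because $G/\overline{X}$ is \mc, every \pema\ of $G/X$ extends to a \pema\ of~$G$. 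Hence every conformal cycle $Q$ of $G/X$ through~$x$ lifts to a conformal cycle of~$G$: replace $x$ by an alternating path inside $G[X]$, obtained from a conformal cycle of $G/\overline{X}$ through the two edges of $Q\cap C$ (Lemma~\ref{lem:conformal-cycle-containing-adjacent-edges}).
\item The delicate point is that the parity of the lifted cycle must not depend on the path chosen in~$X$. I would fix this by reversing all edges of~$C$ whenever necessary, and compare two lifts via the symmetric difference of the corresponding perfect matchings.
\end{enumerate}
If this becomes messy, I would instead invoke the known fact that pfaffianness is closed under weak {\mami}s, since these coincide with {\Smi}s for {\mcg}s.

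\emph{Sufficiency}. Let $G$ be solid with no $K_{3,3}$ \Smi.
\begin{enumerate}
\item A \mcg\ is \pfaff\ if and only if each of its $C$-contractions along a tight cut is \pfaff. By induction along the tight cut decomposition, it therefore suffices to show that each brick and brace of~$G$ is \pfaff.
\item Each such brick or brace is an \Smi\ of~$G$, so none has $K_{3,3}$ as an \Smi. Each brick is solid by Theorem~\ref{thm:mcg-solid-iff-bricks-solid}.
\item For a brace~$B$: a \comi\ is an \Smi\ (Corollary~\ref{cor:comi-implies-mami} and Proposition~\ref{prp:mami-implies-Smi}). So $B$ is \KTTF, and Little's Theorem (\ref{thm:little}) makes $B$ \pfaff.
\end{enumerate}

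It remains to prove that a solid brick with no $K_{3,3}$ \Smi\ is \pfaff, and I expect this to be the main obstacle. My first attempt would use an \ed\ $(G_1,\dots,G_r)$ as in Lov{\'a}sz's Theorem (\ref{thm:nonbip-mcg-K4-prism-ear-dec}), starting from a \bisub\ of $K_4$ or of $\overline{C_6}$. Along this sequence I would extend a \PO\ ear by ear, orienting each new ear so that one new conformal cycle through it is oddly oriented. Then I would argue that every other new conformal cycle has the correct parity.

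The intended source of this parity control is solidity. For two vertex-disjoint odd cycles $Q_1,Q_2$ of a solid brick, the graph $G-V(Q_1)-V(Q_2)$ is not matchable; otherwise one would obtain a nontrivial separating cut that is not tight. This makes conformal cycles behave as in bipartite graphs.

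A failure of the parity condition at some ear should then produce two conformal cycles of the same parity meeting like a $K_{3,3}$ configuration. After contracting shores of separating cuts, for instance around the $K_4$ or $\overline{C_6}$ core and around odd cycles, this would yield $K_{3,3}$ as an \Smi, which is a contradiction. Making this last step precise, namely converting ``two conformal cycles of the wrong parity'' into an explicit $K_{3,3}$ \Smi\ using only separating-cut contractions, is where I expect most of the work to lie.
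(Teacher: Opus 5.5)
The paper does not prove this statement. It quotes it from Carvalho, Lucchesi and Murty~\cite{clm12} and uses it as a black box, so your proposal has to be judged on its own.

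\textbf{What works.} The routine parts are correct. Ear deletion preserves pfaffianness. The reduction of sufficiency to bricks and braces along tight cuts is valid. The brace case follows from Little's Theorem~(\ref{thm:little}) exactly as you say.

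\textbf{A fix in the necessity direction.} In the separating-cut step, reversing \emph{all} edges of~$C$ achieves nothing, since every conformal cycle through~$x$ uses exactly two edges of~$C$. Also, the parity of a lift through a fixed pair of cut edges is automatically consistent; the real issue is making all pairs right simultaneously. The clean argument uses signs of perfect matchings:
\begin{itemize}
\item Let $M$ and $N$ be perfect matchings of $G/X$ that use the same edge of~$C$.
\item Every cycle of $M \Delta N$ avoids~$x$, and it is a conformal cycle of~$G$ because $G/\overline{X}$ is \mc. Hence $M$ and $N$ have the same sign.
\item The sign of a perfect matching therefore depends only on its edge in~$C$. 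Reversing the edges of~$C$ whose class has the wrong sign gives a \PO\ of~$G/X$.
\end{itemize}
Your fallback citation would also suffice.

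\textbf{The genuine gap: solid bricks.} This case is the entire content of the theorem, and the final part of your proposal describes a hope rather than an argument. Nothing explains how a parity failure, while extending an orientation ear by ear, produces $K_{3,3}$ as an \Smi. Moreover, your suggested contractions ``around the $K_4$ core and around odd cycles'' cannot be performed in~$G$ itself. In a solid brick every separating cut is tight, hence trivial. (Also, $\overline{C_6}$ cannot start the ear decomposition, since it would be a nonsolid conformal minor.)

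\textbf{What is missing.} You need the main theorem of~\cite{clm12}, for which your remark about disjoint odd cycles is exactly the right invariant. It says that every \nonpfaff\ brick all of whose proper {\Smi}s are \pfaff\ has two vertex-disjoint odd cycles whose deletion leaves a matchable graph, and so is nonsolid. Given that, sufficiency is short:
\begin{itemize}
\item Let $H$ be a minimal \nonpfaff\ \Smi\ of a \nonpfaff\ solid graph~$G$.
\item $H$ is solid. Separating cuts of solid graphs are tight, so Theorems~\ref{thm:mcg-solid-iff-bricks-solid} and~\ref{thm:conformal-minors-solid} apply at each step.
\item By minimality, $H$ has no nontrivial tight cut, and by the cited theorem it is not a brick.
\item So $H$ is a \nonpfaff\ brace, and Little's Theorem together with minimality gives $H \cong K_{3,3}$.
\end{itemize}
Your proposal supplies neither this theorem nor a working substitute for it.
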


The above theorem may be viewed as a generalization of Little's Theorem
as explained in \cite{clm12}. The authors also stated a characterization of \pfaff\ solid graphs
in terms of forbidden {\comi}s (Theorem~\ref{thm:solid-pfaffian-forbidden-conformal-minors};
however, they omitted its proof (to limit the length).
As an application of our Main Theorem (\ref{thm:conformal-minors-vs-S-minors-cubic-long-version}),
we will deduce Theorem~\ref{thm:solid-pfaffian-forbidden-conformal-minors}
from Theorem~\ref{thm:solid-pfaffian-forbidden-S-minors}. In order to do so,
we just need one more ingredient stated below; see \cite[Corollary 1.38]{lckm18}.

\begin{thm}
\label{thm:conformal-minors-solid}
Every conformal minor of a solid \mcg\ is also solid.
\end{thm}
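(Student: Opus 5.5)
The statement is that every conformal minor of a solid \mcg\ is again solid. I would prove it by showing that solidity is preserved by each of the two operations in the Proposition that characterizes {\comi}s: deletion of a removable ear, and restricted bi-contraction. Transitivity along the sequence of operations then finishes the proof. Equivalently, I may work directly with a conformal subgraph $H$ of a solid \mcg~$G$ that is a \bisub\ of~$J$, and show first that $H$ is solid and then that $J$ is solid.

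The bi-subdivision step is the easy one. A restricted bi-contraction of a degree-two vertex~$v$ with neighbours $u,w$ removes a tight cut $\partial(\{u,v,w\})$ whose contraction is a $C_4$-like bipartite piece, or simply $\overline{X}$ shrunk. By the Unique Tight Cut Decomposition Theorem, the bricks of $H$ and of the graph obtained from it coincide. Theorem~\ref{thm:mcg-solid-iff-bricks-solid} then shows that one graph is solid if and only if the other is. So $J$ is solid if and only if $H$ is.

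The main step is showing that a conformal \mc\ subgraph $H$ of a solid $G$ is solid. Suppose $D:=\partial_H(Y)$ is a separating cut of~$H$ that is not tight. Then some \pema\ $N$ of~$H$ satisfies $|N\cap D|\ge 3$.
\begin{enumerate}
\item I would extend $Y$ to a shore $X$ of~$G$ using a fixed \pema\ $M_0$ of $G-V(H)$. Choosing $X$ correctly is the delicate point: I would take $X:=Y\cup Z$, where $Z$ is chosen so that no edge of $M_0$ crosses $\partial_G(X)$. The simplest attempt is to split $V(G)-V(H)$ according to a suitable choice; this need not directly work, so the alternative is to use the characterization that a cut $C$ is separating if and only if, for every edge $e$, some \pema\ containing $e$ meets $C$ exactly once.
\item I would then verify that $C:=\partial_G(X)$ is separating in~$G$. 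For an edge of $H$, I take a \pema\ of $H$ witnessing separability of~$D$ and add $M_0$. For edges outside~$H$, I need to use that $G$ is \mc\ together with an ear decomposition of~$G$ passing through~$H$ (the strengthened ear decomposition result in Section~\ref{sec:conformal-minors}), adding ears one at a time.
\item Finally, $N\cup M_0$ meets $C$ in at least three edges, so $C$ is a non-tight separating cut of~$G$, contradicting solidity.
\end{enumerate}

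Via the ear-decomposition viewpoint, the cleanest route is induction on ears: it suffices to show that if $G$ is solid and $R$ is a removable ear, then $G-R$ is solid. Given a non-tight separating cut $D$ of $G-R$, I would place the internal vertices of the ear(s) on sides so that each ear path crosses the resulting cut an even or minimal number of times. A single ear with both ends on the same side can be placed entirely on that side. An ear with ends on opposite sides crosses once, and I would put its internal vertices so that the remaining segments pair up by internal matching edges. I expect the main obstacle to be confirming that the resulting cut of~$G$ is separating, especially for double ears. I would handle this by the \pema\ criterion above, extending the \pema s of $G-R$ by the internal matchings of the ears, and using matchings of $G$ that include ear ends for the ear edges themselves.
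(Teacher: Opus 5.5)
\textbf{Gap: lifting a non-tight separating cut across an ear.} Your reductions are fine. A restricted bi-contraction only splits off a bipartite tight-cut contraction, so by the Unique Tight Cut Decomposition Theorem and Theorem~\ref{thm:mcg-solid-iff-bricks-solid} it preserves solidity, and a conformal subgraph is reached by deleting removable ears. (The paper does not prove this statement; it imports it from \cite[Corollary 1.38]{lckm18}.)

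The whole content of the theorem is therefore the ear step, and your mechanism for it does not work. You take an arbitrary non-tight separating cut $D=\partial(Y)$ of $G-R$, place the ear's internal vertices to get a cut $C$ of $G$, and check that every edge lies in a perfect matching meeting $C$ exactly once. This fails at the ear edges, already for single ears. A perfect matching of $G$ using the ear's end edges restricts to a perfect matching of $G-R$ minus the ear's ends, and nothing forces that matching to meet $D$ only once.

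Here is a concrete counterexample. Let $G'$ have vertex set $\{t_1,t_2,t_3,z_1,\dots,z_5,u,w\}$, with triangles $t_1t_2t_3$ and $z_3z_4z_5$ and edges $t_iz_i$ ($i=1,2,3$), $uz_1,uz_2,uz_4,wz_1,wz_2,wz_5$.
\begin{enumerate}
\item $G'$ is a $2$-connected cubic graph, so $D:=\partial(\{t_1,t_2,t_3\})$ is a separating $3$-cut by Proposition~\ref{prp:cubic-mcg-3cuts-are-separating-cuts}.
\item $D$ is not tight, because $\{t_1z_1,t_2z_2,t_3z_3,uz_4,wz_5\}$ is a perfect matching meeting it three times.
\item Let $G:=G'+uw$. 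Since $\{uw,t_1z_1,t_2z_2,t_3z_3,z_4z_5\}$ is a perfect matching, $uw$ is a removable single ear with $G-uw=G'$.
\item The ear has no internal vertices, so your construction is forced to give $C=\partial_G(\{t_1,t_2,t_3\})$.
\item In every perfect matching of $G$ containing $uw$, $z_1$ must go to $t_1$ and $z_2$ to $t_2$, hence $t_3$ to $z_3$. So every such matching meets $C$ three times, and $C$ is not separating.
\end{enumerate}
$G$ is of course nonsolid: $\partial(\{z_3,z_4,z_5\})$ is a non-tight separating cut. But that cut is not a lift of $D$. An argument of your shape would need a rule for choosing $D$, and none is given. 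Your first variant (choose $Z$, then handle the edges outside $H$ ear by ear) hits the same obstacle.

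\textbf{Missing idea.} You need a certificate of non-solidity that is automatically inherited by conformal supergraphs, rather than a lifted cut. One that works starts from the theorem of Carvalho, Lucchesi and Murty: a brick is solid iff its perfect matching polytope is $\{x\ge 0: x(\partial(v))=1\ \forall v\}$. Combine this with gluing of perfect matching polytopes along tight cuts, and with uncrossing of crossing tight cuts. One can then show that a matching covered graph $G$ is solid iff its perfect matching polytope equals $T(G):=\{x\ge 0: x(\partial(v))=1\ \forall v,\ x(C)=1 \mbox{ for every tight cut } C\}$.

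This description passes to a conformal matching covered subgraph $H$, with $M_0$ a perfect matching of $G-V(H)$:
\begin{enumerate}
\item Every tight cut of $G$ either restricts to a tight cut of $H$ and misses $M_0$, or misses $E(H)$ and meets $M_0$ once.
\item So for $x\in T(H)$, the vector equal to $x$ on $E(H)$, $1$ on $M_0$ and $0$ elsewhere lies in $T(G)$, which is the perfect matching polytope of $G$ when $G$ is solid.
\item Every perfect matching in its support is $M_0$ plus a perfect matching of $H$, so $T(H)$ is the perfect matching polytope of $H$, and hence $H$ is solid.
\end{enumerate}
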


We are now ready to deduce (from Theorem~\ref{thm:solid-pfaffian-forbidden-S-minors})
the characterization of \pfaff\ solid graphs in terms of
forbidden {\comi}s due to Carvalho, Lucchesi and Murty~\cite{clm12}.

\begin{thm}
\label{thm:solid-pfaffian-forbidden-conformal-minors}
A solid \mcg~$G$ is \pfaff\ if and only if $G$ is $\mathcal{S}$-free.
\end{thm}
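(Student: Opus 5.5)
We combine Theorem~\ref{thm:solid-pfaffian-forbidden-S-minors} with the Main Theorem (\ref{thm:conformal-minors-vs-S-minors-cubic-long-version}), applied with $\mathcal{H}:=\{K_{3,3}\}$. Let $\mathcal{G}$ be the class of matching covered graphs that do not have $K_{3,3}$ as an \Smi. The Main Theorem then says that, for any matching covered graph $G$, the graph $K_{3,3}$ is not an \Smi\ of~$G$ if and only if $G$ is $K_4(K_{3,3})$-free. Restricting to solid graphs, Theorem~\ref{thm:solid-pfaffian-forbidden-S-minors} yields the following: a solid \mcg~$G$ is \pfaff\ if and only if $G$ is $K_4(K_{3,3})$-free. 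So it remains to show that a solid $G$ is $K_4(K_{3,3})$-free exactly when it is $\mathcal{S}$-free.

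One direction is immediate. Since $\mathcal{S}\subseteq K_4(K_{3,3})$, every $K_4(K_{3,3})$-free graph is $\mathcal{S}$-free.

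For the converse, let $G$ be solid and $\mathcal{S}$-free, and suppose for contradiction that some $J\in K_4(K_{3,3})$ is a \comi\ of~$G$.
\begin{enumerate}[(i)]
\item By Theorem~\ref{thm:conformal-minors-solid}, $J$ is solid, because it is a \comi\ of the solid graph $G$.
\item By Proposition~\ref{prp:solid-K4-decorations-of-K33}, the only solid graphs in $K_4(K_{3,3})$ are those in $\mathcal{S}$, so $J\in\mathcal{S}$.
\item Hence $G$ is $\mathcal{S}$-based, a contradiction.
\end{enumerate}
Therefore $G$ is $K_4(K_{3,3})$-free, and combining both directions with the previous paragraph proves the theorem.

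The only delicate point is the use of Proposition~\ref{prp:solid-K4-decorations-of-K33}: we need every member of $K_4(K_{3,3})-\mathcal{S}$ to be nonsolid. This was argued just before the statement. Each of the six remaining graphs is a \KFdec\ of $K_{3,3}^{\{a_2,b_2\}}\odot K_4$. That graph is a brick with a nontrivial separating cut arising from splicing. Repeated splicing with $K_4$ preserves being a cubic brick (Proposition~\ref{prp:splicing-of-two-cubic-bricks}) and keeps a nontrivial separating cut. A brick with a nontrivial separating cut has a separating cut that is not tight, so it is nonsolid. Everything else is a direct citation.
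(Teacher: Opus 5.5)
Your proposal is correct and is essentially the paper's proof. Theorem~\ref{thm:solid-pfaffian-forbidden-S-minors} plus the Main Theorem with $\mathcal{H}=\{K_{3,3}\}$ reduces the claim to $K_4(K_{3,3})$-freeness, and then Theorem~\ref{thm:conformal-minors-solid} together with Proposition~\ref{prp:solid-K4-decorations-of-K33} discards the six nonsolid decorations, leaving $\mathcal{S}$; you just spell out the two inclusions and the justification of Proposition~\ref{prp:solid-K4-decorations-of-K33}, which the paper states more tersely.
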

\begin{proof}
It follows immediately from Theorem~\ref{thm:solid-pfaffian-forbidden-S-minors} and
our Main Theorem (\ref{thm:conformal-minors-vs-S-minors-cubic-long-version})
that a solid \mcg~$G$ is \pfaff\ if and only if $G$ is $K_4(K_{3,3})$-free.
By invoking Theorem~\ref{thm:conformal-minors-solid} and
Proposition~\ref{prp:solid-K4-decorations-of-K33},
we conclude that a solid \mcg~$G$ is \pfaff\ if and only if $G$ is $\mathcal{S}$-free.
\end{proof}

\subsection{Pfaffian \NB\ graphs}

A \mcg~$G$ is {\em \NB} if it has a removable double ear, say $(P,Q)$,
such that \mbox{$G-P-Q$} is bipartite and \mc.
To put it differently, a \mcg~$G$ is near-bipartite if and only if
$G$ admits an ear decomposition $(G_1, G_2, \dots, G_r)$ such that, for each $2 \leq i \leq r-1$, 
the graph $G_{i-1}$ is obtained from $G_i$ by deleting a removable single ear, whereas
$G_{r-1}$ is obtained from $G_r$ by deleting a removable double ear.
It follows that all of the subgraphs $G_1, G_2, \dots, G_{r-1}$ are bipartite;
whereas, the final graph $G:=G_r$ is nonbipartite.
(We shall abbreviate `\NB\ \mcg' to `\NB\ graph'.)

Note that a nonbipartite \mcg~$G$, with minimum degree three or more, is \NB\
if and only if it has a pair of edges $R:=\{\alpha,\beta\}$ such that $G-R$ is bipartite and \mc.
We refer to such a pair~$R$ as a {\em removable doubleton}.
It is easily observed $K_4$ and $\overline{C_6}$ are \NB\
bricks; each of them has precisely three removable doubletons.
With respect to a fixed removable doubleton~$R:=\{\alpha,\beta\}$, we use $A$ and $B$ to denote the
color classes of the {\em underlying bipartite graph} $G-R$ with $\alpha:=a_1a_2$ having both ends in
the white color class $A$,
and $\beta:=b_1b_2$ having both ends in the black color class $B$.

Figure~\ref{fig:Cubeplex-and-Twinplex} shows two cubic \NB\ bricks of order $12$,
denoted by $\Gamma_1$~and~$\Gamma_2$ respectively, that will play
an important role in this section; the names Cubeplex (for $\Gamma_1$) and Twinplex (for $\Gamma_2$)
are due to Norine and Thomas \cite{noth08}.
Each of them has a unique removable doubleton --- as indicated in the figures.
The following is easily verified using the figures provided.

\begin{prp}
\label{prp:automorphisms-of-Cubeplex-and-Twinplex}
$\{a_1\}, \{a_2\}$ and $\{b_1,b_2\}$ are orbits of ${\sf Aut}(\Gamma_1)$,
and $\{a_1,a_2,b_1,b_2\}$ is an orbit of ${\sf Aut}(\Gamma_2)$. \qed
\end{prp}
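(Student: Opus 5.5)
The statement concerns the Cubeplex $\Gamma_1$ and the Twinplex $\Gamma_2$, so the plan is to verify it from explicit descriptions of these two graphs as drawn in Figure~\ref{fig:Cubeplex-and-Twinplex}. In each case I would exhibit enough automorphisms to show that the claimed sets are contained in orbits. I would then use invariants to show that no automorphism moves a vertex outside its claimed set. Since each $\Gamma_i$ has a \emph{unique} removable doubleton $R=\{\alpha,\beta\}$, every automorphism maps $R$ to itself. Hence every automorphism permutes the four ends $\{a_1,a_2,b_1,b_2\}$, and maps the pair of ends of $\alpha$ to either itself or the pair of ends of $\beta$. This observation is the starting point for both parts.

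For $\Gamma_2$, I would read off from the figure one automorphism swapping $a_1$ with $a_2$, and one that exchanges $\alpha$ with $\beta$. The latter interchanges the colour classes of $\Gamma_2-R$. In the Twinplex drawing these should come from the left--right reflection and the rotation by $180^\circ$ (or a colour-reversing symmetry). Together they generate a group acting transitively on $\{a_1,a_2,b_1,b_2\}$. By the observation above, no automorphism can send an end of $R$ to a vertex outside this set, so $\{a_1,a_2,b_1,b_2\}$ is exactly an orbit.

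For $\Gamma_1$, I would first exhibit an automorphism exchanging $b_1$ and $b_2$ and fixing $a_1$ and $a_2$, such as a reflection of the Cubeplex drawing. The harder direction is showing that $a_1$ and $a_2$ are fixed points. First, I would show that no automorphism maps $\alpha$ to $\beta$. For this I would find an invariant distinguishing the two edges, for instance the number of triangles or of $4$-cycles through the edge, or the local structure at distance two from its ends. This distinction should be visible in the figure, since the Cubeplex is built asymmetrically with respect to the two colour classes. Every automorphism then fixes $\alpha$ and $\beta$ setwise. Second, I would show that $a_1$ and $a_2$ cannot be swapped, again by a local invariant: for example, the distance from $a_i$ to $\{b_1,b_2\}$, or the number of short cycles through $a_i$ that avoid $R$. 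Exhibiting a concrete invariant with different values at $a_1$ and $a_2$ finishes the argument.

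The main obstacle is the $\Gamma_1$ part: finding invariants that genuinely separate $a_1$ from $a_2$, and $\alpha$ from $\beta$. Exhibiting automorphisms is routine; ruling them out needs a careful count of short cycles or distances on the specific drawing. If simple cycle counts fail, I would compute the distance partitions (the number of vertices at each distance) from $a_1$, from $a_2$, and from each $b_j$, and compare them.
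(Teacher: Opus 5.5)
Your skeleton is right, and it is what the paper has in mind: the paper's entire proof is that the facts can be read off the figures. Because $R=\{\alpha,\beta\}$ is the unique removable doubleton, every automorphism permutes $\{a_1,a_2,b_1,b_2\}$. So one only has to exhibit automorphisms and, in $\Gamma_1$, show that $a_1$ and $a_2$ are fixed. That last step is the whole difficulty, and your proposal does not carry it out. Moreover, most of the invariants you list cannot do it:
\begin{itemize}
\item Neither $a_1$ nor $a_2$ lies on a triangle or a $4$-cycle.
\item Both are at distance $3$ from each of $b_1,b_2$.
\item Both have distance partition $(1,3,6,2)$. Your fallback separates $a_i$ from $b_j$, which have $(1,3,5,3)$, but not $a_1$ from $a_2$.
\end{itemize}
The reason is structural. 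The ball of radius two about either vertex is all of $\Gamma_1-\{b_1,b_2\}$, and that graph has an automorphism exchanging $a_1$ and $a_2$. So any invariant that ignores how $b_1,b_2$ attach must fail. Among your candidates, only the count of cycles avoiding $R$ works, and only at length six: $a_1$ lies on four $6$-cycles, $a_2$ on two.

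A cleaner invariant: the only $4$-cycles of $\Gamma_1$ are the inner and outer squares. Hence the vertices on no $4$-cycle are exactly $a_1$ and its three neighbours, and $a_1$ is the unique vertex with no neighbour on a $4$-cycle. Once $a_1$ is fixed, invariance of $R$ fixes $\alpha$ and hence $a_2$. This makes your separate $\alpha$-versus-$\beta$ step unnecessary, and that step's proposed tests would have failed anyway, since the triangle and $4$-cycle counts through $\alpha$ and $\beta$ are all zero.

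The automorphisms you propose to read off are also misidentified.

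In $\Gamma_1$, the left--right reflection fixes both $b_1$ and $b_2$. The map exchanging them is the ray-by-ray swap of the inner and outer squares ($b_2\leftrightarrow b_1$), fixing $a_1$ and its three neighbours. This is not a symmetry of the drawing.

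In $\Gamma_2$, label the octagon $O_0,\dots,O_7$ counterclockwise, with $O_0$ the black vertex on the ray through $a_1$. Then $a_1\sim O_0,O_4$, $b_1\sim O_1,O_5$, $a_2\sim O_2,O_6$ and $b_2\sim O_3,O_7$. The half-turn $O_k\mapsto O_{k+4}$ preserves each antipodal pair and therefore fixes all of $a_1,a_2,b_1,b_2$; it does not exchange $\alpha$ and $\beta$. The right map is $O_k\mapsto O_{k+1}$ (indices mod $8$). It extends to an automorphism cycling $a_1\to b_1\to a_2\to b_2\to a_1$, and by itself gives transitivity on the four vertices.
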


\begin{figure}[!htb]
\centering
\begin{tikzpicture}[scale=1.2]

\draw (0:0) to [out=135,in=45] (180:1.75);
\draw (0:0) to [out=45, in=135] (0:1.75);
\draw (0:0) to [out=225,in=135] (270:1.75);
\draw (250:1.4)node[nodelabel]{$\alpha$};

\draw (90:1) -- (180:1) -- (270:1) -- (0:1) -- (90:1);
\draw (90:2.5) -- (180:2.5) -- (270:2.5) -- (0:2.5) -- (90:2.5);

\draw (90:1) -- (90:2.5);
\draw (83:1.7)node[nodelabel]{$\beta$};
\draw (180:1) -- (180:1.75) -- (180:2.5);
\draw (270:1) -- (270:1.75) -- (270:2.5);
\draw (0:1) -- (0:1.75) -- (0:2.5);

\draw (0:0)node{};
\draw (330:0.3)node[nodelabel]{$a_1$};

\draw (90:1)node[fill=black]{};
\draw (90:0.7)node[nodelabel]{$b_2$};
\draw (180:1)node{};
\draw (270:1)node[fill=black]{};
\draw (0:1)node{};

\draw (180:1.75)node[fill=black]{};
\draw (270:1.75)node{};
\draw (280:1.75)node[nodelabel]{$a_2$};
\draw (0:1.75)node[fill=black]{};

\draw (90:2.5)node[fill=black]{};
\draw (90:2.8)node[nodelabel]{$b_1$};
\draw (180:2.5)node{};
\draw (270:2.5)node[fill=black]{};
\draw (0:2.5)node{};

\draw (270:3.3)node[nodelabel]{$\Gamma_1$};

\end{tikzpicture}
\hspace*{1in}
\begin{tikzpicture}[scale=1.2]

\draw (80:3.2) -- (40:3.5);
\draw (60:3.4)node[nodelabel]{$\beta$};

\draw (67.5:2) -- (40:3.5);
\draw (247.5:2) to [out=300,in=270] (0:2.8) to [out=90,in=270] (40:3.5);
\draw (40:3.5)node[fill=black]{};
\draw (40:3.8)node[nodelabel]{$b_1$};

\draw (157.5:2) to [out=90,in=180] (80:3.2);
\draw (337.5:2) to [out=45,in=330] (80:3.2);
\draw (80:3.2)node[fill=black]{};
\draw (80:3.5)node[nodelabel]{$b_2$};

\draw (22.5:2) -- (202.5:2);
\draw (112.5:2) -- (292.5:2);
\draw (22.5:1) -- (112.5:1);
\draw (67.5:0.9)node[nodelabel]{$\alpha$};

\draw (22.5:1)node{};
\draw (10:1)node[nodelabel]{$a_1$};
\draw (112.5:1)node{};
\draw (125:1)node[nodelabel]{$a_2$};

\draw (22.5:2) -- (67.5:2) -- (112.5:2) -- (157.5:2) -- (202.5:2) -- (247.5:2) -- (292.5:2) -- (337.5:2) -- (22.5:2);

\draw (22.5:2)node[fill=black]{};
\draw (67.5:2)node{};
\draw (112.5:2)node[fill=black]{};
\draw (157.5:2)node{};
\draw (202.5:2)node[fill=black]{};
\draw (247.5:2)node{};
\draw (292.5:2)node[fill=black]{};
\draw (337.5:2)node{};

\draw (270:3.3)node[nodelabel]{$\Gamma_2$};

\end{tikzpicture}
\caption{Cubeplex ($\Gamma_1$) and Twinplex ($\Gamma_2$)}
\label{fig:Cubeplex-and-Twinplex}
\end{figure}

Unlike solid graphs, \NB\ graphs are not a generalization of bipartite graphs;
however, they are very close to being bipartite.
It is for this reason that certain problems are that are open for general nonbipartite graphs
admit easier solutions for the class of \NB\ graphs. For instance, Fischer and Little \cite{fili01}
established the following characterization of \pfaff\ \NB\ graphs in terms
of forbidden {\Smi}s.

\begin{thm}
\label{thm:near-bipartite-pfaffian-forbidden-S-minors}
A \NB\ graph~$G$ is \pfaff\ if and only if (at least) one of the graphs in $\{K_{3,3}, \Gamma_1, \Gamma_2\}$
is an \Smi\ of~$G$.
\end{thm}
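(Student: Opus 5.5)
The statement as printed appears to contain a typo. The intended statement, which is Fischer and Little's, is: a near-bipartite graph $G$ is \emph{\nonpfaff} if and only if at least one graph in $\{K_{3,3},\Gamma_1,\Gamma_2\}$ is an \Smi\ of~$G$. I would prove this corrected form in two directions. The earlier results of the paper give little direct help, because the statement is a genuine \pfaff ian characterization rather than a containment-transfer result. The easy direction is \pfaff ness closed downward, plus three base cases. The hard direction is a structural extension argument.

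\textbf{Easy direction: closure under \Smi s.} First I would check that $K_{3,3}$, $\Gamma_1$ and $\Gamma_2$ are \nonpfaff. For $K_{3,3}$ this is the classical counting argument: sum the clockwise-parity conditions over a suitable family of conformal cycles that covers every edge an even number of times, and obtain an odd total. For $\Gamma_1$ and $\Gamma_2$ I would run the same GF(2) argument with an explicit family of conformal cycles read off from the figures, using the removable doubleton $\{\alpha,\beta\}$ and the symmetries in Proposition~\ref{prp:automorphisms-of-Cubeplex-and-Twinplex} to keep the family small.

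Next I would show that \pfaff ness is inherited by \Smi s.
\begin{itemize}
\item \emph{Removable ears.} Deleting a removable ear keeps the orientation \pfaff, because every conformal cycle of $G-P$ (or $G-P-Q$) is conformal in~$G$.
\item \emph{Contracting a shore of a separating cut $C=\partial(X)$.} Take a \PO\ $D$ of~$G$. After reversing all edges of~$C$ on a suitable side, I would show that $D$ restricted to $G/X$ is \pfaff. A conformal cycle $Q$ of $G/X$ meets $C$ in two edges $e,f$. Because $G/\overline{X}$ is \mc, Lemma~\ref{lem:conformal-cycle-containing-adjacent-edges} gives a conformal cycle through $e,f$ there, and $Q$ extends to a conformal cycle of~$G$.
\item \emph{Independence of the extension.} The subtle point is that the parity of the $\overline{X}$-portion must not depend on the chosen extension. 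I would handle this by comparing two extensions through their symmetric difference, which is a union of conformal cycles of~$G$, and applying a GF(2) parity argument.
\end{itemize}
Then no \pfaff\ graph has any of the three graphs as an \Smi.

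\textbf{Hard direction: sufficiency.} Suppose $G$ is near-bipartite and none of the three graphs is an \Smi.
\begin{enumerate}
\item \emph{Reduction to bricks.} By tight cut reduction and the fact that \pfaff ness is determined by bricks and braces, I would reduce to $G$ a near-bipartite brick, so that $G$ has a removable doubleton $R=\{\alpha,\beta\}$ with $G-R$ bipartite and \mc.
\item \emph{Orienting the bipartite part.} $G-R$ has no $K_{3,3}$ \comi, since such a minor would be an \Smi\ of~$G$. So Little's Theorem (\ref{thm:little}) gives a \PO\ $D_0$ of $G-R$.
\item \emph{Extending to $\alpha$ and $\beta$.} Every conformal cycle of $G$ containing $\alpha$ must also contain~$\beta$. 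Orient $\alpha$ so that one fixed such cycle is oddly oriented; the orientation of $\beta$ is then forced. It remains to check every other conformal cycle through both $\alpha$ and~$\beta$.
\item \emph{Repair or obstruction.} If some such cycle fails, I would choose a minimal bad configuration: two conformal cycles through $\alpha,\beta$ with different parities. I would then modify $D_0$ by reversing edges of a cut of $G-R$ (which preserves \pfaff ness of $G-R$) to try to repair it. When no repair exists, I would show that the obstruction forces, after deleting ears and contracting shores of separating cuts, a structure isomorphic to $\Gamma_1$ or $\Gamma_2$. Here $\Gamma_1$ would arise when $a_1,a_2$ play asymmetric roles, and $\Gamma_2$ when the four ends of $\alpha,\beta$ are symmetric.
\end{enumerate}

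\textbf{Main obstacle.} I expect step 4 to be the hard part. It requires a detailed analysis of how two conformal cycles of $G$ through $\alpha$ and $\beta$ interact in the bipartite graph $G-R$, using the ear structure of $G-R$ relative to $a_1,a_2,b_1,b_2$. My first attempt would be a minimal-counterexample argument: take $G$ with the fewest edges, use ear deletions to force $G-R$ to be nearly the union of the two bad cycles plus a few ears, and then classify the few remaining shapes by hand. For this direction I would ultimately rely on the cited analysis of Fischer and Little~\cite{fili01}.
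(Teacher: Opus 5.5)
You are right that the printed statement has a typo. $K_4$ is a planar (hence \pfaff) \NB\ graph that has none of $K_{3,3},\Gamma_1,\Gamma_2$ as an \Smi. The paper itself applies the theorem in the form ``\pfaff\ if and only if none of the three is an \Smi'' when proving Theorem~\ref{thm:near-bipartite-pfaffian-forbidden-conformal-minors}. The paper gives no proof of this statement; it is quoted from Fischer and Little~\cite{fili01}. So your final fallback is the same as the paper's, but your proposal does not prove the hard direction, and the mechanism you sketch for it cannot work.

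In Step~4, reversing a cut $\partial_{G-R}(Y)$ changes the parity of every cycle $Q$ of $G$ through both $\alpha$ and $\beta$ by $|R\cap\partial_G(Y)|$ modulo~$2$. This holds because $|E(Q)\cap\partial_G(Y)|$ is even and $R\subseteq E(Q)$, so the shift is the same for all such $Q$. It therefore acts like reversing $\alpha$ or like doing nothing, and can never reconcile two conflicting cycles. Moreover, the conformal cycles of the bipartite \mcg\ $G-R$ span its cycle space, since each single ear of an \ed\ lies in a new conformal cycle. Hence any two \PO s of $G-R$ differ by a cut, so $G$ is \pfaff\ if and only if your fixed $D_0$ extends, and the ``repair'' branch is empty. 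The whole content of the hard direction is to show that two conformal cycles through $\alpha,\beta$ with conflicting parities force $K_{3,3}$, $\Gamma_1$ or $\Gamma_2$ as an \Smi. For this you offer only an outline: a minimal counterexample whose shapes are to be ``classified by hand'', plus a guess about when $\Gamma_1$ versus $\Gamma_2$ appears. That is the missing idea. Step~1 also silently uses the nontrivial fact that the brick of a \NB\ graph is again \NB.

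In the easy direction, closure of pfaffianness under contracting a shore of a separating cut is true, but your sketch puts the difficulty in the wrong place. Independence of the extension is immediate. If $P,P'$ both extend the same conformal cycle $Q$ of $G/X$, the two resulting cycles of $G$ are oddly oriented conformal cycles that agree outside $X$, so $P$ and $P'$ have the same parity. Your claim that $P\Delta P'$ is a union of conformal cycles of $G$ is unjustified; also, the extended portion lies in $X$, not $\overline{X}$.

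What actually needs proof is the following. For pairs $e,f\in C$ lying on a common conformal cycle of $G/X$, the parities $\pi(e,f)$ of the $X$-portions must have the form $h(e)+h(f)$ for some $h:C\to\{0,1\}$. Then reversing the edges with $h(e)=1$ works. Reversing \emph{all} of $C$ changes nothing, since every cycle meets $C$ evenly.

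The clean way is via signs of perfect matchings:
\begin{enumerate}
\item Order $V(G)$ with $X$ last, and $V(G/X)$ with $x$ last.
\item For each $e\in C$, fix a perfect matching $N_e$ of $G/\overline{X}$ containing~$e$.
\item Then $\mathrm{sgn}_D(M_1\cup(N_e-e))=\epsilon(e)\,\mathrm{sgn}_{D_1}(M_1)$ for every perfect matching $M_1$ of $G/X$ with $M_1\cap C=\{e\}$. Here $D_1$ is the induced orientation and $\epsilon(e)=\pm1$ depends only on $N_e$.
\item Reversing the edges of $C$ with $\epsilon(e)=-1$ makes all perfect matchings of $G/X$ have the same sign.
\end{enumerate}
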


As one may expect,
the authors also established a characterization of \pfaff\ \NB\ graphs in terms of forbidden {\comi}s.
As an application of our Main Theorem (\ref{thm:conformal-minors-vs-S-minors-cubic-long-version}),
we will deduce this from Theorem~\ref{thm:near-bipartite-pfaffian-forbidden-S-minors} --- just like
we deduced Theorem~\ref{thm:solid-pfaffian-forbidden-conformal-minors} from 
Theorem~\ref{thm:solid-pfaffian-forbidden-S-minors} in the preceding section.

To this end,
we will need a result analogous to Theorem~\ref{thm:conformal-minors-solid} for \NB\ graphs.
Unfortunately, in general, a \NB\ graph may have conformal minors that are neither bipartite nor \NB.
For instance,
the \NB\ graph $G$ shown in Figure~\ref{fig:near-bipartite-conformal-minor-example}a has a conformal
minor $G-e$ (shown in Figure~\ref{fig:near-bipartite-conformal-minor-example}b) that is neither bipartite nor \NB;
note that $G-e$ is not \mbox{$3$-connected}.
We will show that every \mbox{$3$-connected} cubic conformal minor of a \NB\ graph is either bipartite or \NB.

\vfill

\begin{figure}[!htb]
\centering
\begin{tikzpicture}

\draw (3,1.9)node[nodelabel]{$\alpha$};
\draw (0.5,-0.5)node[nodelabel]{$\beta$};

\draw (0,0) to [out=315,in=225] (1,0);
\draw (2.5,1.5) to [out=45,in=135] (3.5,1.5);

\draw (0,0) -- (0,1.5) -- (1,0) -- (1,1.5) -- (0,0);
\draw (2.5,0) -- (2.5,1.5) -- (3.5,0) -- (3.5,1.5) -- (2.5,0);

\draw[ultra thick] (1,0) -- (2.5,1.5);
\draw (1.4,0.1)node[nodelabel]{$e$};

\draw (0,1.5) -- (2.5,0);
\draw (1,1.5) -- (3.5,0);

\draw (0,0)node[fill=black]{};
\draw (1,0)node[fill=black]{};
\draw (2.5,0)node[fill=black]{};
\draw (3.5,0)node[fill=black]{};

\draw (0,1.5)node{};
\draw (1,1.5)node{};
\draw (2.5,1.5)node{};
\draw (3.5,1.5)node{};

\draw (1.75,-1)node[nodelabel]{(a)};
\end{tikzpicture}
\hspace*{1in}
\begin{tikzpicture}
\draw (3,1.9)node[nodelabel]{$\alpha$};
\draw (0.5,-0.5)node[nodelabel]{$\beta$};

\draw (0,0) to [out=315,in=225] (1,0);
\draw (2.5,1.5) to [out=45,in=135] (3.5,1.5);

\draw (0,0) -- (0,1.5) -- (1,0) -- (1,1.5) -- (0,0);
\draw (2.5,0) -- (2.5,1.5) -- (3.5,0) -- (3.5,1.5) -- (2.5,0);

\draw (0,1.5) -- (2.5,0);
\draw (1,1.5) -- (3.5,0);

\draw (0,0)node[fill=black]{};
\draw (1,0)node[fill=black]{};
\draw (2.5,0)node[fill=black]{};
\draw (3.5,0)node[fill=black]{};

\draw (0,1.5)node{};
\draw (1,1.5)node{};
\draw (2.5,1.5)node{};
\draw (3.5,1.5)node{};

\draw (1.75,-1)node[nodelabel]{(b)};
\end{tikzpicture}
\caption{An example of a \NB\ graph and its conformal minor}
\label{fig:near-bipartite-conformal-minor-example}
\end{figure}

The {\em bipartite index} of a graph~$G$, denoted by~${\sf bi}(G)$,
is the minimum number of edges whose deletion results
in a bipartite graph; in particular, ${\sf bi(G)}=0$ if and only if $G$ is bipartite.
It is easy to see that a \NB\ graph has bipartite index two.
However, not every \mcg\ with bipartite index two is \NB;
see Figure~\ref{fig:near-bipartite-conformal-minor-example}b.

The firt part of the following is immediate. The second part follows from the observation
that bi-subdivisions, unlike general subdivisions, preserve the parities of cycles.

\begin{lem}
\label{lem:bipartite-index-properties}
Let $G$ denote any graph. For each subgraph~$H$ of $G$: ${\sf bi}(H) \leq {\sf bi}(G)$.
If $J$ is a bi-subdivision of $G$, then ${\sf bi}(G) = {\sf bi}(J)$. \qed
\end{lem}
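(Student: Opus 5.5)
The first part is essentially the observation that bipartiteness is inherited by subgraphs. I would argue as follows. Let $F \subseteq E(G)$ be a set of ${\sf bi}(G)$ edges such that $G-F$ is bipartite. Then $H-(F\cap E(H))$ is a subgraph of $G-F$, and hence it is bipartite as well. Consequently ${\sf bi}(H) \le |F\cap E(H)| \le |F| = {\sf bi}(G)$.

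For the second part, I would first recall that a graph is bipartite if and only if it has no odd cycle. Next I would record the correspondence between $G$ and its bi-subdivision $J$. Each edge $e$ of $G$ corresponds to an odd path $P_e$ of $J$, whose internal vertices have degree two in $J$. Cycles of $G$ correspond bijectively to cycles of $J$, the cycle of $G$ with edge set $D$ corresponding to the cycle of $J$ with edge set $\bigcup_{e\in D}E(P_e)$. Since each $P_e$ has odd length, the length of the image cycle is congruent modulo two to $|D|$, so parity of cycles is preserved.

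Next I would show ${\sf bi}(J)\le{\sf bi}(G)$. Take an optimal set $F$ for $G$, and for each $e\in F$ choose one edge of $P_e$; call the resulting set $F'$. Then $J-F'$ consists of a bi-subdivision of $G-F$ together with some pendant paths attached to it. Every cycle of $J-F'$ avoids the pendant paths, so it is the image of a cycle of $G-F$, and therefore it is even. Hence $J-F'$ is bipartite and ${\sf bi}(J)\le |F'|={\sf bi}(G)$.

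Conversely, I would show ${\sf bi}(G)\le{\sf bi}(J)$. Let $F'$ be an optimal set for $J$, and set $F:=\{e\in E(G): E(P_e)\cap F'\neq\emptyset\}$, so that $|F|\le|F'|$. Any cycle of $G-F$ uses only edges $e$ whose paths $P_e$ are entirely present in $J-F'$. Its image is therefore a cycle of $J-F'$, which is even, and so the original cycle is even too. Thus $G-F$ is bipartite and ${\sf bi}(G)\le{\sf bi}(J)$.

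The only point that needs care is the cycle correspondence under bi-subdivision together with the parity count; everything else is routine.
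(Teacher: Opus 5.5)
Your proof is correct and follows the route the paper indicates. The first part restricts an optimal edge set $F$ to $E(H)$. The second rests on the fact that bi-subdivisions preserve cycle parity, which you make rigorous through the cycle correspondence and the two inequalities ${\sf bi}(J)\le{\sf bi}(G)$ and ${\sf bi}(G)\le{\sf bi}(J)$; the paper treats both parts as immediate, and your write-up supplies exactly the details it omits.
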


We will find the following lemma useful; it is easily proved using the well-known Hall's Theorem.

\begin{lem}
\label{lem:bipartite-matchable-not-mc}
Let $H[A,B]$ denote a bipartite matchable graph.
If $H$ is not \mc\ then there exist partitions $(A_1,A_2)$ of~$A$ and $(B_1,B_2)$ of~$B$
such that $|A_i| = |B_i|$ for $i \in \{1,2\}$, and there are no edges with one end in~$B_1$
and the other end in~$A_2$. \qed
\end{lem}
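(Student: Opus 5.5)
The plan is to invoke the Frobenius--K\H{o}nig form of Hall's Theorem and then read off the partitions from a suitable tight set. Since $H[A,B]$ is matchable, $|A|=|B|$, and some edge $e=ab$ (with $a\in A$, $b\in B$) lies in no perfect matching of~$H$. If $H$ is disconnected, we may take $A_1\cup B_1$ to be the vertex set of one component; this works because each component of a matchable bipartite graph is balanced. So the real content is the case when $H$ is connected but some edge is not admissible.

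In that case, $H-a-b$ has no perfect matching, while $|A-a|=|B-b|$. By Hall's Theorem applied to the graph $H-a-b$, there is a set $S\subseteq B-b$ with $|N_{H-a-b}(S)|<|S|$. Hence $N_H(S)\subseteq N_{H-a-b}(S)\cup\{a\}$, which gives $|N_H(S)|\le|S|$. On the other hand, since $H$ is matchable, Hall's condition in $H$ gives $|N_H(S)|\ge|S|$. Therefore $|N_H(S)|=|S|$ and $a\in N_H(S)$. Now set $B_1:=S$, $A_1:=N_H(S)$, $A_2:=A-A_1$ and $B_2:=B-B_1$. Then $|A_1|=|B_1|$, and since $|A|=|B|$ we also get $|A_2|=|B_2|$. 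By construction, no edge joins $B_1$ to $A_2$, because every neighbour of $B_1$ lies in $A_1$.

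The one subtlety is that the partitions should be proper, so that the statement is not vacuous. We have $a\in A_1$ and $b\in B_2$, so both $A_1$ and $B_2$ are nonempty. For a tidier statement, one might prefer that $S$ be nonempty. This follows because Hall's deficiency occurs only for nonempty $S$: the empty set has $|N(\emptyset)|=0=|\emptyset|$ and so can never be deficient. Consequently $B_1\ne\emptyset$ and $B_2\ne\emptyset$, and the edge $ab$ goes from $A_1$ to $B_2$. This edge is exactly the one that fails to be in any perfect matching, which is the structural content the lemma presumably feeds into later.

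I do not expect a real obstacle; the only step needing care is the bookkeeping of removing $a$ and $b$ before applying Hall. The disconnected case, if one wants to avoid a separate argument, can alternatively be absorbed by noting that a connected matchable bipartite graph with every edge admissible is \mc, so failure of \mc ness means either disconnection or an inadmissible edge.
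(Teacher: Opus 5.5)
Your proof is correct and is the Hall's Theorem argument the paper has in mind (the paper omits its proof): a Hall-deficient set $S\subseteq B-b$ in $H-a-b$, for an edge $ab$ lying in no perfect matching, satisfies $|N_H(S)|=|S|$ in $H$ and gives the partitions, with $a\in A_1$ and $b\in B_2$ ensuring all four parts are nonempty. The only blemish is that your opening sentence asserts such an edge exists before you split off the disconnected case, where every edge may lie in some perfect matching; your separate component argument for that case closes this.
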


We now use the above and simple counting arguments to prove the following.

\begin{prp}
\label{prp:3conn-cubic-mcg-bip-index-at-most-two}
Let $J$ be a $3$-connected cubic graph such that \mbox{${\sf bi}(J) \leq 2$}.
The precisely one of the following holds:
\begin{enumerate}[(i)]
\item either ${\sf bi}(J) =0$ (that is, $J$ is bipartite), or
\item otherwise ${\sf bi}(J)=2$; furthermore, if $D$ is any pair of edges such that $J-D$ is bipartite
then $D$ is a removable doubleton of~$J$; in particular, $J$ is \NB.
\end{enumerate} 
\end{prp}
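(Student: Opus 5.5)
The plan is to argue that ${\sf bi}(J) \neq 1$ by a parity count, and then to show that for any pair $D$ with $J-D$ bipartite, the graph $J-D$ is matching covered, using Lemma~\ref{lem:bipartite-matchable-not-mc} and $3$-connectivity.

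\emph{Step 1: ${\sf bi}(J)\neq 1$.} Suppose $e$ is a single edge with $J-e$ bipartite, say $J-e=H[A,B]$. Then $e$ has both ends in one class, say $A$. Counting edges at each class of $J-e$ gives $3|A|-2 = 3|B|$. This is impossible modulo $3$. So ${\sf bi}(J)\in\{0,2\}$, and the two alternatives are clearly exclusive.

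\emph{Step 2: the shape of $D$.} Now let ${\sf bi}(J)=2$ and let $D=\{\alpha,\beta\}$ with $J-D=H[A,B]$ bipartite. Neither edge of $D$ can join $A$ to $B$, since otherwise $J-(D-\{\text{that edge}\})$ would still be bipartite and ${\sf bi}(J)\le 1$. If both edges lay inside $A$, counting would give $3|A|-4=3|B|$, again impossible mod $3$. So $\alpha$ has both ends in $A$ and $\beta$ has both ends in $B$, and counting now gives $3|A|-2=3|B|-2$, hence $|A|=|B|$.

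\emph{Step 3: $J-D$ is matchable.} This is the step I expect to be the main obstacle. I plan to use Hall's theorem. For $S\subseteq A$, the edges of $H$ leaving $S$ number $3|S|-2\,e(\alpha\subseteq S)$, and they all land in $N_H(S)$. On the other side, $N_H(S)$ receives at most $3|N_H(S)|-2\,e(\beta\subseteq N_H(S))$ edges. A deficiency $|N_H(S)|\le |S|-1$ would force $\alpha\subseteq S$ and the counts to be nearly tight. I would then examine the cut $\partial_J(S\cup N_H(S))$. Tightness means few edges of $H$ leave $N_H(S)$ outside $S$, so together with $\alpha,\beta$ this cut has at most two edges. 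That contradicts $3$-connectivity, since a cubic $3$-connected graph has no nontrivial cut with fewer than $3$ edges. The degenerate case where the complement of $S\cup N_H(S)$ is empty also needs checking, but there the count $|A|=|B|$ gives the contradiction directly.

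\emph{Step 4: $J-D$ is matching covered, so $D$ is a removable doubleton.} If $H$ is matchable but not matching covered, Lemma~\ref{lem:bipartite-matchable-not-mc} gives partitions $(A_1,A_2)$ and $(B_1,B_2)$ with $|A_i|=|B_i|$ and no $B_1A_2$ edges in $H$. Counting at $A_1\cup B_1$: the edges of $J$ leaving $A_1$ total $3|A_1|$ minus twice the number of ends of $\alpha$ inside $A_1$ (each end of $\alpha$ consumes one degree). The edges entering $B_1$ are counted the same way with $\beta$. Hence the number of $A_1B_2$ edges of $H$ equals (the ends of $\beta$ in $B_1$) minus (the ends of $\alpha$ in $A_1$), adjusted by the edges of $\alpha,\beta$ inside each part. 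In every case at most $2$ edges of $H$ cross between $A_1\cup B_1$ and $A_2\cup B_2$. Adding any of $\alpha,\beta$ that split the parts, the cut $\partial_J(A_1\cup B_1)$ has at most $2$ edges, or the cut count is forced $\equiv 0 \pmod 2$ and small. Because this cut is nontrivial, that contradicts $3$-connectivity; the cases where a part is a single vertex are ruled out by degree three. So $H$ is bipartite and matching covered, $D$ is a removable doubleton, and since $J$ has minimum degree $3$, $J$ is \NB.

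The main risk is the bookkeeping in Steps 3 and 4: showing that each deficiency produces a cut of size at most $2$ in $J$. I expect it to go through, because each end of $\alpha$ or $\beta$ shifts the counts by only one.
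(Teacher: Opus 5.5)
Your proposal is correct. Its core, Step~4, is the paper's argument: apply Lemma~\ref{lem:bipartite-matchable-not-mc} to $H=J-D$, then count edges on $\partial_J(A_1\cup B_1)$ to contradict $3$-edge-connectivity. To make the bookkeeping there exact, use the following. Every $H$-edge at $B_1$ goes to $A_1$, and $|A_1|=|B_1|$. Hence the number of $H$-edges between $A_1$ and $B_2$ is exactly $|V(\beta)\cap B_1|-|V(\alpha)\cap A_1|$, where $V(\cdot)$ denotes the ends of an edge, with no further adjustment. Adding whichever of $\alpha,\beta$ has exactly one end in $A_1\cup B_1$ gives $|\partial_J(A_1\cup B_1)|\le 2$ in every case.

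You differ from the paper in Steps~2--3. You get $|A|=|B|$ directly from degree sums, and matchability of $J-D$ from Hall's theorem. The paper instead uses that $J$ is matching covered. It compares perfect matchings $M_\alpha$ and $M_\beta$ to show that some perfect matching contains both $\alpha$ and $\beta$. This gives $|A|=|B|$ and that every perfect matching contains both or neither of them. It then takes a perfect matching of $J$ containing an edge adjacent to $\alpha$, which therefore avoids all of $D$. Your route is more self-contained, since it never appeals to $2$-connected cubic graphs being matching covered. The paper's route is shorter once that fact is assumed.

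Your Step~3 needs no cut argument, and the cut argument you sketch there is not justified as written. First, the number of $H$-edges at $S$ is $3|S|-|V(\alpha)\cap S|$; your formula misses the case where exactly one end of $\alpha$ lies in $S$. With the correct count, a deficient $S$ with $|N_H(S)|\le |S|-1$ gives
\[
3|S|-|V(\alpha)\cap S| \;\le\; 3|N_H(S)|-|V(\beta)\cap N_H(S)| \;\le\; 3|S|-3 .
\]
This forces $|V(\alpha)\cap S|\ge 3$, which is absurd since $\alpha$ has only two ends. So Hall's condition holds outright, and $3$-connectivity is needed only in Step~4.
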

\begin{proof}
Let $J$ denote a $2$-connected cubic graph.
If $J$ has bipartite index zero, then $J$ is bipartite, and we are done.
Now suppose that $J$ has bipartite index one or two, and let $D$ denote an edge-set of minimum cardinality
such that $J-D$ is bipartite.
Observe that, by choice of $D$, the graph $J-D$ is connected. We let $A$~and~$B$ denote the color classes of
$J-D$. By choice of~$D$, each edge in~$D$ either has both ends in~$A$, or otherwise in~$B$.

First suppose that one of $A$~and~$B$, say~$A$, is a stable set in~$J$; consequently,
each edge in~$D$ has both ends in~$B$. Since $J$ is cubic, $|\partial_J(A)| = 3|A|$.
A simple counting argument shows that $|\partial_J(B)| = 3|B| - 2|D|$;
which is either $3|B|-2$ or $3|B|-4$.
However, $|\partial_J(A)| = |\partial_J(B)|$;
this leads us to an arithmetic contradiction.

Thus, neither $A$ nor $B$ is a stable set in~$J$. In particular, $J$ has bipartite index two, and $|D|=2$.
We let $D:=\{\alpha,\beta\}$, and adjust notation so that $\alpha$ has both ends in~$A$; whence $\beta$
has both ends in~$B$.
Let us first consider the case in which $J$ has no \pema\ containing both $\alpha$~and~$\beta$, and let
$M_\alpha$ denote a \pema\ containing $\alpha$; since $\beta \notin M_\alpha$, we infer that $|A| = |B|+2$.
Using an analogous argument, $|B| = |A|+2$. Once again, an arithmetic contradiction.
Ergo, $J$ has a \pema\ containing both $\alpha$ and $\beta$; this implies that $|A| = |B|$; furthermore,
for each \pema~$M$ of~$J$, either $D \subseteq M$, or $D \cap M = \emptyset$. Observe
that $J-D$ is matchable; to see this, consider any \pema\ containing an edge incident with~$\alpha$. It remains
to argue that $J-D$ is \mc.

Suppose not. By Lemma~\ref{lem:bipartite-matchable-not-mc},
there exist partitions $(A_1, A_2)$ of~$A$ and $(B_1,B_2)$ of~$B$ such that
$|A_i| = |B_i|$ for $i \in \{1,2\}$, and there are no edges with one end in $B_1$ and the other end in $A_2$.
We let $X:=A_1 \cup B_1$ and $C:=\partial(X)$. Since $J$ is cubic, $|\partial_J(X)|$ has the same partiy
as~$|X|$, and since $J$ is $3$-connected, $|\partial_J(X)| \geq 4$. We thus infer that
there are at least two edges in $J$ that have one end in $A_1$ and the other end in $B_2$.
Since $J$ is cubic, we use a simple counting argument to infer the following:
(i) $\beta$ has both ends in~$B_1$, and likewise, $\alpha$ has both ends in~$A_2$, and (ii) $|\partial_J(X)| = 2$.
Ergo, we have a contradiction.

In summary, if $J$ has bipartite index one or two, then $J$ is in fact near-bipartite (and
has bipartite index two). This completes the proof of Proposition~\ref{prp:3conn-cubic-mcg-bip-index-at-most-two}.
\end{proof}

This leads us to the desired result applicable
to \NB\ graphs (that is similar to Theorem~\ref{thm:conformal-minors-solid} which applied to solid graphs).

\begin{cor}
\label{cor:3-connected-cubic-conformal-minors-near-bipartite}
Every $3$-connected cubic conformal minor of a \NB\ graph is either bipartite or \NB.
\end{cor}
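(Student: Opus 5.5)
Let $G$ be a \NB\ graph and let $J$ be a $3$-connected cubic \comi\ of~$G$. The plan is to bound ${\sf bi}(J)$ by two and then invoke Proposition~\ref{prp:3conn-cubic-mcg-bip-index-at-most-two}, which converts such a bound directly into the desired dichotomy for $3$-connected cubic graphs.

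First, I will establish ${\sf bi}(G)\le 2$. By definition, $G$ has a removable double ear $(P,Q)$ such that $G-P-Q$ is bipartite. I will pick one edge $\alpha\in E(P)$ and one edge $\beta\in E(Q)$. Then $G-\alpha-\beta$ consists of the bipartite graph $G-P-Q$ together with pendant paths hanging off it, namely the pieces of $P-\alpha$ and $Q-\alpha-\beta$ attached at the ends of the ears. Attaching trees to a bipartite graph keeps it bipartite, so ${\sf bi}(G)\le 2$.

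Next, I will use the definition of \comi. Since $J$ is a \comi\ of~$G$, there is a conformal subgraph~$H$ of~$G$ that is a \bisub\ of~$J$. By Lemma~\ref{lem:bipartite-index-properties}, ${\sf bi}(H)\le{\sf bi}(G)\le 2$, because $H$ is a subgraph of $G$. The same lemma gives ${\sf bi}(J)={\sf bi}(H)$, because $H$ is a \bisub\ of $J$ and bi-subdivision preserves the parities of cycles. Combining these, ${\sf bi}(J)\le 2$.

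Finally, I will apply Proposition~\ref{prp:3conn-cubic-mcg-bip-index-at-most-two} to the $3$-connected cubic graph~$J$. It yields that either ${\sf bi}(J)=0$, in which case $J$ is bipartite, or ${\sf bi}(J)=2$ and $J$ has a removable doubleton, in which case $J$ is \NB. This is exactly the statement of the corollary.

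I do not expect a serious obstacle here, since all the real work is in the proposition. The only point needing care is the first step: the bound ${\sf bi}(G)\le 2$ for an arbitrary \NB\ graph (possibly with degree-two vertices) must come from the ear structure as above, not from the removable-doubleton formulation. That formulation was stated only for graphs of minimum degree three or more, so it cannot be used directly for a general~$G$.
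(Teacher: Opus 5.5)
Your proof is correct and follows the paper's argument: you bound ${\sf bi}(J)={\sf bi}(H)\le{\sf bi}(G)\le 2$ via Lemma~\ref{lem:bipartite-index-properties} and then apply Proposition~\ref{prp:3conn-cubic-mcg-bip-index-at-most-two}. The only difference is that you justify ${\sf bi}(G)\le 2$ from the double-ear definition, by deleting one edge from each ear to leave a bipartite graph with pendant paths attached (you wrote $Q-\alpha-\beta$ where $Q-\beta$ was meant), whereas the paper simply asserts that a near-bipartite graph has bipartite index two.
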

\begin{proof}
Let $G$ denote a \NB\ graph that has a conformal minor~$J$ that is $3$-connected and cubic.
In particular, $G$ has a subgraph~$H$ that is a bi-subdivision of~$J$.
Since $G$ is \NB, ${\sf bi}(G)=2$.
By Lemma~\ref{lem:bipartite-index-properties}, ${\sf bi}(J) = {\sf bi}(H) \leq 2$.
By Proposition~\ref{prp:3conn-cubic-mcg-bip-index-at-most-two}, we conclude
that $J$ is either bipartite or \NB.
\end{proof}

In order to deduce a characterization of \pfaff\ \NB\ graphs in terms of forbidden conformal minors
(from Theorem~\ref{thm:near-bipartite-pfaffian-forbidden-S-minors}), it remains to figure out which
{\KFdec}s of $\{ K_{3,3}, \Gamma_1, \Gamma_2 \}$ are \NB;
those (along with $K_{3,3}$ itself) will be the forbidden {\comi}s; the rest may be omitted.
(Furthermore,
if any such graph contains a smaller such graph as a \comi, we may omit the former.)
To this end, we will find the following lemma useful.

\begin{lem}
\label{lem:near-bipartite-K4-decoration}
Let $R:=\{\alpha,\beta\}$ denote a removable doubleton of a cubic \NB\ graph~$G$,
let $X \subset V(G)$ such that $G[X]$ is isomorphic to~$C_3$, and let $C:=\partial(X)$.
Then $C$ is a separating $3$-cut of~$G$, and
precisely one of $\alpha$ and $\beta$, say~$\alpha$, is an edge of~$G[X]$.
Furthermore, let~$\alpha'$ denote the unique edge in $\partial(X)$ that is not adjacent with~$\alpha$,
and let $J$ denote the \mbox{$C$-contraction} \mbox{$G/X \rightarrow x$}.
Then $J$ is cubic, and precisely one of the following holds:
\begin{enumerate}[(i)]
\item either $\alpha' = \beta$ and $J$ is bipartite, or
\item otherwise $J$ is nonbipartite and $J-\alpha'-\beta$ is a bipartite matchable graph;
furthermore, if $J$ is $3$-connected then $J$ is \NB\ with $R':=\{\alpha',\beta\}$ as one of its removable doubletons.
\end{enumerate}
\end{lem}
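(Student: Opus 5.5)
The plan is to work throughout with the bipartition $(A,B)$ of the underlying bipartite graph $G-R$, where $\alpha:=a_1a_2$ has both ends in~$A$ and $\beta:=b_1b_2$ has both ends in~$B$, and to track how this bipartition passes to the $C$-contraction $J$.

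First I will dispose of the claims about $C$. Since $G[X]\cong C_3$ is simple and $G$ is cubic, each vertex of $X$ sends exactly one edge out of $X$, so $|C|=3$. Being a cubic \mcg, $G$ is $2$-connected, so Proposition~\ref{prp:cubic-mcg-3cuts-are-separating-cuts} shows that $C$ is separating. The triangle is an odd cycle while $G-R$ is bipartite, so the triangle contains an edge of~$R$. It cannot contain both $\alpha$ and $\beta$: two edges of a triangle share a vertex, but the ends of $\alpha$ lie in $A$ and those of $\beta$ lie in $B$. Hence exactly one of them, say $\alpha=a_1a_2$, lies in $G[X]$. Let $w$ be the third vertex of the triangle. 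Then $a_1wa_2$ is a path in $G-R$, so $w\in B$, and $\alpha'$ is the edge of $C$ at~$w$. The $C$-contraction $J$ is obviously cubic.

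Next I will $2$-colour $J-\alpha'-\beta$ by placing $x$ in the colour class $A$. The edges of $C$ at $a_1$ and $a_2$ are not in $R$ (the only edge of $R$ at $a_i$ is $\alpha$), so their other ends lie in $B$. Consequently $(A-\{a_1,a_2\})\cup\{x\}$ and $B-\{w\}$ form a bipartition of $J-\alpha'-\beta$.

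Now I split according to $\alpha'$. If $\alpha'=\beta$, then $\beta$ joins $w$ to a vertex $b'\in B$, and in $J$ it becomes the edge $xb'$, which respects the bipartition; hence $J$ itself is bipartite, giving~(i). If $\alpha'\neq\beta$, then $\alpha'\notin R$, so its other end lies in $A$ and $\alpha'$ joins two vertices of the $A$-side of~$J$. Similarly $\beta$ still joins two vertices of~$B-\{w\}$.

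For matchability of $J-\alpha'-\beta$, I use that $G-R$ is \mc. I pick a \pema\ $M$ of $G-R$ containing $wa_1$. Then $a_2$ is matched by its cut edge, and $M$ meets $C$ only in that edge, so $M$ induces a \pema\ of $J$ avoiding $\alpha'$ and~$\beta$.

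For nonbipartiteness of $J$, I will show that $J-\alpha'-\beta$ is connected, so that its bipartition is unique; then $\alpha'$ and $\beta$ both lie inside colour classes, and $J$ is not bipartite. Connectivity holds because $G-R$ is a bipartite \mcg\ other than $K_2$ and hence $2$-connected. Thus $G-R-\alpha'$ is connected, and $J-\alpha'-\beta$ is obtained from it by shrinking~$X$.

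Finally, suppose $J$ is $3$-connected. Then ${\sf bi}(J)\le 2$ with $J-\{\alpha',\beta\}$ bipartite and $J$ nonbipartite, so Proposition~\ref{prp:3conn-cubic-mcg-bip-index-at-most-two}(ii) gives that $\{\alpha',\beta\}$ is a removable doubleton of~$J$ and that $J$ is \NB. Exclusivity of (i) and (ii) is immediate, since (ii) asserts that $J$ is nonbipartite.

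The step needing most care is the bookkeeping in the case $\alpha'\neq\beta$, especially verifying that the end of $\alpha'$ outside $X$ lies in $A$ and that connectivity survives the deletion of~$\alpha'$.
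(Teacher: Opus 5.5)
Your proof is correct and follows essentially the same route as the paper: the same bipartition $(A\cap\overline{X})\cup\{x\}$, $B\cap\overline{X}$ of $J-\alpha'-\beta$, the same case split on whether $\alpha'=\beta$, and the same final appeal to Proposition~\ref{prp:3conn-cubic-mcg-bip-index-at-most-two}. The only differences are in details that the paper either leaves implicit or argues differently: you get matchability from a perfect matching of $G-R$ containing $wa_1$, whereas the paper uses that $J$ is matching covered; and you explicitly prove that $J-\alpha'-\beta$ is connected (via the $2$-edge-connectivity of $G-R$), which is exactly what is needed to conclude that $J$ is nonbipartite.
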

\begin{proof}
Since $G$ is cubic, and $G[X]$ is isomorphic to $C_3$, it follows that $C$ is a $3$-cut.
By Proposition~\ref{prp:cubic-mcg-3cuts-are-separating-cuts}, $C$ is a separating cut of~$G$.

\begin{figure}[!htb]
\centering
\begin{tikzpicture}
\draw (-1,0)node[nodelabel]{$B \cap X$};
\draw (-1,1.5)node[nodelabel]{$A \cap X$};
\draw (7,0)node[nodelabel]{$B \cap \overline{X}$};
\draw (7,1.5)node[nodelabel]{$A \cap \overline{X}$};

\draw (0,1.5) -- (2.5,0.5);
\draw (1,1.5) -- (3.5,0.5);
\draw (0,0) -- (0,1.5);
\draw (0,0) -- (1,1.5);
\draw (0,1.5) to [out=45,in=135] (1,1.5);
\draw (0.5,1.5)node[nodelabel]{$\alpha$};

\draw (0,0) to [out=315,in=225] (2,0);
\draw (1,-0.7)node[nodelabel]{$\alpha' = \beta$};

\draw (-0.3,-0.3) -- (0.3,-0.3) -- (0.3,0.3) -- (-0.3,0.3) -- (-0.3,-0.3);
\draw (1.7,-0.3) -- (6.3,-0.3) -- (6.3,0.3) -- (1.7,0.3) -- (1.7,-0.3);
\draw (-0.3,1.2) -- (1.3,1.2) -- (1.3,1.8) -- (-0.3,1.8) -- (-0.3,1.2);
\draw (2.7,1.2) -- (6.3,1.2) -- (6.3,1.8) -- (2.7,1.8) -- (2.7,1.2);

\draw (0,0)node{};
\draw (2,0)node{};
\draw (3,0)node{};
\draw (4,0)node{};
\draw (5,0)node{};
\draw (6,0)node{};

\draw (0,1.5)node{};
\draw (1,1.5)node{};
\draw (3,1.5)node{};
\draw (4,1.5)node{};
\draw (5,1.5)node{};
\draw (6,1.5)node{};

\draw (3,-1.4)node[nodelabel]{$G$};

\end{tikzpicture}
\hspace*{0.5in}
\begin{tikzpicture}

\draw (2,1.5) -- (2.5,0.5);
\draw (2,1.5) -- (3.5,0.5);

\draw (2,1.5) -- (2,0);

\draw (1.7,-0.3) -- (6.3,-0.3) -- (6.3,0.3) -- (1.7,0.3) -- (1.7,-0.3);
\draw (2.7,1.2) -- (6.3,1.2) -- (6.3,1.8) -- (2.7,1.8) -- (2.7,1.2);

\draw (2,0)node{};
\draw (3,0)node{};
\draw (4,0)node{};
\draw (5,0)node{};
\draw (6,0)node{};

\draw (3,1.5)node{};
\draw (4,1.5)node{};
\draw (5,1.5)node{};
\draw (6,1.5)node{};

\draw (2,1.5)node{};
\draw (1.7,1.5)node[nodelabel]{$x$};

\draw (4,-1.4)node[nodelabel]{$J$};

\end{tikzpicture}
\vspace*{-0.2in}
\caption{Illustration for situation (i) in Lemma~\ref{lem:near-bipartite-K4-decoration}}
\label{fig:near-bipartite-K4-decoration-of-bipartite-graph}
\end{figure}

Let $A$ and $B$ denote the color classes of the bipartite graph~$G-R$, and adjust notation so that $\alpha$
has both ends in~$A$; consequently, $\beta$ has both ends in $B$. Since $G[X]$ is isomorphic to~$C_3$,
it follows immediately that precisely one of $\alpha$ and $\beta$ is an edge of~$G[X]$. Adjust notation so that
$\alpha$ is an edge of~$G[X]$. Two of the edges of~$C$ are adjacent with $\alpha$;
let $\alpha'$ denote the third edge that is not adjacent with~$\alpha$.
The reader may easily verify that if $\alpha' = \beta$ then $J:=G/X$ is bipartite;
see Figure~\ref{fig:near-bipartite-K4-decoration-of-bipartite-graph}.

\begin{figure}[!htb]
\centering
\begin{tikzpicture}

\draw (-1,0)node[nodelabel]{$B \cap X$};
\draw (-1,1.5)node[nodelabel]{$A \cap X$};
\draw (7,0)node[nodelabel]{$B \cap \overline{X}$};
\draw (7,1.5)node[nodelabel]{$A \cap \overline{X}$};

\draw (2,0) to [out=315,in=225] (3,0);
\draw (2.5,0)node[nodelabel]{$\beta$};
\draw (0,0) -- (3,1.5);
\draw (1.2,0.4)node[nodelabel]{$\alpha'$};

\draw (0,1.5) -- (2.5,0.5);
\draw (1,1.5) -- (3.5,0.5);
\draw (0,0) -- (0,1.5);
\draw (0,0) -- (1,1.5);
\draw (0,1.5) to [out=45,in=135] (1,1.5);
\draw (0.5,1.5)node[nodelabel]{$\alpha$};

\draw (-0.3,-0.3) -- (0.3,-0.3) -- (0.3,0.3) -- (-0.3,0.3) -- (-0.3,-0.3);
\draw (1.7,-0.3) -- (6.3,-0.3) -- (6.3,0.3) -- (1.7,0.3) -- (1.7,-0.3);
\draw (-0.3,1.2) -- (1.3,1.2) -- (1.3,1.8) -- (-0.3,1.8) -- (-0.3,1.2);
\draw (2.7,1.2) -- (6.3,1.2) -- (6.3,1.8) -- (2.7,1.8) -- (2.7,1.2);

\draw (0,0)node{};
\draw (2,0)node{};
\draw (3,0)node{};
\draw (4,0)node{};
\draw (5,0)node{};
\draw (6,0)node{};

\draw (0,1.5)node{};
\draw (1,1.5)node{};
\draw (3,1.5)node{};
\draw (4,1.5)node{};
\draw (5,1.5)node{};
\draw (6,1.5)node{};

\draw (3,-1)node[nodelabel]{$G$};

\end{tikzpicture}
\hspace*{0.5in}
\begin{tikzpicture}

\draw (2,0) to [out=315,in=225] (3,0);
\draw (2.5,0)node[nodelabel]{$\beta$};
\draw (2,1.5) to [out=45,in=135] (3,1.5);
\draw (2.5,1.9)node[nodelabel]{$\alpha'$};

\draw (2,1.5) -- (2.5,0.5);
\draw (2,1.5) -- (3.5,0.5);

\draw (1.7,-0.3) -- (6.3,-0.3) -- (6.3,0.3) -- (1.7,0.3) -- (1.7,-0.3);
\draw (2.7,1.2) -- (6.3,1.2) -- (6.3,1.8) -- (2.7,1.8) -- (2.7,1.2);

\draw (2,0)node{};
\draw (3,0)node{};
\draw (4,0)node{};
\draw (5,0)node{};
\draw (6,0)node{};

\draw (3,1.5)node{};
\draw (4,1.5)node{};
\draw (5,1.5)node{};
\draw (6,1.5)node{};

\draw (2,1.5)node{};
\draw (1.7,1.5)node[nodelabel]{$x$};

\draw (4,-1)node[nodelabel]{$J$};

\end{tikzpicture}
\vspace*{-0.2in}
\caption{Illustration for situation (ii) in Lemma~\ref{lem:near-bipartite-K4-decoration}}
\label{fig:near-bipartite-K4-decoration-of-near-bipartite-graph}
\end{figure}

Now suppose that $\alpha' \neq \beta$;
see Figure~\ref{fig:near-bipartite-K4-decoration-of-near-bipartite-graph}.
Observe that $J - \alpha' - \beta$ is bipartite with color classes $A':=(A \cap \overline{X}) \cup \{x\}$
and $B':=B \cap \overline{X}$, and that $\alpha'$ has both ends in $A'$ whereas $\beta$ has both ends in~$B'$.
Since $J$ is \mc, it follows immediately that $J-\alpha'-\beta$ is matchable.
Finally, if $J$ is $3$-connected, we invoke
Proposition~\ref{prp:3conn-cubic-mcg-bip-index-at-most-two} to conclude
that $J$ is \NB\ with $R':=\{\alpha',\beta\}$ as one of its removable doubletons.
\end{proof}

\begin{figure}[!htb]
\centering
\begin{tikzpicture}[scale=1.2]

\draw (150:0.4) to [out=135,in=45] (180:1.75);
\draw (30:0.4) to [out=45,in=135] (0:1.75);
\draw (270:0.4) to [out=225,in=135] (270:1.75);

\draw (30:0.4) -- (150:0.4) -- (270:0.4) -- (30:0.4);

\draw (90:0.35)node[nodelabel]{$\alpha$};

\draw (30:0.4)node{};
\draw (150:0.4)node{};
\draw (270:0.4)node[fill=black]{};

\draw (90:1) -- (180:1) -- (270:1) -- (0:1) -- (90:1);
\draw (90:2.5) -- (180:2.5) -- (270:2.5) -- (0:2.5) -- (90:2.5);

\draw (90:1) -- (90:2.5);
\draw (83:1.7)node[nodelabel]{$\beta$};
\draw (180:1) -- (180:1.75) -- (180:2.5);
\draw (270:1) -- (270:1.75) -- (270:2.5);
\draw (0:1) -- (0:1.75) -- (0:2.5);

\draw (90:1)node[fill=black]{};
\draw (100:1.2)node[nodelabel]{$b_2$};
\draw (180:1)node{};
\draw (270:1)node[fill=black]{};
\draw (0:1)node{};

\draw (180:1.75)node[fill=black]{};
\draw (270:1.75)node{};
\draw (0:1.75)node[fill=black]{};

\draw (90:2.5)node[fill=black]{};
\draw (90:2.8)node[nodelabel]{$b_1$};
\draw (180:2.5)node{};
\draw (270:2.5)node[fill=black]{};
\draw (0:2.5)node{};

\draw (270:3.3)node[nodelabel]{$\Gamma_1^{a_1} \odot K_4$};

\end{tikzpicture}
\hspace*{0.5in}
\begin{tikzpicture}[scale=1.2]

\draw (270:1.4) -- (260:1.75) -- (270:2.1);

\draw (0:0) to [out=135,in=45] (180:1.75);
\draw (0:0) to [out=45, in=135] (0:1.75);
\draw (0:0) to [out=225,in=135] (260:1.75);

\draw[dashed] (180:1) -- (270:1);
\draw (90:1) -- (180:1);
\draw (270:1) -- (0:1) -- (90:1);
\draw [dashed] (180:2.5) -- (270:2.5);
\draw (90:2.5) -- (180:2.5);
\draw (270:2.5) -- (0:2.5) -- (90:2.5);

\draw[dashed] (90:1) -- (90:2.5);
\draw (83:1.7)node[nodelabel]{$\beta$};
\draw (180:1) -- (180:1.75) -- (180:2.5);
\draw (270:1) -- (270:1.4);
\draw (270:2.1) -- (270:2.5);
\draw[dashed] (270:1.4) -- (270:2.1);
\draw (0:1) -- (0:1.75) -- (0:2.5);

\draw (0:0)node{};
\draw (330:0.3)node[nodelabel]{$a_1$};

\draw (90:1)node[fill=black]{};
\draw (90:0.7)node[nodelabel]{$b_2$};
\draw (180:1)node{};
\draw (270:1)node[fill=black]{};
\draw (0:1)node{};

\draw (180:1.75)node[fill=black]{};
\draw (0:1.75)node[fill=black]{};

\draw (90:2.5)node[fill=black]{};
\draw (90:2.8)node[nodelabel]{$b_1$};
\draw (180:2.5)node{};
\draw (270:2.5)node[fill=black]{};
\draw (0:2.5)node{};

\draw (270:1.4)node{};
\draw (270:2.1)node{};
\draw (260:1.75)node[fill=black]{};
\draw (275:1.75)node[nodelabel]{$\alpha$};

\draw (270:3.3)node[nodelabel]{$\Gamma_1^{a_2} \odot K_4$};

\end{tikzpicture}

\begin{tikzpicture}[scale=1.2]

\draw (90:2.5) -- (100:3);
\draw[dashed] (80:3) -- (90:2.5);

\draw (0:0) to [out=135,in=45] (180:1.75);
\draw (0:0) to [out=45, in=135] (0:1.75);
\draw[dashed] (0:0) to [out=225,in=135] (270:1.75);
\draw (250:1.4)node[nodelabel]{$\alpha$};

\draw (90:1) -- (180:1) -- (270:1) -- (0:1) -- (90:1);
\draw (180:2.5) -- (270:2.5);
\draw[dashed] (270:2.5) -- (0:2.5) -- (80:3) -- (100:3);
\draw (100:3) -- (180:2.5);

\draw (90:1) -- (90:2.5);
\draw (180:1) -- (180:1.75) -- (180:2.5);
\draw (270:1) -- (270:1.75) -- (270:2.5);
\draw (0:1) -- (0:1.75);
\draw[dashed] (0:1.75) -- (0:2.5);

\draw (0:0)node{};
\draw (330:0.3)node[nodelabel]{$a_1$};

\draw (90:1)node[fill=black]{};
\draw (90:0.7)node[nodelabel]{$b_2$};
\draw (180:1)node{};
\draw (270:1)node[fill=black]{};
\draw (0:1)node{};

\draw (180:1.75)node[fill=black]{};
\draw (270:1.75)node{};
\draw (280:1.75)node[nodelabel]{$a_2$};
\draw (0:1.75)node[fill=black]{};

\draw (90:2.5)node{};
\draw (80:3)node[fill=black]{};
\draw (100:3)node[fill=black]{};
\draw (90:3.2)node[nodelabel]{$\beta$};

\draw (180:2.5)node{};
\draw (270:2.5)node[fill=black]{};
\draw (0:2.5)node{};

\draw (270:3.3)node[nodelabel]{$\Gamma_1^{b_1} \odot K_4$};

\end{tikzpicture}
\hspace*{0.5in}
\begin{tikzpicture}[scale=1.2]

\draw[dashed] (85:3.2) -- (75:2.5);
\draw (85:2.75)node[nodelabel]{$\beta$};
\draw (75:2.5) -- (65:3.2) -- (85:3.2);
\draw (40:3.5) -- (65:3.2);

\draw (67.5:2) -- (40:3.5);
\draw (247.5:2) to [out=300,in=270] (0:2.8) to [out=90,in=270] (40:3.5);
\draw (40:3.5)node[fill=black]{};
\draw (40:3.8)node[nodelabel]{$b_1$};

\draw (157.5:2) to [out=90,in=180] (85:3.2);
\draw (337.5:2) to [out=45,in=330] (75:2.5);

\draw (85:3.2)node[fill=black]{};
\draw (75:2.5)node[fill=black]{};
\draw (65:3.2)node{};

\draw (22.5:2) -- (202.5:2);
\draw (112.5:2) -- (292.5:2);
\draw[dashed] (22.5:1) -- (112.5:1);
\draw (67.5:0.9)node[nodelabel]{$\alpha$};

\draw (22.5:1)node{};
\draw (10:1)node[nodelabel]{$a_1$};
\draw (112.5:1)node{};
\draw (125:1)node[nodelabel]{$a_2$};

\draw (337.5:2) -- (22.5:2) -- (67.5:2) -- (112.5:2) -- (157.5:2);
\draw[dashed] (157.5:2) -- (202.5:2);
\draw (202.5:2) -- (247.5:2) -- (292.5:2);
\draw[dashed] (292.5:2) -- (337.5:2);

\draw (22.5:2)node[fill=black]{};
\draw (67.5:2)node{};
\draw (112.5:2)node[fill=black]{};
\draw (157.5:2)node{};
\draw (202.5:2)node[fill=black]{};
\draw (247.5:2)node{};
\draw (292.5:2)node[fill=black]{};
\draw (337.5:2)node{};

\draw (270:3.3)node[nodelabel]{$\Gamma_2^{b_2} \odot K_4$};

\end{tikzpicture}
\vspace*{-0.4in}
\caption{Near-bipartite {\KFdec}s of Cubeplex and Twinplex of order $14$}
\label{fig:K4-decorations-of-Cubeplex-and-Twinplex}
\end{figure}

The following is a useful consequence of Lemma~\ref{lem:near-bipartite-K4-decoration} which will help us in
easily identifying the \NB\ {\KFdec}s of $\{K_{3,3}, \Gamma_1, \Gamma_2\}$.

\begin{cor}
\label{cor:near-bipartite-K4-decoration}
Let $J$ denote a $3$-connected cubic \NB\ graph, let $v \in V(J)$, and let $G:= J^v \odot K_4$.
Then $G$ is \NB\ if and only if $v \in V(R)$ for some removable doubleton~$R$ of~$J$.
Furthermore, if $J$ has a unique removable doubleton, say~$R:=\{\alpha,\beta\}$, and if
$v$ is an end of $\alpha$, then $G$ has a unique removable doubleton containing $\beta$.
\qed
\end{cor}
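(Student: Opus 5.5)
The plan is to work with the triangle $X:=\{x,y,z\}$ of $G=J^v\odot K_4$ that replaces~$v$, so that $G/X$ is exactly~$J$ with the contraction vertex playing the role of~$v$. Both directions, and the uniqueness claim, then come from Lemma~\ref{lem:near-bipartite-K4-decoration} and Proposition~\ref{prp:3conn-cubic-mcg-bip-index-at-most-two}. First I will record a routine fact: replacing a vertex of a $3$-connected cubic graph by a triangle yields a $3$-connected cubic graph. In particular, $G$ is $3$-connected, cubic and nonbipartite. Since $G$ has minimum degree three, $G$ is \NB\ if and only if it has a removable doubleton.

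For the reverse implication, suppose $v$ is an end of~$\alpha$, where $R:=\{\alpha,\beta\}$ is a removable doubleton of~$J$. Let $A$ and $B$ be the color classes of $J-R$ with $\alpha$ inside~$A$; thus $v\in A$. Let $x$ be the triangle vertex incident with~$\alpha$, and let $y,z$ be the triangle vertices incident with the other two edges at~$v$, whose far ends lie in~$B$. I will 2-color $G-\{yz,\beta\}$ as follows: keep the coloring of $J-v$, put $y,z$ in~$A$, and put $x$ in~$B$. Every edge other than $yz$ and $\beta$ then joins the two classes, so ${\sf bi}(G)\le 2$. Since $G$ is $3$-connected, cubic and nonbipartite, Proposition~\ref{prp:3conn-cubic-mcg-bip-index-at-most-two}(ii) shows that $\{yz,\beta\}$ is a removable doubleton of~$G$. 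Hence $G$ is \NB.

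For the forward implication, let $R''$ be a removable doubleton of~$G$ and apply Lemma~\ref{lem:near-bipartite-K4-decoration} to the triangle~$X$. The lemma says exactly one edge of $R''$, call it $\alpha''$, lies in $G[X]$, and it defines $\alpha'\in\partial(X)$ as the cut edge not adjacent to~$\alpha''$; let $\beta''$ be the other edge of~$R''$. The contraction $G/X$ is~$J$, which is nonbipartite, so case~(i) of the lemma is excluded. Because $J$ is $3$-connected, case~(ii) gives that $\{\alpha',\beta''\}$ is a removable doubleton of~$J$. As $\alpha'\in\partial(X)$, in $J$ this edge is incident with~$v$, so $v\in V(R)$ for some removable doubleton~$R$ of~$J$.

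For the uniqueness statement, assume $R=\{\alpha,\beta\}$ is the unique removable doubleton of~$J$ and $v$ is an end of~$\alpha$. Existence of a doubleton containing~$\beta$, namely $\{yz,\beta\}$, comes from the reverse implication. Now let $R''$ be any removable doubleton of~$G$. By the argument just given, $\{\alpha',\beta''\}=R$, with $\alpha'$ incident with~$v$. Since $\beta$ has both ends in~$B$ while $v\in A$, $\beta$ is not incident with~$v$. Hence $\alpha'=\alpha$ and $\beta''=\beta$. Then $\alpha''$ is the triangle edge not adjacent to~$\alpha$, which is~$yz$, so $R''=\{yz,\beta\}$. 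The only step needing care is this identification of the contraction $G/X$ with~$J$, including the cut edge~$\alpha'$ with an edge at~$v$; the rest is direct application of the cited results.
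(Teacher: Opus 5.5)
Your proof is correct and is essentially the argument the paper has in mind. The paper omits the proof and calls the statement a consequence of Lemma~\ref{lem:near-bipartite-K4-decoration}: as you do, the forward direction and the uniqueness clause come from applying that lemma to the triangle replacing $v$, with case (i) excluded because $J$ is nonbipartite. The converse is not covered by that lemma, and you handle it correctly by exhibiting the bipartition of $G-\{yz,\beta\}$ and invoking Proposition~\ref{prp:3conn-cubic-mcg-bip-index-at-most-two} together with the $3$-connectivity of $G$ (Lemma~\ref{lem:K4-decoration-3-connected-cubic-graph}).
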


We let $\mathcal{N}$ denote the set comprising five \NB\ graphs:
$K_{3,3} \odot K_4$ (shown in Figure~\ref{fig:solid-K4-decorations-of-K33}),
$K_{3,3}^{a_2,b_2} \odot K_4$ (shown in Figure~\ref{fig:smallest-nonsolid-K4-decoration-of-K33}),
$\Gamma_1$ and $\Gamma_2$ (shown in Figure~\ref{fig:Cubeplex-and-Twinplex}), and
$\Gamma_1^{a_1} \odot K_4$ (shown in Figure~\ref{fig:K4-decorations-of-Cubeplex-and-Twinplex}).

\begin{prp}
\label{prp:near-bipartite-K4-decorations-of-K33-Cubeplex-Twinplex}
Each \NB\ graph in $K_4(\{K_{3,3}, \Gamma_1, \Gamma_2\})$ either belongs to $\mathcal{N}$
or is otherwise \KTTB.
\end{prp}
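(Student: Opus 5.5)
We argue by induction on $|T|$, where the graph in question is $L:=H^T \odot K_4$ with $H \in \{K_{3,3},\Gamma_1,\Gamma_2\}$ and $T \subseteq V(H)$. The key tool is Lemma~\ref{lem:near-bipartite-K4-decoration}, which lets us strip off one triangle at a time from a \NB\ cubic graph. Two facts are needed throughout. First, every \KFdec\ of a $3$-connected cubic graph is $3$-connected, since replacing a vertex by a triangle preserves $3$-connectivity. Second, a \KFdec\ of a \KTTB\ graph is again \KTTB: if $K_{3,3}$ is a \comi\ of $L_0$ and $L$ is obtained from $L_0$ by splicing with $K_4$ at some vertex, then $K_{3,3}$ is a \comi\ of $L$. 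This follows from Corollary~\ref{cor:splicing-at-good-vertex} in the cases where the splicing vertex is absent from, or has degree two in, the bi-subdivision. When it is cubic there, we use the last case of the proof of Theorem~\ref{thm:K4-decoration-contrapositive}: $H_1 \cup H_2$, with $H_2$ a bi-subdivision of $K_4$ through the triangle, gives a bi-subdivision of $K_{3,3}\odot K_4$ or the like. More simply, $L$ contains $L_0$ with that vertex replaced by a triangle, and $K_{3,3}^{\{v\}}\odot K_4$ itself contains $K_{3,3}$ as a \comi. I would check the last claim directly: delete the triangle edge opposite the original edge that is to be kept, so that two triangle vertices become degree two.

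Now suppose $L$ is \NB\ and $T \neq \emptyset$. Pick $v \in T$ and let $X$ be its triangle. By Lemma~\ref{lem:near-bipartite-K4-decoration}, $L/X = H^{T-v}\odot K_4$ is either bipartite or \NB, since it is $3$-connected. The bipartite outcome is possible only when $L/X$ is bipartite, which forces $T=\{v\}$ and $H=K_{3,3}$; then $L=K_{3,3}\odot K_4 \in \mathcal{N}$. Otherwise $L':=L/X$ is \NB\ with the specified doubleton $\{\alpha',\beta\}$, and by induction $L'$ is either \KTTB\ or in $\mathcal{N}$. If it is \KTTB, then $L$ is \KTTB\ by the second fact above. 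So it remains to examine the one-step \KFdec s of the members of $\mathcal{N}$ and of $H$ itself.

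These remaining cases are handled by Corollary~\ref{cor:near-bipartite-K4-decoration} together with Proposition~\ref{prp:automorphisms-of-Cubeplex-and-Twinplex}, which determine which single decorations stay \NB.

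\begin{enumerate}[(i)]
\item For $\Gamma_1$, the \NB\ single decorations are those at $a_1$, $a_2$ or $b_1\sim b_2$.
\item For $\Gamma_2$, the only one is at $b_2$, up to symmetry.
\end{enumerate}

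The graph $\Gamma_1^{a_1}\odot K_4$ lies in $\mathcal{N}$. The others, namely $\Gamma_1^{a_2}$, $\Gamma_1^{b_1}$ and $\Gamma_2^{b_2}$, must be shown \KTTB\ by exhibiting a bi-subdivision of $K_{3,3}$ whose complement is matchable; the dashed edges in Figure~\ref{fig:K4-decorations-of-Cubeplex-and-Twinplex} indicate the deletions. For $K_{3,3}\odot K_4$, the further \NB\ decorations are at ends of its unique doubleton, by Corollary~\ref{cor:near-bipartite-K4-decoration}. By symmetry these reduce to $K_{3,3}^{\{a_2,b_2\}}\odot K_4 \in \mathcal{N}$, or to the decoration at the other end of the triangle edge. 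The latter is not cubic-original and must be checked separately.

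Finally, we handle the decorations of $K_{3,3}^{\{a_2,b_2\}}\odot K_4$ and of $\Gamma_1^{a_1}\odot K_4$ at ends of their doubletons. By Corollary~\ref{cor:near-bipartite-K4-decoration} the doubleton is unique and contains $\beta$, which limits the candidates, and each such graph must be shown \KTTB.

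The main obstacle is this final explicit verification: producing concrete conformal bi-subdivisions of $K_{3,3}$ in a handful of order-$14$ and order-$16$ graphs, and confirming the uniqueness of doubletons so that the case list is complete. I would do this by inspecting figures. For each graph, I would delete a perfect matching's worth of edges around one triangle, or around the doubleton, and read off $K_{3,3}$.
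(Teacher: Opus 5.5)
There is a genuine gap. Your inductive step depends on the ``second fact'' that every \KFdec\ of a \KTTB\ graph is again \KTTB, and that fact is false. $K_{3,3}$ itself is a counterexample: $K_{3,3}\odot K_4$ is \KTTF. The paper states this for Figure~\ref{fig:cubic-brick-tight-cut-reduction}a, and the graph belongs to $\mathcal{N}$, whose members are supposed to be \KTTF. You can also see it directly. In this simple cubic graph of order eight, a conformal \bisub\ of $K_{3,3}$ would have to be either a $K_{3,3}$ subgraph on six vertices, which would cut off the remaining two vertices, or $G-e$ for an edge $e$ whose ends stay adjacent. Neither exists.

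Your direct check does not produce a \bisub. After deleting the triangle edge $x_2x_3$, the vertices $x_2$ and $x_3$ lie on paths of length two from $x_1$ to $b_2$ and $b_3$. That is a subdivision of $K_{3,3}$ with even paths, which is exactly the difference between {\Smi}s and {\comi}s. Likewise, the last case in the proof of Theorem~\ref{thm:K4-decoration-contrapositive} yields only a \bisub\ of some member of $K_4(K_{3,3})$, not of $K_{3,3}$. So the case ``$L/X$ is \KTTB'', which is the heart of the proposition, is not handled.

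What does transfer is weaker. If a graph $L_0$ has a conformal \bisub\ $H$ of $K_{3,3}$ in which the vertex $w$ to be decorated is absent or has degree two, then $L_0^{w}\odot K_4$ is \KTTB\ by Corollary~\ref{cor:splicing-at-good-vertex}. A \NB\ one-step decoration must occur at an end of a removable doubleton (Corollary~\ref{cor:near-bipartite-K4-decoration}). So you need an $H$ with no cubic vertex in $V(R)$. This is what the paper checks, via the dashed edges, for $\Gamma_1^{a_2}\odot K_4$, $\Gamma_1^{b_1}\odot K_4$ and $\Gamma_2^{b_2}\odot K_4$. Your handling of the single decorations of $\Gamma_1$ and $\Gamma_2$ matches the paper's $|T|=1$ case.

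You also miss the observation that makes the analysis finite. The $|T|$ triangles are vertex-disjoint odd cycles, so $|T|\le{\sf bi}(G)=2$, and no induction is needed.
\begin{enumerate}[(i)]
\item For $J=K_{3,3}$ only $K_{3,3}\odot K_4$ and $K_{3,3}^{\{a_2,b_2\}}\odot K_4$ occur. The further decorations of the latter that you planned to check cannot be \NB\ at all.
\item For $J\in\{\Gamma_1,\Gamma_2\}$ with $|T|=2$, the graph is a decoration of one of the three \KTTB\ graphs above at a vertex of its doubleton. That case is handled by the good $H$.
\end{enumerate}
Without this bound, each step of your induction would need a freshly verified good $H$, and you have no mechanism for propagating one.

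Two smaller slips:
\begin{enumerate}[(i)]
\item $K_{3,3}\odot K_4$ has three removable doubletons, not one. So the uniqueness clause of Corollary~\ref{cor:near-bipartite-K4-decoration} does not apply to it or to $K_{3,3}^{\{a_2,b_2\}}\odot K_4$.
\item Decorating at a triangle vertex does not give a member of $K_4(K_{3,3})$, since $T\subseteq V(J)$. That ``separate check'' is therefore unnecessary.
\end{enumerate}
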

\begin{proof}
We omit proof of the fact that each graph in $\mathcal{N}$ is \KTTF. It may be verified computationally,
or otherwise.

\vfill

Let $J \in \{K_{3,3}, \Gamma_1, \Gamma_2\}$ and let $G:=J^T \odot K_4$, for some $T \subseteq V(J)$,
so that $G$ is \NB.
In particular, ${\sf bi}(G)=2$; this immediately implies that $|T| \leq 2$. If $T = \emptyset$ then
the desired conclusion holds.
Henceforth, we consider $|T| \geq 1$.

First suppose that $J = K_{3,3}$. If $|T|=1$ then $G = K_{3,3} \odot K_4$;
see Figure~\ref{fig:solid-K4-decorations-of-K33}.
The reader may easily verify that $K_{3,3} \odot K_4$ has three removable doubletons --- each comprising
one edge from the unique $C_3$, and another from the corresponding cut $\partial(V(C_3))$. Consequently,
by Corollary~\ref{cor:near-bipartite-K4-decoration},
if $|T|=2$ then $G$ is obtained by splicing $K_4$ with $K_{3,3} \odot K_4$ at one of the three vertices
which is at distance one from the unique $C_3$.
It is easily verified that these three vertices belong to the same orbit of ${\sf Aut}(K_{3,3} \odot K_4)$,
and that $G = K_{3,3}^{a_2,b_2} \odot K_4$
(as shown in Figure~\ref{fig:smallest-nonsolid-K4-decoration-of-K33}).
In each case, $G \in \mathcal{N}$.

Now suppose that $J \in \{ \Gamma_1, \Gamma_2\}$. Since each of $\Gamma_1$ and $\Gamma_2$
has a unique removable doubleton (as indicated in Figure~\ref{fig:Cubeplex-and-Twinplex}),
we invoke Corollary~\ref{cor:near-bipartite-K4-decoration} ($|T|$ times)
to infer that $T \subseteq \{a_1,a_2,b_1,b_2\}$
and that $G$ has a unique removable doubleton, say~$R$;
furthermore, if $|T|=2$ then one of its vertices belongs to $\{a_1,a_2\}$ and the other belongs to $\{b_1,b_2\}$.

If $|T|=1$, it follows from Proposition~\ref{prp:automorphisms-of-Cubeplex-and-Twinplex}
that $G$ is one of the four graphs shown in Figure~\ref{fig:K4-decorations-of-Cubeplex-and-Twinplex}.
Observe that each of these graphs --- except for $\Gamma_1^{a_1} \odot K_4$ ---
has a conformal subgraph~$H$ that is a bi-subdivision of $K_{3,3}$
such that no vertex in $V(R)$ is a cubic vertex of~$H$. (To see such an $H$, delete the dashed edges
in the figure, and delete the resulting isolated vertices --- if any.) The desired conclusion holds.

If $|T|=2$, then $G$ may be easily viewed as a \KFdec\ of one of the three \KTTB\ graphs
shown in Figure~\ref{fig:K4-decorations-of-Cubeplex-and-Twinplex}. Using the observation stated above
(regarding the subgraph~$H$) and Corollary~\ref{cor:splicing-at-good-vertex},
we deduce that $G$ is \KTTB.

This completes the proof of Propostion~\ref{prp:near-bipartite-K4-decorations-of-K33-Cubeplex-Twinplex}.
\end{proof}

We need one last fact that is easily proved using Menger's Theorem.

\begin{lem}
\label{lem:K4-decoration-3-connected-cubic-graph}
Each \KFdec\ of a $3$-connected cubic graph is $3$-connected and cubic. \qed
\end{lem}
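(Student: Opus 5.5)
Since a \KFdec\ of~$J$ is obtained from~$J$ by replacing the vertices of some set~$T$ by triangles one at a time, I would induct on~$|T|$. It therefore suffices to show the single-step statement: if $J$ is a $3$-connected cubic graph and $v \in V(J)$, then $G := J^{v} \odot K_4$ is $3$-connected and cubic. Applying this repeatedly gives the lemma, because the intermediate graphs are again $3$-connected and cubic.

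Cubicity is immediate. The three triangle vertices $t_1,t_2,t_3$ replacing~$v$ each have two triangle neighbours and one edge of the former $\partial(v)$, and every other vertex keeps its degree.

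For $3$-connectivity I would use Menger's Theorem in its fan/path form. It suffices to show that for any two distinct vertices $p,q$ of~$G$ there are three internally disjoint $pq$-paths. (Equivalently, no set $S$ of at most two vertices disconnects~$G$; note that $|V(G)| \geq 6$.) I would split into cases according to how many of $p,q$ lie in the triangle $\{t_1,t_2,t_3\}$.

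\emph{Neither $p$ nor $q$ is a triangle vertex.} Take three internally disjoint $pq$-paths in~$J$, which exist because $J$ is $3$-connected. At most one of them passes through~$v$, entering and leaving via two edges of~$\partial(v)$, say those attached to $t_i$ and $t_j$. Reroute that path through the edge $t_it_j$ of the triangle. The paths stay internally disjoint.

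\emph{Exactly one of them, say $p=t_1$, is a triangle vertex.} Take three internally disjoint $vq$-paths in~$J$; they leave~$v$ along the three distinct edges of~$\partial(v)$. In~$G$ they become paths starting at $t_1$, $t_2$, $t_3$ respectively. Prefix the second with $t_1t_2$ and the third with $t_1t_3$. Internal disjointness holds because each $t_i$ is used by only one path.

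\emph{Both are triangle vertices.} The direct edge $t_1t_2$ and the path $t_1t_3t_2$ give two paths. For a third, I need a path from the outer neighbour $u_1$ of~$t_1$ to the outer neighbour $u_2$ of~$t_2$ in $J - v$. This exists since $J-v$ is connected ($J$ being $3$-connected). If $u_1=u_2$ the path is trivial; with parallel edges this degenerate case is harmless.

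The only mildly delicate point is the bookkeeping in the first case, namely ensuring that the rerouting uses only triangle edges not used elsewhere. This is clear because just one path meets the triangle. Multigraph subtleties can be handled by phrasing $3$-connectivity via vertex cuts; if~$J$ is $\Theta$, whether it counts as $3$-connected is a convention issue, and the argument still applies edge-wise.
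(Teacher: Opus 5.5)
Your proposal is correct and is essentially the argument the paper has in mind. The paper gives no details beyond saying the lemma is easily proved using Menger's Theorem, and your reduction to a single vertex-to-triangle replacement, followed by the three-case construction of internally disjoint paths (using that at most one $pq$-path in $J$ meets $v$, and that three internally disjoint $vq$-paths use all three edges of $\partial(v)$), supplies exactly those details correctly.
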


We are now ready to deduce (from Theorem~\ref{thm:near-bipartite-pfaffian-forbidden-S-minors})
the characterization of \pfaff\ \NB\ graphs
in terms of forbidden conformal minors due to Fischer and Little~\cite{fili01}.

\begin{thm}
\label{thm:near-bipartite-pfaffian-forbidden-conformal-minors}
A near-bipartite graph~$G$ is \pfaff\ if and only if $G$ is $(\{K_{3,3}\} \cup \mathcal{N}$)-free.
\end{thm}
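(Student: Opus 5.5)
The plan is to combine the Main Theorem (\ref{thm:conformal-minors-vs-S-minors-cubic-long-version}) with Theorem~\ref{thm:near-bipartite-pfaffian-forbidden-S-minors}, and then prune the resulting list $K_4(\{K_{3,3},\Gamma_1,\Gamma_2\})$ down to $\{K_{3,3}\}\cup\mathcal{N}$. I read Theorem~\ref{thm:near-bipartite-pfaffian-forbidden-S-minors} in its intended form: a \NB\ graph is \pfaff\ if and only if \emph{none} of $K_{3,3},\Gamma_1,\Gamma_2$ is an \Smi\ of it. Each of these three graphs is a $2$-connected cubic graph. Hence, taking $\mathcal{H}:=\{K_{3,3},\Gamma_1,\Gamma_2\}$, the Main Theorem shows that a \NB\ graph~$G$ is \pfaff\ if and only if $G$ is $K_4(\mathcal{H})$-free. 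It therefore remains to show that, for a \NB\ graph~$G$, being $K_4(\mathcal{H})$-free is equivalent to being $(\{K_{3,3}\}\cup\mathcal{N})$-free.

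One direction is immediate. Every graph in $\{K_{3,3}\}\cup\mathcal{N}$ is a \KFdec\ of $K_{3,3}$, $\Gamma_1$ or $\Gamma_2$ (the graphs $\Gamma_1,\Gamma_2$ and $K_{3,3}$ correspond to $T=\emptyset$), so $\{K_{3,3}\}\cup\mathcal{N}\subseteq K_4(\mathcal{H})$. Consequently $K_4(\mathcal{H})$-freeness implies $(\{K_{3,3}\}\cup\mathcal{N})$-freeness.

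For the converse, suppose that some $L\in K_4(\mathcal{H})$ is a \comi\ of the \NB\ graph~$G$. The steps are as follows.
\begin{enumerate}[(1)]
\item Each of $K_{3,3},\Gamma_1,\Gamma_2$ is $3$-connected; I take this as a routine check, noting that $\Gamma_1,\Gamma_2$ are bricks and hence $3$-connected by Theorem~\ref{thm:elp-brick-characterization}. By Lemma~\ref{lem:K4-decoration-3-connected-cubic-graph}, $L$ is therefore $3$-connected and cubic.
\item By Corollary~\ref{cor:3-connected-cubic-conformal-minors-near-bipartite}, $L$ is either bipartite or \NB.
\item Suppose $L$ is bipartite. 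Any decoration with $T\neq\emptyset$ contains a triangle, and $\Gamma_1,\Gamma_2$ are nonbipartite. Hence $L=K_{3,3}$, and $G$ is \KTTB.
\item Suppose $L$ is \NB. By Proposition~\ref{prp:near-bipartite-K4-decorations-of-K33-Cubeplex-Twinplex}, either $L\in\mathcal{N}$, in which case we are done, or $L$ is \KTTB.
\item In the last case, $K_{3,3}$ is a \comi\ of $L$ and $L$ is a \comi\ of $G$. I use transitivity of \comi s to conclude that $G$ is \KTTB.
\end{enumerate}

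The transitivity needed in step~(5) follows from the operational characterization of \comi s, namely deletion of removable ears together with restricted bi-contractions. Two such sequences of operations can simply be concatenated, so a \comi\ of a \comi\ of~$G$ is itself a \comi\ of~$G$.

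I expect no real obstacle here; the substantive work sits in the cited results, especially Proposition~\ref{prp:near-bipartite-K4-decorations-of-K33-Cubeplex-Twinplex}. The only points needing care are the $3$-connectivity of $\Gamma_1$ and $\Gamma_2$, and the wording correction of Theorem~\ref{thm:near-bipartite-pfaffian-forbidden-S-minors}.
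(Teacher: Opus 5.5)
Your proposal is correct and follows the paper's own argument. You combine Theorem~\ref{thm:near-bipartite-pfaffian-forbidden-S-minors}, read with the intended ``none of'', with the Main Theorem, and then prune $K_4(\{K_{3,3},\Gamma_1,\Gamma_2\})$ down to $\{K_{3,3}\}\cup\mathcal{N}$ via Lemma~\ref{lem:K4-decoration-3-connected-cubic-graph}, Corollary~\ref{cor:3-connected-cubic-conformal-minors-near-bipartite} and Proposition~\ref{prp:near-bipartite-K4-decorations-of-K33-Cubeplex-Twinplex}. Your explicit handling of the points the paper leaves implicit is sound: the bipartite case forcing $L=K_{3,3}$, $3$-connectivity of $\Gamma_1,\Gamma_2$ via their being bricks, and transitivity of conformal minors.
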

\begin{proof}
It follows immediately from Theorem~\ref{thm:near-bipartite-pfaffian-forbidden-S-minors} and
our Main Theorem (\ref{thm:conformal-minors-vs-S-minors-cubic-long-version})
that a \NB~$G$ is \pfaff\ if and only if $G$ is $K_4(\{K_{3,3}, \Gamma_1, \Gamma_2\})$-free.
By Lemma~\ref{lem:K4-decoration-3-connected-cubic-graph}, each of the graphs
in $K_4(\{K_{3,3}, \Gamma_1, \Gamma_2\})$ is $3$-connected and cubic.
By invoking Corollary~\ref{cor:3-connected-cubic-conformal-minors-near-bipartite}
and Proposition~\ref{prp:near-bipartite-K4-decorations-of-K33-Cubeplex-Twinplex},
we conclude that a \NB\ graph~$G$ is \pfaff\ if and only if $G$ is $\{K_{3,3}\} \cup \mathcal{N}$-free.
\end{proof}

In their paper, Fischer and Little \cite{fili01} included seven graphs in their list of forbidden conformal minors.
However, one of them --- namely, $\Gamma_{1,1}$ in their paper which is the same
as~$\Gamma_1^{b_1} \odot K_4$
shown in Figure~\ref{fig:K4-decorations-of-Cubeplex-and-Twinplex} --- is redundant (since it is \KTTB).
One may verify using computations or otherwise that the list of six forbidden conformal minors
(in Theorem~\ref{thm:near-bipartite-pfaffian-forbidden-conformal-minors}) is best possible
since none of them contains another as a conformal minor; we omit proof of this fact.

\bigskip
\noindent
{\bf Acknowledgements}: The first author would like to thank Zolt{\'a}n Szigeti for the many
stimulating discussions
during the latter's visit to the University of Waterloo (in 2014).
During one such discussion, Zolt{\'a}n mentioned the weaker
version of Theorem~\ref{thm:claw-theta-K4-comi} (proved in \cite{abss11}),
and suggested that one should be able to prove
the stronger version easily.

\bibliographystyle{alpha}
\bibliography{clm}

\end{document}